\newcommand{\fg}{\mathfrak g}
\newcommand{\ZZ}{\mathbb{Z}}
\newcommand\vect[2]{#1_1,\,\ldots,\, #1_{#2}}
\newcommand{\dI}{{}^{\delta}\!I}
\newcommand{\dL}{{}^{\delta}\!{{\mathcal L}}}
\newcommand{\gJ}{{}^{\gamma}\!J}
\DeclareMathOperator{\Ann}{Ann}
\DeclareMathOperator{\Char}{char}
\DeclareMathOperator{\Hom}{Hom}
\DeclareMathOperator{\Hf}{Hilb}
\DeclareMathOperator{\Ker}{Ker}
\DeclareMathOperator{\rank}{rank}
\DeclareMathOperator{\Span}{Span}
\DeclareMathOperator{\lpp}{\mathcal{L}}    
\newcommand{\ci}{\mathfrak{f}}    
\DeclareMathOperator{\sec2}{\underline{\mathbf{2}}}  
\DeclareMathOperator{\seca}{\underline{\mathbf{a}}}  
\DeclareMathOperator{\EGH}{EGH} 
\newtheorem{theorem}{Theorem}[section]
\newtheorem{lemma}[theorem]{Lemma}
\newtheorem{proposition}[theorem]{Proposition}
\newtheorem{corollary}[theorem]{Corollary}
\newtheorem{conjecture}[theorem]{Conjecture}
\newtheorem{main-conjecture}[theorem]{Main Conjecture}
\newtheorem{claim}[theorem]{Claim}
\newtheorem{lem-def}[theorem]{Lemma and Definition}
\newtheorem{prop-def}[theorem]{Proposition and Definition}
\newtheorem{observe/question}[theorem]{Observation/Question}
\theoremstyle{definition}
\newtheorem{definition}[theorem]{Definition} 
\newtheorem{remark}[theorem]{Remark}
\newtheorem{notation}[theorem]{Notation}
\newtheorem{rem-def}[theorem]{Remark and Definition}
\newtheorem{example}[theorem]{Example}
\newtheorem{mquestion}[theorem]{Main Question}
\newcommand{\twodigit}[1]{\INTVAL=#1\relax\ifnum\INTVAL<10 0\fi\the\INTVAL}
\newcommand\rightnow{
            \twodigit{\number\day}.\space
            \ifcase\month\or January\or February\or March\or April\or
May\or June\or July\or August\or September\or October\or November\or
December\fi
            \space\number\year}
\title[The $\EGH$ conjecture for defect two quadratic ideals]{The Eisenbud-Green-Harris Conjecture for \\ Defect Two Quadratic Ideals} 
\author{Sema G\"unt\"urk\"un}
\address{Department of Mathematics, University of Michigan, 
530 Church Street East Hall, MI, 48109  \\
(Current Address: Department of Mathematics, University of Connecticut, 341 Mansfield Road, CT, 06269)}
\email{gunturku@umich.edu}
\author{Melvin Hochster}
\address{Department of Mathematics, University of Michigan, 
530 Church Street East Hall, MI, 48109}
\email{hochster@umich.edu}
\subjclass[2010]{13D40, 13A02, 13A15}
\keywords{Defect two ideal, Hilbert function, lexicographic ideal, lex-plus-powers ideal, quadratic ideal, regular sequence}
\begin{document}



\begin{abstract} The Eisenbud-Green-Harris (EGH) conjecture states that a homogeneous ideal in a polynomial ring $K[x_1,\,\ldots,\,x_n]$ over a field $K$ that contains a regular sequence $f_1,\,\ldots,\, f_n$ with degrees $a_i$, $i=1,\,\ldots,\,n$ has the same Hilbert function as a lex-plus-powers ideal containing the powers $x_i^{a_i}$, $i=1,\,\ldots,\,n$.  In this paper, we discuss a case of the EGH conjecture for homogeneous ideals generated by $n+2$ quadrics containing a regular sequence 
$f_1,\, \ldots, \, f_n$ and give a complete proof for EGH when $n=5$ and $a_1=\cdots=a_5=2$.\end{abstract}

\maketitle 

\section{Introduction}  
Let $R = K[x_1,\,\ldots,\,x_n]$ be the polynomial ring in $n$ variables over a field $K$ with the homogeneous 
lexicographic order  in which $x_1> \cdots > x_n$ and with the standard grading $R= \bigoplus\limits_{i\geq 0} R_i$.
We denote the Hilbert function of a $\ZZ$-graded $R$-module $M$
by $\Hf_{M}(i) := \dim_K M_i$, where $M_i$ is the homogeneous component of $M$ in degree $i$. When $I$ is
a homogeneous ideal of $R$ and $M$ is $R$, or $I$,  or  $R/I$,  the Hilbert function has value 0 when $i <0$.  When the 
Hilbert function of $M$ is 0 in negative degree, we may discuss the Hilbert function of $M$ by giving the sequence
of its values, and we refer to this sequence of integers as the \textit{O-sequence} of $M$. 

In 1927, Macaulay \cite{Ma} showed that the Hilbert function of any homogeneous ideal of $R$ is attained by a lexicographic ideal in $R$. Later, in Kruskal-Katona's theorem \cite{Ka,Kr}, it is shown that the polynomial ring $R$ in Macaulay's result can be replaced with the quotient $R/(x_1^2,\,\ldots,\,x_n^2)$.  After this result, Clement and Lindstr\"om, in \cite{CL}, generalized the result to $R/(x_1^{a_1},\,\ldots,\,x_n^{a_n})$ if $a_1 \leq \cdots \leq a_n < \infty$.  

In \cite{EGH} Eisenbud, Green and Harris conjectured a generalization of the Clement-Lindstr\"om result.  
Let ${\seca}=(a_1,\,\ldots,\,a_n) \in \mathbb{N}^n$, where $2\leq a_1 \leq \ldots \leq a_n$. 

\begin{conjecture}[Eisenbud-Green-Harris ($\EGH_{\seca,n}$) Conjecture \cite{EGH}] If $I$ is a homogeneous ideal in $R=K[x_1,\,\ldots,\,x_n]$ containing a regular sequence $f_1,\, f_2,\,\ldots,\,f_n$ with degrees 
$\deg f_i = a_i$, then there is a monomial ideal $\lpp = (x_1^{a_1}, \,\ldots,\,x_n^{a_n})+J$, where $J$ is a lexicographic ideal in $R$, such that $R/\lpp$ and $R/I$ 
have the same Hilbert function.
\end{conjecture}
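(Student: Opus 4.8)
The plan is to attack $\EGH_{\seca,n}$ through a sequence of reductions, the first two of which rest on classical machinery and the last of which is the genuine obstacle. First I would replace $I$ by its generic initial ideal $\operatorname{gin}(I)$ with respect to the lexicographic order: this is a Borel-fixed monomial ideal with the same Hilbert function as $I$, so by Macaulay's theorem it suffices to carry out the lex-plus-powers comparison for this monomial datum. A subtle point already arises here, namely that one must verify $\operatorname{gin}(I)$ still contains a regular sequence in degrees $\seca$. This is \emph{not} automatic, since the initial forms of a regular sequence need not remain regular; it is part of what must be justified, typically by choosing coordinates and term order so that the flat limit of $(f_1,\dots,f_n)$ remains a complete intersection of the same degrees (equivalently, so that the colength Hilbert series $\prod_i(1-t^{a_i})/(1-t)^n$ is preserved).

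Second, I would dispose of the case in which the regular sequence is the monomial one, $f_i = x_i^{a_i}$. Here $I \supseteq (x_1^{a_1},\dots,x_n^{a_n})$, so $R/I$ is a cyclic module over $B := R/(x_1^{a_1},\dots,x_n^{a_n})$, and the Clements--Lindstr\"om theorem asserts precisely that every ideal of $B$ has the same Hilbert function as a lex ideal of $B$. Pulling such a lex ideal back to $R$ and adjoining the powers produces the desired lex-plus-powers ideal $\lpp$, settling $\EGH_{\seca,n}$ for the pure-power complete intersection. Thus the entire difficulty is concentrated in comparing an arbitrary complete intersection of degrees $\seca$ with this distinguished monomial one.

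The remaining and decisive step is to reduce an arbitrary regular sequence $f_1,\dots,f_n$ of degrees $\seca$ to the monomial one. The natural device is a flat degeneration: construct a one-parameter family $f_i(t)$ of regular sequences with $f_i(1)=f_i$ and $f_i(0)=x_i^{a_i}$, together with a flat family of ideals $I(t)\supseteq(f_1(t),\dots,f_n(t))$ specializing $I$, and argue that $\Hf_{R/I(t)}$ is constant along the family. If this could be arranged, the special fiber would land in the Clements--Lindstr\"om case already handled, and semicontinuity of Hilbert functions would force $\Hf_{R/I}$ to agree with that of a lex-plus-powers ideal.

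I expect this last step to be the main obstacle, and in fact it is exactly where $\EGH_{\seca,n}$ remains open in general: there is no known mechanism that degenerates an arbitrary complete intersection of prescribed degrees to the pure-power one while holding the Hilbert function of the \emph{containing} ideal fixed, and along a naive path the Hilbert function can jump. The viable concrete route, which is the one I would actually pursue, is to bypass the full degeneration and instead bound the growth of $\Hf_{R/I}$ directly, degree by degree: one uses the forms $f_i$ as multipliers and a Gaussian-elimination argument to show that in each degree $d$ the quadric (or degree-$a_i$) relations impose the same Macaulay-type growth restriction that holds in $B$. Verifying that the EGH growth bound is respected in every degree, for the specific $\seca$ and $n$ at hand, is the combinatorial heart of the matter and is precisely what must be carried out by hand in the cases where the conjecture is presently accessible.
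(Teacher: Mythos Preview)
The statement you are addressing is the Eisenbud--Green--Harris conjecture itself, which the paper records as Conjecture~1.1 and does \emph{not} prove in general; it remains open. The paper establishes only special cases: $\EGH_{\sec2,n}(2)$ for defect two quadratic ideals (Theorem~\ref{thm-defect2-egh(2)}) and the full $\EGH_{\sec2,5}$ (Theorem~\ref{thm-EGH-n=5}). Your proposal is therefore not competing with a proof in the paper but with an open problem, and you yourself concede that the decisive step fails.

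Two of the three reductions you sketch have genuine gaps. First, passing to $\operatorname{gin}(I)$ does \emph{not} reduce the conjecture to the monomial-regular-sequence case: there is no known way to guarantee that $\operatorname{gin}(I)$ contains a regular sequence of degrees $\seca$, and your suggestion of ``choosing coordinates and term order'' so that the flat limit of $(f_1,\dots,f_n)$ remains a complete intersection is exactly the unresolved issue, not a technique. Second, the flat degeneration of $(f_1,\dots,f_n)$ to $(x_1^{a_1},\dots,x_n^{a_n})$ while carrying $I$ along with constant Hilbert function is, as you acknowledge, not available; semicontinuity goes the wrong way for what you need. Only your middle step (Clements--Lindstr\"om handles the pure-power case) is solid, and that was already known before the conjecture was posed.

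Your final paragraph pivots to the strategy the paper actually uses in its special cases: bound $\Hf_{R/I}$ degree by degree via direct dimension counts. That is indeed the viable route, but the paper's arguments are far more specific than ``Gaussian elimination'': they exploit Gorenstein duality for $R/\ci$, annihilator computations in $A=R/\ci$, eigenvector arguments for multiplication maps (Lemma~\ref{lemma2}, Proposition~\ref{n-dim-space}), and Koszul-relation calculations (Proposition~\ref{distinct-linear-ann}), all tailored to the quadratic, low-defect, low-$n$ setting. None of this generalizes to arbitrary $\seca$ and $n$, which is why the conjecture stands.
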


Although there has been some progress on the conjecture, it remains open. The conjecture is shown to be true for $n=2$ by Richert in \cite{Ri}. Francisco \cite{Fr} shows the conjecture for almost complete intersections. Caviglia and Maclagan in \cite{Cav-Mac}  prove the result if $a_i > \sum\limits_{j=1}^{i-1}(a_j-1)$ for $2\leq i \leq n$. 
The rapid growth required for the degrees does not yield much insight into cases like the one in which the regular sequence consists of quadratic forms. When $n=3$, Cooper in \cite{Cooper} proves the EGH conjecture for the cases where  
$(a_1, a_2, a_3) = (2,a_2,a_3)$ and $(a_1, a_2, a_3) = (3,a_2,a_3)$ with $a_2 \leq a_3 \leq a_2+1$. 

One of the most intriguing cases is when $a_1=\cdots = a_n = 2$ for any $n\geq 2$, which is the case for which Eisenbud, Green and Harris originally stated their conjecture.   It is known that the conjecture holds for homogeneous ideals minimally generated by generic quadrics: the case where $\Char K = 0$  was proved by Herzog and Popescu \cite{HP}  and the case of  arbitrary characteristic was proved by Gasharov \cite{Gasharov} around the same time. There have been several other results on the EGH conjecture. More recently, the case when every $f_i$, $i=1,\,\ldots,\,n,$ in the regular sequence is a product of linear forms is settled by Abedelfatah in \cite{Abed}, and results on the EGH conjecture using linkage theory are given by Chong \cite{Chong}.

In this paper we focus on the case when the degrees of the elements of the regular sequence are $a_1= \cdots =a_n=2$. In \cite{Ri}, Richert claimed that the conjecture for quadratic regular sequences is true for $2\leq n\leq 5$, but this work has not been published, and other researchers have been unable to verify this for $n=5$ thus far. Chen, in \cite{Chen}, has given a proof for the case where $n\leq 4$ when $a_1= \cdots=a_n=2$.

In $\S 2$ we recall some definitions and results from the papers of Francisco \cite{Fr},  Caviglia-Maclagan \cite{Cav-Mac} and Chen \cite{Chen}. In $\S 3$ we 
study homogeneous ideals $I$ generated by $n+2$ quadratic forms  in $n$ variables containing a regular sequence of length $n$, and Theorem~\ref{thm-defect2-egh(2)} shows that there is a monomial ideal 
$\lpp = (x_1^2,\,\ldots,\,x_n^2)+J$, where $J$ is a lexicographic ideal in $R$, such that $R/I$ and $R/\lpp$ have the same Hilbert function in degree $2$ and $3$ (i.e., $\EGH_{(2,\,\ldots,\,2), n}(2)$ holds: see Definition \ref{defn-EGH(d)}). In $\S 4$ we give a proof to the claim of Richert for the quadratic regular sequence case when $n=5$.

%

%
%
%
%
%
%
%
%
%
%
%
%
%
 \section{Background and Preliminaries}  
In this section we recall some definitions and state some known results that are used throughout the paper. 
\begin{definition} Let $u = x_1^{a_1}\cdots x_n^{a_n}$ and $v = x_1^{b_1}\cdots x_n^{b_n} $ be monomials in $R$ of the same degree. We say that $u$ is greater than $v$ with respect to the {\it lexicographic} (or {\it lex}) order if there exists an $i$ such that 
$a_i > b_i$ and $a_j = b_j$ for all $j < i$. 

A monomial ideal $J \subseteq R$ is called a \textit{lexicographic ideal} (or \textit{lex ideal}) if, for all degrees $d$, the $d$-th degree component of $J$, denoted by $J_d$, is spanned over the base field $K$ by an initial segment of the degree $d$ monomials in the lexicographic order.
\end{definition}
\begin{definition} Given $2\leq a_1 \leq \cdots \leq a_n$, a \textit{lex-plus-powers ideal} (LPP ideal) $\lpp$ is a monomial ideal in $R$ that can be written as $\lpp = (x_1^{a_1},\,\ldots,\,x_n^{a_n}) + J$ where $J$ is a lex ideal in $R$. 
\end{definition}
This definition agrees with the one in \cite{Cav-Mac}.  Some authors require that the $x_i^{a_i}$ be minimal generators
of $\lpp$, which we do not.  However, since we consider only nondegenerate homogeneous ideals in this paper,
i.e., ideals contained in $(x_1, \, \ldots,\,x_n)^2$, in the case where $a_1 = \cdots = a_n = 2$  it is automatic
that the $x_i^2$ are minimal generators of the ideal under consideration.

In \cite{Fr} Francisco  showed the following for almost complete intersections.
\begin{theorem}[Francisco \cite{Fr}] \label{EGH(d)-ci} Let integers $2 \leq a_1 \leq a_2 \leq \cdots \leq a_n$ and $d \geq a_1$ be given. 
Let the ideal $I$ have minimal generators $f_1,\,\ldots,\,f_n,\,g$ where $f_1,\,\ldots,\,f_n$ form a regular sequence with $\deg f_i = a_i$  and $g$ has degree $d$. Let $\lpp = (x_1^{a_1},\,\ldots,\,x_n^{a_n}, m)$ be the lex-plus-powers ideal where $m$ is the greatest monomial in lex order in degree $d$ that is not in $(x_1^{a_1},\,\ldots, \,x_n^{a_n})$. 
 Then $\Hf_{R/I}(d+1) \leq \Hf_ {R/\lpp}(d+1)$. 
\end{theorem}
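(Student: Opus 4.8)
The plan is to reduce the statement to a comparison of two Hilbert functions by passing to quotients by the regular sequence. First I would set $A = R/(f_1,\,\ldots,\,f_n)$, an Artinian complete intersection with socle degree $\sum (a_i - 1)$, and $B = R/(x_1^{a_1},\,\ldots,\,x_n^{a_n})$, likewise Artinian with the same Hilbert function as $A$ by the Clement--Lindstr\"om theorem. Then $R/I = A/(\bar g)$ where $\bar g$ is the image of $g$ in degree $d$, and $R/\lpp = B/(\bar m)$ where $\bar m$ is the image of the monomial $m$. The key identity is the exact sequence $0 \to (0 :_A \bar g)(-d) \to A(-d) \xrightarrow{\cdot \bar g} A \to A/(\bar g) \to 0$, which gives, in each degree $e$,
\[
\Hf_{R/I}(e) = \Hf_A(e) - \Hf_A(e-d) + \Hf_{(0:_A \bar g)}(e-d),
\]
and similarly for $R/\lpp$ with $B$ and $\bar m$. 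Since $\Hf_A = \Hf_B$, it suffices to prove $\Hf_{(0:_A \bar g)}(1) \le \Hf_{(0:_B \bar m)}(1)$, i.e.\ to compare the dimensions in degree one of the two colon ideals.

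The second step is to identify $(0 :_B \bar m)_1$ explicitly. Because $m$ is the lex-greatest degree-$d$ monomial not in $(x_1^{a_1},\,\ldots,\,x_n^{a_n})$, it has the form $m = x_1^{a_1 - 1} x_2^{a_2 - 1} \cdots x_{k-1}^{a_{k-1}-1} x_k^{t}$ for the appropriate $k$ and $0 \le t \le a_k - 1$, using up the "available" exponents greedily from the left. A variable $x_j$ kills $\bar m$ in $B$ precisely when $x_j m \in (x_1^{a_1},\,\ldots,\,x_n^{a_n})$, which happens exactly for $j = 1,\,\ldots,\,k-1$ always, and for $j = k$ when $t = a_k - 1$. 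So $\Hf_{(0:_B\bar m)}(1)$ equals $k-1$ or $k$, and this is visibly the \emph{maximum} possible value of $\dim_K (0 :_C \bar h)_1$ over all standard graded Artinian quotients $C$ of a polynomial ring with $\Hf_C = \Hf_A$ and all nonzero $\bar h$ of degree $d$ — this maximality is the content of the lex/LPP extremality and would either be cited from \cite{Cav-Mac} or proved by a short Macaulay-type argument on the ideal $(0 :_A \bar g) + (f$'s$)$.

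The third step handles $A$: I must show $\dim_K (0 :_A \bar g)_1 \le \dim_K (0 :_B \bar m)_1$. Equivalently, writing $V = \{\ell \in R_1 : \ell g \in (f_1,\,\ldots,\,f_n)\}$, I need $\dim_K V$ to be at most the combinatorial number computed above. The clean way is: the ideal $(f_1,\ldots,f_n,g) = I$ has $\Hf_{R/I}(d+1) = \Hf_{R/(f_1,\ldots,f_n)}(d+1) - \Hf_{R/(f_1,\ldots,f_n)}(1) + \dim_K V$ (from the displayed formula at $e = d+1$), while for the LPP side $\Hf_{R/\lpp}(d+1) = \Hf_{R/(x^a)}(d+1) - \Hf_{R/(x^a)}(1) + \Hf_{(0:_B\bar m)}(1)$; since the first two terms agree, the desired inequality $\Hf_{R/I}(d+1)\le \Hf_{R/\lpp}(d+1)$ is \emph{equivalent} to $\dim_K V \le \Hf_{(0:_B\bar m)}(1)$, so the theorem becomes exactly the statement that the colon ideal in degree one cannot be larger for $A$ than for the LPP quotient. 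To prove that, I would use Macaulay's bound applied to the ideal $J' = (f_1,\ldots,f_n) : g$ inside $R$: its Hilbert function in degrees $0$ through $1$ is $(0, \dim_K V)$, and one checks that a lex ideal with that initial data, once the powers $x_i^{a_i}$ are thrown in, cannot push $\Hf_{R/I}$ above the LPP value — here the hypothesis $d \ge a_1$ is used to guarantee $g \notin (f_1,\ldots,f_n)$ forces $J'$ to be a proper ideal and $m$ to be well defined.

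The main obstacle I anticipate is precisely the extremality claim in the last step: showing that the genuine colon ideal $(f_1,\ldots,f_n):g$ behaves no worse in degree $1$ than the monomial colon $(x_1^{a_1},\ldots,x_n^{a_n}):m$. This is where one cannot argue purely formally; it requires either invoking a known lex-plus-powers comparison for Hilbert functions (available in the literature for this restricted "one extra generator" setting) or running a Macaulay-type growth estimate by hand, carefully using that $m$ is the lex-largest monomial outside $(x_1^{a_1},\ldots,x_n^{a_n})$ in degree $d$ so that adding it produces the largest possible drop in the Hilbert function in degree $d$ and hence — by Macaulay's theorem applied one degree up — the largest possible value in degree $d+1$. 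Everything else is bookkeeping with the four-term exact sequence and the Clement--Lindstr\"om equality $\Hf_A = \Hf_B$.
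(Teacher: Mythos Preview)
The paper does not give its own proof of this theorem: it is quoted verbatim from Francisco \cite{Fr} and used as a black box (the paper only remarks later that Chen \cite{Chen} has an alternative proof via Koszul relations). So there is no in-paper argument to compare your proposal against.

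That said, your proposal has a genuine gap at the decisive point. Your first two steps are fine: the four-term exact sequence reduces the inequality $\Hf_{R/I}(d+1)\le\Hf_{R/\lpp}(d+1)$ exactly to
\[
\dim_K\bigl((f_1,\ldots,f_n):g\bigr)_1 \;\le\; \dim_K\bigl((x_1^{a_1},\ldots,x_n^{a_n}):m\bigr)_1,
\]
and your identification of the right-hand side with the number of variables appearing to top exponent in $m$ is correct. But step three is where the content lives, and you do not actually prove it. Invoking ``a known lex-plus-powers comparison for Hilbert functions \ldots\ in this restricted one extra generator setting'' is circular: that comparison \emph{is} Francisco's theorem. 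And the Macaulay-growth sketch you offer cannot work as stated: Macaulay's theorem bounds $\Hf_{R/I}(d+1)$ in terms of $\Hf_{R/I}(d)$, but $\Hf_{R/I}(d)=\Hf_{R/\lpp}(d)$ here, so Macaulay alone gives the same upper bound for both and says nothing about which quotient is larger. The whole difficulty is that the constraint ``contains a regular sequence of degrees $a_1,\ldots,a_n$'' is invisible to Macaulay's theorem; seeing it is exactly what separates EGH-type statements from classical ones, and it is what Francisco's actual argument (and Chen's Koszul-syzygy argument) supply. Your reduction is the right first move, but you have reduced the theorem to an equivalent statement rather than to something you can prove.
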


Note that, necessarily, $d \leq \sum_{i=1}^n (a_i-1)$, since $(\vect f n)$ contains all forms of degree larger than that.
If $a_1 = \cdots = a_n = 2$,  then $d \leq n$.

The following corollary is an immediate consequence of Theorem \ref{EGH(d)-ci} above.  
 If  $g \in R$ is a nonzero form of degree $i$ we write $gR_j$ for
the vector space $\{gh: h \in R_j\} \subseteq R_{i+j}$.
\begin{corollary}\label{cor-intersection-dim} Let $I = (f_1,\,\ldots,\,f_n,g)$ be an almost complete intersection as in Theorem 
\ref{EGH(d)-ci} above such that $a_1 = \cdots = a_n = 2$.   Then
\[ \dim_K \big( (f_1,\,\ldots,\,f_n)_{d+1} \cap gR_1 \big) \leq d.\]
\end{corollary}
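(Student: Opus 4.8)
The plan is to translate the intersection dimension into a statement about the Hilbert function of $R/I$ in degree $d+1$ and then invoke Theorem~\ref{EGH(d)-ci} directly. Write $F = (f_1,\,\ldots,\,f_n)$ and $B = R/F$, and let $\bar g \in B_d$ denote the image of $g$. Since $R$ is a domain and $g \neq 0$, multiplication by $g$ is injective on $R_1$, so
\[ \dim_K\big((f_1,\,\ldots,\,f_n)_{d+1} \cap gR_1\big) = \dim_K\{h \in R_1 : gh \in F_{d+1}\} = \dim_K (0 :_B \bar g)_1, \]
where the last equality uses that $F_1 = 0$ because $I$, and hence $F$, is nondegenerate, so $B_1 = R_1$.

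Next I would exploit the short exact sequence of graded $R$-modules
\[ 0 \longrightarrow \big(B/(0:_B \bar g)\big)(-d) \xrightarrow{\,\bar g\,} B \longrightarrow R/I \longrightarrow 0, \]
which comes from $R/I = B/\bar g B$ together with the degree-shifted isomorphism $\bar g B \cong \big(B/(0:_B\bar g)\big)(-d)$. Reading this off in degree $d+1$, and using that $B = R/F$ is a complete intersection of $n$ quadrics (so $\Hf_B(j) = \binom{n}{j}$) and that $\dim_K B_1 = n$, gives
\[ \Hf_{R/I}(d+1) = \binom{n}{d+1} - \Big(n - \dim_K(0:_B\bar g)_1\Big), \]
and hence, combining with the first paragraph,
\[ \dim_K\big((f_1,\,\ldots,\,f_n)_{d+1} \cap gR_1\big) = \Hf_{R/I}(d+1) - \binom{n}{d+1} + n. \]

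Finally I would apply Theorem~\ref{EGH(d)-ci} in the case $a_1 = \cdots = a_n = 2$: $\Hf_{R/I}(d+1) \leq \Hf_{R/\lpp}(d+1)$, where $\lpp = (x_1^2,\,\ldots,\,x_n^2, m)$ and $m$ is the lex-largest degree-$d$ monomial not in $(x_1^2,\,\ldots,\,x_n^2)$, i.e. the lex-largest squarefree monomial of degree $d$, namely $m = x_1 x_2 \cdots x_d$. The monomials surviving in $(R/\lpp)_{d+1}$ are the squarefree monomials of degree $d+1$ not divisible by $x_1\cdots x_d$, and those divisible by it are exactly $x_1\cdots x_d x_k$ for $d < k \leq n$; hence $\Hf_{R/\lpp}(d+1) = \binom{n}{d+1} - (n-d)$. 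Substituting into the displayed identity yields $\dim_K\big((f_1,\,\ldots,\,f_n)_{d+1} \cap gR_1\big) \leq d$.

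None of these steps presents a genuine obstacle: the corollary is essentially a repackaging of Theorem~\ref{EGH(d)-ci} once the Hilbert function of $R/I$ in degree $d+1$ is expressed through the exact sequence above. The one point demanding care is the exact identification of the degree-$d$ generator $m$ of the extremal lex-plus-powers ideal, so that $\Hf_{R/\lpp}(d+1)$ can be computed exactly, together with a check that the bookkeeping remains consistent in the boundary case $d = n$, where $m = x_1\cdots x_n$, $\Hf_{R/\lpp}(n+1) = 0$, and both sides of the asserted inequality equal $n$.
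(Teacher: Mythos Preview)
Your proof is correct and follows essentially the same route as the paper: both arguments express $\dim_K I_{d+1}$ (equivalently, $\Hf_{R/I}(d+1)$) via inclusion--exclusion for $I_{d+1} = (f_1,\ldots,f_n)_{d+1} + gR_1$, identify $m = x_1\cdots x_d$, compute $\Hf_{R/\lpp}(d+1) = \binom{n}{d+1} - (n-d)$, and apply Theorem~\ref{EGH(d)-ci}. Your short exact sequence is just a repackaging of the paper's direct inclusion--exclusion identity $\dim_K I_{d+1} = \dim_K(f_1,\ldots,f_n)_{d+1} + n - \dim_K\big((f_1,\ldots,f_n)_{d+1} \cap gR_1\big)$.
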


\begin{proof} We can write 
\[
\dim_K I_{d+1} = \dim_K(f_1,\,\ldots,\,f_n)_{d+1} + \dim_K gR_1 - \dim_K \big( (f_1,\,\ldots,\,f_n)_{d+1} \cap gR_1 \big),
\]
where $\dim_KgR_1 = n$.  
Then by Theorem \ref{EGH(d)-ci}, we have
\[\dim_K I_{d+1} \geq \dim_K (x_1^2,\,\ldots,\,x_n^2, x_1\cdots x_d)_{d+1}= \dim_K(x_1^2,\,\ldots,\,x_n^2)_{d+1} + n -d\]
Since $\Hf_{R/(f_1,\,\ldots,\,f_n)}(i) = \Hf_{R/(x_1^2,\,\ldots,\,x_n^2)}(i)$ for all $i\geq 0$, 
we can conclude that
\[
\dim_K \big( (f_1,\,\ldots,\,f_n)_{d+1} \cap gR_1 \big) \leq d.
\]
\end{proof}

The next statement is a weaker version of the $\EGH_{\seca,n}$ conjecture. It focuses on the Hilbert function of the given homogeneous ideal only at the two consecutive degrees $d$ and $d+1$ for some non-negative integer $d$. 
\begin{definition}[$\mathbf{\EGH_{\seca,n}(d)}$]\label{defn-EGH(d)} Following Caviglia-Maclagan \cite{Cav-Mac}, we say that ``$\EGH_{\seca,n}(d)$ holds" if for any homogeneous ideal $I \in K[x_1,\,\ldots,\,x_n]$ containing a regular sequence of degrees $\seca = (a_1,\,\ldots,\,a_n)$, where
$2\leq a_1 \leq \cdots \leq a_n$, there exists a lex-plus-powers ideal $\lpp$ containing $\{x_{i}^{a_i} : 1\leq i \leq  n\}$ such that 
\[\dim_K I_d = \dim_K \lpp_d \quad \text{and} \quad \dim_K I_{d+1} = \dim_K \lpp_{d+1}.\]
\end{definition}

\begin{lemma}\label{equal-EGH(d)} The condition $\EGH_{(d,\,\ldots,\,d), n}(d)$ on a polynomial ring
$R = K[x_1,\,\ldots,\,x_n]$
is equivalent to the statement that for the ideal $I$ generated by  $n + \delta$ $K$-linearly independent forms of 
degree $d$ containing a regular sequence of quadrics, one has that $\dim_K I_{d+1} \geq \dim_K {\lpp}_{d+1}$,  where $\lpp =
(x_1^d,\,\ldots,\,x_n^d) +  J'$ and $J'$ is minimally generated by the  greatest in lex order  $\delta$ forms of degree $d$ not already in $(x_1^d,\,\ldots,\,x_n^d)$.  
\end{lemma}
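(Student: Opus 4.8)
The plan is to prove the two implications separately, both resting on one combinatorial observation about lex-plus-powers ideals. \emph{Observation:} if $\mathfrak{g}=(x_1^d,\ldots,x_n^d)+L$ is any lex-plus-powers ideal (with $L$ a lex ideal) for which $\dim_K\mathfrak{g}_d=n+\delta$, then $\mathfrak{g}_d$ is uniquely determined, namely $\mathfrak{g}_d=\lpp_d$, and consequently $\lpp_{d+1}\subseteq\mathfrak{g}_{d+1}$. To see this, list the degree-$d$ monomials as $m_1>m_2>\cdots>m_M$ in lex order, where $M=\dim_K R_d$; since $L$ is a lex ideal, $L_d=\{m_1,\ldots,m_k\}$ for some $k$, so $\mathfrak{g}_d$ is spanned by the monomials $\{m_1,\ldots,m_k\}\cup\{x_1^d,\ldots,x_n^d\}$. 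Passing from $k$ to $k+1$ raises $\dim_K\mathfrak{g}_d$ by $1$ unless $m_{k+1}$ is one of the $x_i^d$, in which case $\dim_K\mathfrak{g}_d$ is unchanged. Hence the set of admissible $k$ is an interval on which this monomial set does not change, and at its right-hand end the set consists of all $n$ powers together with exactly $\delta$ non-powers, which are then the $\delta$ largest non-power monomials of degree $d$; that is, $\mathfrak{g}_d=\lpp_d$. Finally $\lpp=(x_1^d,\ldots,x_n^d)+J'$ is generated in degree $d$, so $\lpp_{d+1}=\lpp_dR_1=\mathfrak{g}_dR_1\subseteq\mathfrak{g}_{d+1}$.

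(\emph{Forward implication.}) Assume $\EGH_{(d,\ldots,d),n}(d)$, and let $I$ be generated by $n+\delta$ $K$-linearly independent forms of degree $d$ and contain a regular sequence of $n$ forms of degree $d$; as $d\ge 2$, $I$ is nondegenerate and $\dim_K I_d=n+\delta$. Applying $\EGH_{(d,\ldots,d),n}(d)$ gives a lex-plus-powers ideal $\mathfrak{g}$ with $\dim_K\mathfrak{g}_d=\dim_K I_d=n+\delta$ and $\dim_K\mathfrak{g}_{d+1}=\dim_K I_{d+1}$. By the Observation, $\dim_K I_{d+1}=\dim_K\mathfrak{g}_{d+1}\ge\dim_K\lpp_{d+1}$, as required.

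(\emph{Reverse implication.}) Assume the displayed inequality holds for every $\delta$, and let $I$ be any nondegenerate homogeneous ideal containing a regular sequence $f_1,\ldots,f_n$ of forms of degree $d$. Put $\delta:=\dim_K I_d-n$; since the $f_i$ lie in $I_d$ and are $K$-linearly independent, $0\le\delta\le\dim_K R_d-n$. Let $\tilde I$ be the ideal generated by the vector space $I_d$: it is generated by $n+\delta$ $K$-linearly independent forms of degree $d$, it contains $f_1,\ldots,f_n$ (each of degree $d$, hence in $I_d=\tilde I_d$), and $\tilde I\subseteq I$. By hypothesis $\dim_K\tilde I_{d+1}\ge\dim_K\lpp_{d+1}$, so $\dim_K I_{d+1}\ge\dim_K\tilde I_{d+1}\ge\dim_K\lpp_{d+1}$, while trivially $\dim_K I_{d+1}\le\dim_K R_{d+1}$. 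To finish, one checks that every integer $D$ with $\dim_K\lpp_{d+1}\le D\le\dim_K R_{d+1}$ is realized as $\dim_K\mathfrak{g}_{d+1}$ by some lex-plus-powers ideal $\mathfrak{g}$ with $\dim_K\mathfrak{g}_d=n+\delta$: take $\mathfrak{g}=\lpp+(S)$ with $S$ a lex segment in degree $d+1$, and let $|S|$ run from $0$ up to $\dim_K R_{d+1}$; since $R_1$ times a lex segment is again a lex segment, each such $\mathfrak{g}$ is lex-plus-powers with $\mathfrak{g}_d=\lpp_d$, and $\dim_K\mathfrak{g}_{d+1}$ increases monotonically from $\dim_K\lpp_{d+1}$ to $\dim_K R_{d+1}$ in steps of $0$ or $1$. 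Taking $D=\dim_K I_{d+1}$ produces a lex-plus-powers ideal agreeing with $I$ in degrees $d$ and $d+1$, which is exactly $\EGH_{(d,\ldots,d),n}(d)$.

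The main thing to get right is the Observation together with its mirror image in the reverse implication: that $\mathfrak{g}_d$ is forced to be $\lpp_d$, that $\lpp_{d+1}$ is the minimal admissible degree-$(d+1)$ component, and that the admissible values of $\dim_K\mathfrak{g}_{d+1}$ sweep out a full interval of integers. All three follow from the fact that these dimensions vary monotonically, by jumps of at most one, as one enlarges lex segments, together with Macaulay's lemma that $R_1$ times a lex segment is a lex segment; the passage from a general $I$ to the ideal $(I_d)$, and the bookkeeping matching this with Definition~\ref{defn-EGH(d)}, are routine.
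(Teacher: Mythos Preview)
Your proof is correct and follows essentially the same approach as the paper's: both directions rest on the fact that an LPP ideal with $\dim_K\mathfrak{g}_d=n+\delta$ has $\mathfrak{g}_d=\lpp_d$, and on enlarging $\lpp$ by a lex segment in degree $d+1$ to hit the required value of $\dim_K I_{d+1}$. Your version is more explicit than the paper's in two places the paper leaves tacit: the reduction from an arbitrary homogeneous $I$ to the subideal $\tilde I=(I_d)$ in the reverse implication, and the verification (via the fact that $R_1$ times a lex segment is a lex segment) that $\lpp+(S)$ remains an LPP ideal and that $\dim_K\mathfrak{g}_{d+1}$ sweeps out the full interval of integers.
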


\begin{proof}  If there is an LPP ideal $(x_1^d,\,\ldots,\,x_n^d) + J$,  where $J$ is a lex ideal, with
the same Hilbert function as $I$ in degrees $d$ and $d+1$,  it is clear that $J_d$ must be spanned over $K$ 
by the specified generators of $J'$, so that $(x_1^d,\,\ldots,\,x_n^d) + J' \subseteq (x_1^d,\,\ldots,\,x_n^d) + J$,
 which implies the specified inequality on the Hilbert functions.  Moreover, when that inequality holds
 we may increase $\lpp:=(x_1^d, \, \ldots,\,x_n^d) + J'$ to an LPP ideal with the same Hilbert function as
 $I$ in degrees $d$ and $d+1$:
 if $\Delta = \Hf_{I} (d+1) - \Hf_{\lpp} (d+1)$, we may simply include the greatest (in lex order)  $\Delta$ forms of
degree $d+1$ not already in  $\lpp$.
  \end{proof}

\begin{remark} We shall eventually be focused on $\EGH_{\seca,n}(d)$ in the case where
$a_1= \cdots = a_n = d =2$, simply referred as $\EGH_{(2,\,\ldots,\,2),n}(2)$ or $\EGH_{\sec2,n}(2)$.  We shall routinely make use of this lemma in this case of quadratic regular sequence and $d=2$.
\end{remark}

\begin{lemma}[Caviglia-Maclagan \cite{Cav-Mac}] \label{CM-lemma}Fix $\seca = (a_1,\,\ldots,\,a_n) \in \mathbb{N}^n$ where $2 \leq a_1 \leq a_2 \leq \cdots \leq a_n$ and set $s = \sum\limits_{i=1}^{n}(a_i-1).$ Then for any $0\leq d \leq s-1$, $\EGH_{\seca,n}(d)$ holds if and only if $\EGH_{\seca,n}$(s-1-d) holds.
\\ 
 \noindent
Furthermore, the $\EGH_{\seca,n}$ conjecture holds if and only if $\EGH_{\seca,n}(d)$ holds for all degrees $d\geq 0$.
\end{lemma}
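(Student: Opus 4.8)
The plan is to follow the argument of Caviglia and Maclagan, deriving both assertions from Gorenstein (complete intersection) duality together with the uniqueness of lex ideals. Fix a regular sequence $f_1,\ldots,f_n$ of degrees $\seca$, write $\mathbf{f}=(f_1,\ldots,f_n)$, $A=R/\mathbf{f}$, and $A_0=R/(x_1^{a_1},\ldots,x_n^{a_n})$. Both $A$ and $A_0$ are graded Artinian complete intersections with the common Hilbert series $\prod_{i=1}^{n}(1+t+\cdots+t^{a_i-1})$, hence both are Gorenstein with one-dimensional socle concentrated in top degree $s=\sum_{i=1}^{n}(a_i-1)$; in particular each such algebra $B$ satisfies $\Hf_B(e)=\Hf_B(s-e)$ and carries a perfect graded pairing $B_e\times B_{s-e}\to B_s\cong K$. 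For a homogeneous ideal $\mathfrak{b}\inc B$ put $\mathfrak{b}^{\vee}:=(0:_B\mathfrak{b})$. From the pairing one gets that $(\mathfrak{b}^{\vee})_e$ is the orthogonal complement of $\mathfrak{b}_{s-e}$ inside $B_e$, that $\mathfrak{b}\mapsto\mathfrak{b}^{\vee}$ is an inclusion-reversing involution on homogeneous ideals (the Gorenstein double-annihilator property), and that
\[
\Hf_{B/\mathfrak{b}^{\vee}}(e)\;=\;\Hf_{\mathfrak{b}}(s-e)\qquad\text{for all }e.
\]

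Now transport this to $R$. If $I\inc R$ is homogeneous and contains the regular sequence $\mathbf{f}$, let $I^{\vee}$ be the preimage in $R$ of $(I/\mathbf{f})^{\vee}\inc A$; then $I^{\vee}$ again contains $\mathbf{f}$ (so it contains a regular sequence of degrees $\seca$), the assignment $I\mapsto I^{\vee}$ is involutive on such ideals, and since $\Hf_{R/\mathbf{f}}=\Hf_{A_0}$ the displayed identity becomes
\[
\Hf_{I^{\vee}}(e)\;=\;\Hf_{R}(e)-\Hf_{I}(s-e)+\Hf_{(x_1^{a_1},\ldots,x_n^{a_n})}(s-e)\qquad\text{for all }e.
\]
Carrying out the same construction inside $A_0$ attaches to each lex-plus-powers ideal $\lpp$ a monomial ideal $\lpp^{\vee}$ obeying the identical formula. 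The \textbf{crux} is that $\lpp^{\vee}$ is again a lex-plus-powers ideal, and I expect this combinatorial step to be where essentially all the work lies. To prove it, use the monomial $K$-basis $\{x^{\mathbf{c}}:0\le c_i\le a_i-1\}$ of $A_0$, whose socle generator is $x^{\mathbf{a}-\mathbf{1}}$ with $\mathbf{a}-\mathbf{1}=(a_1-1,\ldots,a_n-1)$: the pairing sends $x^{\mathbf{c}}\cdot x^{\mathbf{c}'}$ to $x^{\mathbf{c}+\mathbf{c}'}$, which is the socle generator exactly when $\mathbf{c}+\mathbf{c}'=\mathbf{a}-\mathbf{1}$ and is $0$ otherwise, so for a monomial ideal $\mathfrak{b}_0\inc A_0$ one has $x^{\mathbf{c}}\in\mathfrak{b}_0^{\vee}$ iff $x^{(\mathbf{a}-\mathbf{1})-\mathbf{c}}$ is a standard monomial of $A_0/\mathfrak{b}_0$. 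Since the involution $\mathbf{c}\mapsto(\mathbf{a}-\mathbf{1})-\mathbf{c}$ reverses the lexicographic order in each degree, it carries the standard monomials of a lex ideal of $A_0$ (which form a final segment of the lex order in each degree) onto an initial segment; so $\mathfrak{b}_0^{\vee}$ is a lex ideal of $A_0$, and the routine identification of lex ideals of $A_0$ with lex-plus-powers ideals of $R$ completes the claim.

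Granting this, the equivalence $\EGH_{\seca,n}(d)\Longleftrightarrow\EGH_{\seca,n}(s-1-d)$ is a short dimension chase; by the symmetry $d\leftrightarrow s-1-d$ it is enough to deduce $\EGH_{\seca,n}(d)$ from $\EGH_{\seca,n}(s-1-d)$. Given a homogeneous $I$ containing a regular sequence of degrees $\seca$, form $I^{\vee}$, which also contains such a sequence; then $\EGH_{\seca,n}(s-1-d)$ provides a lex-plus-powers ideal $\lpp$ with $\Hf_{I^{\vee}}(e)=\Hf_{\lpp}(e)$ for $e\in\{s-1-d,\,s-d\}$. Now $\lpp^{\vee}$ is a lex-plus-powers ideal, and applying the involutive identity above to $I^{\vee}$ and to $\lpp$ (and using $(I^{\vee})^{\vee}=I$) yields $\Hf_{I}(e)=\Hf_{\lpp^{\vee}}(e)$ for $e\in\{d,\,d+1\}$, which is $\EGH_{\seca,n}(d)$.

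For the last assertion, one direction is immediate: a single lex-plus-powers ideal matching a given $I$ in all degrees matches it in every consecutive pair. For the converse, fix $I$ containing a regular sequence of degrees $\seca$ and set $t_e=\Hf_{I}(e)-\Hf_{(x_1^{a_1},\ldots,x_n^{a_n})}(e)=\dim_K(I/\mathbf{f})_e$, so $0\le t_e\le\dim_K(A_0)_e$. By uniqueness of lex ideals (Macaulay for $R$, and Clements--Lindstr\"om for $A_0$), the lex-plus-powers ideal forced to agree with $I$ in degree $e$ is the one whose lex part in degree $e$ is the initial lex segment of $t_e$ standard monomials; then $\EGH_{\seca,n}(e)$, applied to $I$, says exactly that $t_{e+1}$ is at least the size of the shadow of that segment, taken modulo the powers $x_i^{a_i}$. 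Since this holds for every $e\ge 0$, the family of initial lex segments of sizes $(t_e)_{e\ge 0}$ is closed under passage to shadows, hence constitutes the graded components of a lex ideal $J_0$ of $A_0$; lifting $J_0$ to a lex ideal $J$ of $R$, the lex-plus-powers ideal $\lpp=(x_1^{a_1},\ldots,x_n^{a_n})+J$ satisfies $\Hf_{\lpp}=\Hf_{I}$, establishing $\EGH_{\seca,n}$. The single hard point is the combinatorial claim of the second paragraph --- that Gorenstein duality preserves the class of lex-plus-powers ideals; the remainder is bookkeeping with the perfect pairings.
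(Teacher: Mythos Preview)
The paper does not give a proof of this lemma; it is quoted from \cite{Cav-Mac} and used as a black box. Your argument is correct and is essentially the Caviglia--Maclagan proof: Gorenstein duality in the Artinian complete intersections $R/\mathbf{f}$ and $R/(x_i^{a_i})$ pairs degrees $e$ and $s-e$, the explicit monomial involution $\mathbf{c}\mapsto(\mathbf{a}-\mathbf{1})-\mathbf{c}$ (which reverses lex order degree by degree) shows that the annihilator of a lex ideal of $A_0$ is again lex, and the ``furthermore'' clause follows because the uniquely determined degree-by-degree lex segments, once each one contains the shadow of the previous, assemble into a single lex ideal of $A_0$.
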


From now on, we always assume $\seca = \sec2 = (2,\,\ldots,\,2)$ for $n\geq 2$, unless it is stated otherwise. 
\begin{remark}\label{Rmk-EGH-01} For any $n\geq 2$, $\EGH_{\sec2,n}(0)$ holds trivially. 
In \cite[Proposition 2.1]{Chen}, Chen showed that $\EGH_{\sec2,n}(1)$ is true for any $n\geq 2$.
\end{remark}

Chen proved the following.
\begin{theorem}[Chen \cite{Chen}]
The $\EGH_{\sec2, n}$ conjecture holds when $2\leq n \leq 4$.
\end{theorem}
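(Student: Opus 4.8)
The plan is to reduce the full conjecture to a short list of cases of $\EGH_{\sec2,n}(d)$ by means of the Caviglia--Maclagan machinery, and then to check that for $n\le 4$ every case on that list is either trivial or already recorded. Since $\seca=\sec2$ gives $s=\sum_{i=1}^n(a_i-1)=n$, Lemma~\ref{CM-lemma} tells us that $\EGH_{\sec2,n}$ holds if and only if $\EGH_{\sec2,n}(d)$ holds for every $d\ge 0$, and that $\EGH_{\sec2,n}(d)$ is equivalent to $\EGH_{\sec2,n}(n-1-d)$ for $0\le d\le n-1$.

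First I would dispose of the degrees $d\ge n$. A regular sequence of $n$ quadrics generates a complete intersection whose quotient has Hilbert function $j\mapsto\binom{n}{j}$; in particular $(f_1,\dots,f_n)_j=R_j$ for $j\ge n+1$ and $(f_1,\dots,f_n)_n$ has codimension $1$ in $R_n$. Hence any homogeneous $I$ containing such a sequence satisfies $I_j=R_j$ for $j\ge n+1$, while $\dim_K I_n$ is either $\dim_K R_n-1$ or $\dim_K R_n$. In the former case the LPP ideal $(x_1^2,\dots,x_n^2)$, and in the latter the LPP ideal $(x_1^2,\dots,x_n^2)+(x_1,\dots,x_n)^n$, has the same Hilbert function as $I$ in degrees $d$ and $d+1$ for every $d\ge n$; so $\EGH_{\sec2,n}(d)$ holds whenever $d\ge n$.

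Next I would combine the base cases with the symmetry. By Remark~\ref{Rmk-EGH-01} the statements $\EGH_{\sec2,n}(0)$ and $\EGH_{\sec2,n}(1)$ hold, and applying the equivalence $\EGH_{\sec2,n}(d)\Leftrightarrow\EGH_{\sec2,n}(n-1-d)$ we also obtain $\EGH_{\sec2,n}(n-1)$ and $\EGH_{\sec2,n}(n-2)$. For $n\le 4$ one has $\{0,1,\dots,n-1\}\subseteq\{0,\,1,\,n-2,\,n-1\}$ --- indeed for $n=2,3,4$ these two sets coincide --- so $\EGH_{\sec2,n}(d)$ is known for all $0\le d\le n-1$; together with the previous step this yields $\EGH_{\sec2,n}(d)$ for all $d\ge 0$, hence the conjecture.

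The only genuinely substantive ingredient along this route is $\EGH_{\sec2,n}(1)$, i.e.\ \cite[Proposition~2.1]{Chen}, which describes how a space of linear forms together with some quadrics (containing a quadratic regular sequence) multiplies into degree $2$; I expect that step to carry the real content, although it is short. For $n\le 4$ I anticipate no further obstacle --- the theorem is essentially a bookkeeping statement about which residues $d$ survive the symmetry once $d=0,1$ are in hand. What this argument cannot reach is the self-dual case $\EGH_{\sec2,5}(2)$, the first value of $d$ in the range $2\le d\le n-1$ equivalent under $d\mapsto n-1-d$ only to itself, and this is precisely the new difficulty taken up in \S3 and \S4.
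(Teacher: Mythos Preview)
Your proposal is correct and follows exactly the route the paper attributes to Chen: invoke Lemma~\ref{CM-lemma} so that the full conjecture reduces to checking $\EGH_{\sec2,n}(d)$ for all $d$, use the duality $d\leftrightarrow n-1-d$, and observe that for $n\le 4$ the interval $\{0,\dots,n-1\}$ is covered by $\{0,1,n-2,n-1\}$, so Remark~\ref{Rmk-EGH-01} (the cases $d=0,1$) finishes the argument. The paper itself does not spell out a proof beyond that one-line sketch, and your added paragraph handling $d\ge n$ explicitly is a harmless (and correct) elaboration that the paper leaves implicit.
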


Chen's proof of this uses Lemma \ref{CM-lemma} above, and the observation that, when $n=4$, to demonstrate that 
the $\EGH_{\sec2,4}$ conjecture is true, it suffices to show  that $\EGH_{\sec2,4}(0)$  and $\EGH_{\sec2,4}(1)$  are 
true.

%
%
%
%
%
%
%
%
%
%
%
\section{$\EGH_{\sec2,n}(2)$ for defect two ideals} 

In this section, we focus on the homogeneous ideals in $K[x_1,\,\ldots,\,x_n]$ for $n\geq 5$ that are generated by $n+2$ quadratic forms containing a regular sequence. In particular, we study their Hilbert functions in degree $3$.
\begin{definition} If $I$ is a homogeneous ideal minimally generated by $n+\delta$ forms that contain a regular sequence of length $n$, then $I$ is said to be a \textit{defect $\delta$} ideal.
\end{definition}
Clearly, when $\delta=0$ then $I$ is generated by a regular sequence, it is a complete intersection, and we understand the Hilbert function completely.  If $\delta=1$, then $I$ is an almost complete intersection.
\begin{definition} We call a homogeneous ideal a \textit{quadratic ideal} if it is generated by quadratic forms. 

Let $I = (f_1,\,\ldots,\,f_n, g,h)$ be a homogeneous ideal minimally generated by $n+2$ quadrics where $f_1,\,\ldots,\,f_n$ form a regular sequence. We call such an ideal a \textit{defect two ideal generated by quadrics} or simply a \textit{defect two quadratic ideal}. More generally, if a quadratic ideal is a defect $\delta$ ideal, then we call it \textit{defect $\delta$} quadratic ideal.
\end{definition}
\begin{example}
The lex-plus-powers ideal $\lpp =  (x_1^2,\,\ldots,\,x_n^2, x_1x_2, x_1x_3)$ in $R$ is also a defect two quadratic ideal. 

Further, for any homogeneous defect two quadratic ideal $I$, we have the equality 
 \[\dim_K I_2 = n+2 = \dim_K \lpp_2.\]
 \end{example}
\begin{mquestion}[$\EGH_{\sec2,n}(2)$ for defect two quadratic ideals]\label{main-question} 

For any $n\geq5$, is it true that  \[\dim_K I_3 \geq n^2+2n-5 =\dim_K \lpp_3?\]	

\end{mquestion}

An affirmative answer for this question is proved completely in Theorem~\ref{thm-defect2-egh(2)} below.

\begin{notation}\label{notfg} Throughout the rest of this paper we write  $\ci$ for the ideal $(\vect f n)R$ when $\vect f n$ is a regular
sequence of quadratic forms,  and in the defect $\delta$ quadratic ideal case we write $\fg$ for the additional 
generators $g_1,\,\ldots,\,g_\delta$ of the quadratic ideal. Here, $\vect fn, \vect g \delta$ are assumed to be
linearly independent over $K$. Moreover, henceforth,
we write $J$ for the ideal $\ci + (\vect g {\delta-1})$.
 However, when $\delta = 1$ or $2$ we may write $g,\,h$ for $g_1, g_2$, so that 
whenever $\delta =2$ we henceforth write $J$ for the ideal  $\ci + (g_1) = \ci + (g)$. We denote the graded Gorenstein Artin $K$-algebra $R/\ci$ by $A$. \end{notation}

We know that, if $a_1 = \cdots = a_n = \deg g =2$, Theorem \ref{EGH(d)-ci} shows that 
$$\dim_K J_3 \geq n^2+n-2$$ and then Corollary \ref{cor-intersection-dim} gives 
$\dim_K \big(  \ci_3 \cap gR_1 \big)  \leq 2$. 


\begin{remark}\label{defect2-chen} In \cite[Proposition 3.7]{Chen} Chen gave a positive answer to the Question \ref{main-question} for defect two quadratic ideals $I = \ci +(g,h)$ if $\dim_K \big(  \ci_3 \cap gR_1 \big) = 2$.  We shall make repeated use of this fact in the sequel.
\end{remark}

In this section  we show $\EGH_{\sec2, n}(2)$ for a defect two quadratic ideal $I = \ci +(g,h)$ under the condition that 
$\dim_K \big( \ci_3 \cap g'R_1 \big) \leq 1$ for all $g' \in Kg+Kh - \{0\}$: this covers all the cases for which Chen's result in 
Proposition \ref{defect2-chen} is not applicable.\bigskip

\begin{lemma}\label{EGH(2)-cases} As in Notation~\ref{notfg},  $J$ is the defect 1 quadratic ideal $\ci + gR$.  Then: 
$$\dim_K I_3  = n^2 + 2n - \dim_K \big( \ci_3 \cap gR_1 \big)-  \dim_K \big( J_3 \cap hR_1 \big).$$

Consequently,  for the cases that are not covered by the Proposition \ref{defect2-chen} we have:\\

{\bf(i)} If $\dim_K \big( \ci_3 \cap gR_1 \big) =1$ then $\dim_K I_3 = n^2+2n-1 - \dim_K \big( J_3 \cap hR_1 \big)$, and $\EGH_{\sec2,n}(2)$ holds for a defect two quadratic ideal $I$ if and only if  $\dim_K \big( J_3 \cap hR_1 \big) \leq 4$.\\

{\bf(ii)} If $\dim_K \big( \ci_3 \cap gR_1 \big) =0$ then $\dim_K I_3 = n^2+2n - \dim_K \big( J_3 \cap hR_1 \big)$, and $\EGH_{\sec2,n}(2)$ holds for $I$  if and only if  $\dim_K \big( J_3 \cap hR_1 \big) \leq 5$. 
\end{lemma}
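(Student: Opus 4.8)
The plan is to compute $\dim_K I_3$ by successively intersecting the graded pieces of the three ideals $\ci \subseteq J = \ci + gR \subseteq I = J + hR$ in degree $3$, and then read off the two stated cases from the resulting formula together with the value $n^2+2n-5 = \dim_K \lpp_3$ from Main Question~\ref{main-question}. First I would apply the elementary inclusion-exclusion identity for dimensions of sums of subspaces twice. Since $\dim_K J_3 = \dim_K \ci_3 + \dim_K gR_1 - \dim_K(\ci_3 \cap gR_1)$ and $\dim_K I_3 = \dim_K J_3 + \dim_K hR_1 - \dim_K(J_3 \cap hR_1)$, I combine these to get
\[
\dim_K I_3 = \dim_K \ci_3 + \dim_K gR_1 + \dim_K hR_1 - \dim_K(\ci_3 \cap gR_1) - \dim_K(J_3 \cap hR_1).
\]
Here I use that $gR_1$ and $hR_1$ are each $n$-dimensional: this requires $g$ and $h$ to be nonzero forms of degree $2$, which holds because $I$ is a defect two quadratic ideal, and multiplication by a nonzero form of positive degree in the polynomial ring is injective (the ring is a domain), so $\dim_K gR_1 = \dim_K R_1 = n$, and similarly for $h$. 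Next I need $\dim_K \ci_3 = n^2$: since $\vect f n$ is a regular sequence of quadrics, $\Hf_{R/\ci}(3) = \Hf_{R/(x_1^2,\,\ldots,\,x_n^2)}(3)$, and a direct count of the degree-$3$ monomials not divisible by any $x_i^2$ gives $\binom{n}{3}\cdot 6/6$... more carefully, $\Hf_{R}(3) = \binom{n+2}{3}$, and $\Hf_{R/\ci}(3)$ equals the number of squarefree-in-the-sense-of-exponent-$\le 1$ monomials of degree $3$, namely $\binom{n}{3}$, so $\dim_K \ci_3 = \binom{n+2}{3} - \binom{n}{3} = n^2$. Substituting these values yields exactly the displayed formula $\dim_K I_3 = n^2 + 2n - \dim_K(\ci_3 \cap gR_1) - \dim_K(J_3 \cap hR_1)$.

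For the two numbered cases, I restrict to the situation not covered by Proposition~\ref{defect2-chen}, i.e.\ $\dim_K(\ci_3 \cap g'R_1) \le 1$ for all nonzero $g' \in Kg + Kh$; in particular $\dim_K(\ci_3 \cap gR_1) \in \{0,1\}$. In case (i), substituting $\dim_K(\ci_3 \cap gR_1) = 1$ into the formula gives $\dim_K I_3 = n^2 + 2n - 1 - \dim_K(J_3 \cap hR_1)$; by Lemma~\ref{equal-EGH(d)} (applied with $d=2$, noting that $I$ has $n+2$ linearly independent quadrics containing a regular sequence of quadrics), $\EGH_{\sec2,n}(2)$ holds for $I$ precisely when $\dim_K I_3 \ge \dim_K \lpp_3 = n^2 + 2n - 5$, which rearranges to $\dim_K(J_3 \cap hR_1) \le 4$. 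Case (ii) is identical with $\dim_K(\ci_3 \cap gR_1) = 0$, giving the threshold $\dim_K(J_3 \cap hR_1) \le 5$.

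The content of this lemma is essentially bookkeeping, so there is no serious obstacle; the only point requiring a little care is justifying $\dim_K gR_1 = \dim_K hR_1 = n$ and the value $\dim_K \ci_3 = n^2$, both of which follow from standard facts (the polynomial ring is a domain, and regular sequences of quadrics have the same Hilbert function as the monomial complete intersection $(x_1^2,\,\ldots,\,x_n^2)$). The real work of the section lies not here but in subsequently bounding $\dim_K(J_3 \cap hR_1)$ under the hypothesis $\dim_K(\ci_3 \cap g'R_1) \le 1$ for all $g'$, which is where the structural analysis of the defect-one ideal $J$ and its interaction with $h$ will be needed.
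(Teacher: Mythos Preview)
Your proposal is correct and follows exactly the same approach as the paper: two applications of the dimension formula for sums of subspaces, substitution of $\dim_K \ci_3 = n^2$ and $\dim_K gR_1 = \dim_K hR_1 = n$, and then reading off (i) and (ii) from the target value $n^2+2n-5$. You even supply more justification than the paper does for the numerical values $n^2$ and $n$; the paper simply writes the chain of equalities and declares (i), (ii) immediate.
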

\begin{proof}   We have:
\begin{align*}
\nonumber \dim_K I_3  &= \dim_K J_3+ \dim_K( hR_1 )- \dim_K \big( J_3 \cap hR_1 \big) \\ 
                   &= \dim_K \ci_3 + \dim_K( gR_1 ) - \dim_K \big(  \ci_3 \cap gR_1 \big)  +\dim_K( hR_1 )- \dim_K \big( J_3 \cap hR_1 \big) \\  \nonumber
                 &= n^2 + 2n - \dim_K \big( \ci_3 \cap gR_1 \big)-  \dim_K \big( J_3 \cap hR_1 \big),
\end{align*}
and then (i) and (ii) are immediate.
\end{proof}

\begin{remark} \label{remark-for-n=5}
Let $n =5$, so that $\ci = (f_1,\,\ldots,\,f_5)$. For a defect two quadratic ideal $I=(\ci, g,h) \subseteq K[x_1,\,\ldots,\, x_5]$, if $\dim_K \big( \ci_3 \cap gR_1 \big) =0$ then clearly $\dim_K \big( (\ci,\,g)_3 \cap hR_1 \big)   \leq  \dim_K (hR_1) \leq 5$, therefore  
$\EGH_{\sec2, 5}(2)$ holds for such an ideal $I$.  However, we must give an argument to cover all possible cases, that is, when $\dim_K \big( \ci_3 \cap gR_1 \big) =1$, to be able to confirm $\EGH_{\sec2, 5}(2)$ for every defect two quadratic ideal.  In the last section, we discuss the EGH conjecture for $n=5$ and $a_1=\cdots=a_5=2$ in detail.
\end{remark}

%
Next, we proceed with two useful lemmas.
\begin{lemma}\label{lemma1} Let $A$ be the graded Gorenstein Artin $K$-algebra $R/\ci$ with $\dim_K A_1= n$. Let $g, h$ be two quadratic forms such that $gA_1= hA_1$. Then 
$\Ann_{A_1} g = \Ann_{A_1}h$.

Moreover, $\Ann_{A_i}(g)= \Ann_{A_i}(h) $ if $i\neq n-2$.
\end{lemma}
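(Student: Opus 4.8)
The plan is to exploit the Gorenstein duality of $A = R/\ci$. Since $\ci$ is a complete intersection of $n$ quadrics, $A$ is Artinian Gorenstein with top (socle) degree $\sum_{i=1}^{n}(2-1) = n$, so multiplication gives perfect pairings $A_i \times A_{n-i} \to A_n \cong K$ for every $i$; I will write $W^{\perp}$ for the orthogonal complement of a subspace $W$ with respect to the appropriate one of these pairings. Throughout, $g$ and $h$ are identified with their images in $A_2$.

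First I would record a duality description of the annihilators in play. For $0 \le i \le n-2$ and $a \in A_i$, the perfectness of the pairing $A_{i+2}\times A_{n-i-2}\to A_n$ shows that $ga = 0$ in $A_{i+2}$ if and only if $\langle ga,\, b\rangle = 0$ for all $b \in A_{n-i-2}$; since $\langle ga,\,b\rangle = \langle a,\, gb\rangle$ this says exactly that $a$ is orthogonal to $gA_{n-i-2}\subseteq A_{n-i}$. Hence $\Ann_{A_i}(g) = \big(gA_{n-i-2}\big)^{\perp}$ for $0 \le i \le n-2$, while $\Ann_{A_i}(g) = A_i$ trivially for $i \ge n-1$ because then $A_{i+2} = 0$; the same statements hold with $h$ in place of $g$.

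The second ingredient is elementary. Because $A$ is standard graded we have $A_j = A_1 \cdot A_{j-1}$ for every $j \ge 1$, so the hypothesis $gA_1 = hA_1$ propagates upward: $gA_j = (gA_1)A_{j-1} = (hA_1)A_{j-1} = hA_j$ for all $j \ge 1$. (This breaks down at $j=0$, where $gA_0 = Kg$ and $hA_0 = Kh$ need not coincide — and this is precisely where the exceptional degree comes from.)

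Putting the pieces together: for $0 \le i \le n-3$ we have $n-i-2 \ge 1$, hence $gA_{n-i-2} = hA_{n-i-2}$ and therefore $\Ann_{A_i}(g) = \big(gA_{n-i-2}\big)^{\perp} = \big(hA_{n-i-2}\big)^{\perp} = \Ann_{A_i}(h)$; and for $i \ge n-1$ both annihilators equal $A_i$. Together these cover every $i \ne n-2$, and specializing to $i=1$ (legitimate since $n \ge 5$) gives the first assertion. I do not anticipate a serious obstacle: the only points requiring care are tracking the ranges of degrees in which the pairing is perfect, and recognizing that $i = n-2$ must genuinely be excluded, since there $gA_{n-i-2} = gA_0 = Kg$, which is not forced to equal $Kh$.
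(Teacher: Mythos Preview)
Your argument is correct. Both your proof and the paper's rest on Gorenstein duality together with the observation that $gA_1 = hA_1$ propagates to $gA_j = hA_j$ for all $j \ge 1$, so in that sense they share the same core idea. The execution, however, is genuinely different and worth noting. You use the perfect pairing $A_i \times A_{n-i} \to A_n$ to identify $\Ann_{A_i}(g) = (gA_{n-i-2})^{\perp}$ directly, so that equality of the subspaces $gA_{n-i-2}$ and $hA_{n-i-2}$ (for $n-i-2 \ge 1$) immediately yields equality of the annihilators as \emph{sets}. The paper instead routes the same duality through Hilbert function computations: it records the symmetric $O$-sequence of $gA(-2) \cong A/\Ann_A(g)$, then computes the Hilbert functions of $A/gA$, $\Ann_A(g)$, $(g,h)A$, $A/(g,h)A$, and $\Ann_A(g,h)$ in turn, and finally deduces equality from a dimension count together with the containment $\Ann_{A_1}(g,h) \subseteq \Ann_{A_1}(g)$. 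Your route is shorter and avoids invoking the Gorenstein property of $A/\Ann_A(g)$; the paper's route has the side benefit of displaying all the relevant Hilbert functions explicitly, though they are not used elsewhere. One small remark: your parenthetical ``legitimate since $n \ge 5$'' is stronger than needed --- the case $i=1$ only requires $n \ne 3$, and lands in your range $0 \le i \le n-3$ already for $n \ge 4$ --- but since the ambient section assumes $n \ge 5$ this is harmless.
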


\begin{proof}  Suppose that the linear annihilator space of $g$, $\Ann_{A_1} g$, has dimension $a$ and $gA_1=hA_1$. Thus $gA_1$ has dimension $n-a$ and clearly $hA_1$ and $\Ann_{A_1} h$ have dimensions $n-a$ and $a$, respectively.

Notice that $gA(-2) \cong A /\Ann_A(g)$, hence it is Gorenstein and it has a symmetric O-sequence
\[(0, 0, 1, n-a, e_4, e_5,\,\ldots,\,e_5, e_4, n-a, 1),\]
 where $e_i$ denotes the dimension of $[gA]_i$ and $e_i = e_{n-i+2}$ for $2\leq i \leq n$. 
Then the Hilbert function of $A/gA$ is 
\[(1,  n,  \binom{n}{2}-1, \binom{n}{3}-n+a, \binom{n}{4}-e_4,\,\ldots,\,\binom{n}{3}-e_5, \binom{n}{2}-e_4, a, 0 ).\]

Since $\Ann_A(g) \cong \Hom_K( A / gA, A) \cong (A/gA)^\vee $, the Hilbert function of  $\Ann_A(g)$ is 
\[ (0, a, \binom{n}{2}-e_4,\,\ldots,\,\binom{n}{4}-e_4, \binom{n}{3}-n+a, \binom{n}{2}-1, n, 1).\] 

Recall that $gA_1 = hA_1$, $gA_i = hA_i$ for all $i\geq 2$, so $(g, h)A$ has the Hilbert function 
\[(0, 0, 2, \underbrace{n-a, e_4,\,\ldots,\,e_4, n-a, 1}_{\text{ the same as for } gA}).\]

Then the O-sequence of $A/(g,h)$ becomes
\[(1, n, \binom{n}{2}-2, \binom{n}{3}-n+a, \binom{n}{4}-e_4,\,\ldots,\,\binom{n}{3}-e_5, \binom{n}{2}-e_4, a, 0 ),\]

and it follows that $\Ann_A (g,h)$ has the Hilbert function
\[(0, a, \binom{n}{2}-e_4,\,\ldots,\,\binom{n}{4}-e_4, \binom{n}{3}-n+a, \binom{n}{2}-2, n, 1).\]
We know that $\Ann_A (g,h) = \Ann_A (g) \cap \Ann_A (h)$, and in degree $1$, $\Ann_A(g,h)$ has dimension $a$, so 
$\Ann_{A}(g,h) = \Ann_{A_1}(g) = \Ann_{A_1}(h)$. Further, $\Ann_A(g)$ and $\Ann_A(h)$ are the same in every degrees except in degree $n-2$. 
\end{proof}

\begin{lemma}\label{lemma2} Let $g, h$ be two quadratic forms in a graded Gorenstein Artin $K$-algebra $A$ such that $gA_i = hA_i$ and $g, h$ have the same annihilator space $V$ in $A_i$ for some $i\geq 1$. Then there exists $g' \in Kg+Kh - \{ 0\}$ such that \[ \dim_K \Ann_{A_i}(g') \geq \dim_K V +1.\]
\end{lemma}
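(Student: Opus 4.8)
The plan is to convert the statement into an elementary fact about a pencil of two linear maps, and then into a single polynomial having a root. Put $W := gA_i = hA_i \subseteq A_{i+2}$. Multiplication by $g$ and by $h$ are $K$-linear surjections $A_i \twoheadrightarrow W$, each with kernel exactly $V$ by hypothesis, so each descends to an isomorphism $\bar g,\,\bar h : A_i/V \xrightarrow{\;\sim\;} W$. For $(\alpha,\beta)\in K^2$ and $g' := \alpha g + \beta h$, multiplication by $g'$ carries $A_i$ into $gA_i + hA_i = W$ and kills $V$ (since $gV = hV = 0$), so $\Ann_{A_i}(g')$ equals the kernel of the composition $A_i \twoheadrightarrow A_i/V \xrightarrow{\,\alpha\bar g + \beta\bar h\,} W$; in particular $V\subseteq\Ann_{A_i}(g')$ and $\dim_K\Ann_{A_i}(g') = \dim_K V + \dim_K\ker(\alpha\bar g + \beta\bar h)$. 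Hence it suffices to produce $(\alpha_0,\beta_0)\neq(0,0)$ for which $\alpha_0\bar g + \beta_0\bar h : A_i/V \to W$ --- a map between two $K$-spaces of the same finite dimension $r := \dim_K A_i - \dim_K V$ --- fails to be injective, equivalently fails to be bijective.

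Choosing $K$-bases of $A_i/V$ and of $W$, I would represent $\bar g,\,\bar h$ by $r\times r$ matrices $M_g,\,M_h$ over $K$, both invertible, and form the binary form $P(\alpha,\beta) := \det(\alpha M_g + \beta M_h)$, homogeneous of degree $r$. It is not identically zero since $P(1,0) = \det M_g \neq 0$, and it has positive degree as soon as $gA_i \neq 0$, i.e., $r\geq 1$; I take this for granted, since it holds in every application of the lemma and the conclusion is otherwise unattainable. A non-constant homogeneous polynomial in two variables has a nontrivial zero over the algebraic closure of $K$, and --- since the EGH conjecture is unaffected by extension of the base field, as Hilbert functions, the property of being a regular sequence, and lex-plus-powers ideals (being monomial) are all insensitive to such an extension --- we may assume $K$ is algebraically closed and therefore that $P(\alpha_0,\beta_0) = 0$ for some $(\alpha_0,\beta_0)\neq(0,0)$ in $K^2$. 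Taking $g' := \alpha_0 g + \beta_0 h$, which is a nonzero element of $Kg + Kh$ because $g,h$ are linearly independent in $A_2$ in the relevant setting, the vanishing $\det(\alpha_0 M_g + \beta_0 M_h) = 0$ forces $\ker(\alpha_0\bar g + \beta_0\bar h)\neq 0$, whence $\dim_K\Ann_{A_i}(g') \geq \dim_K V + 1$.

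Every step but one is routine: the Gorenstein hypothesis on $A$ is not actually used in this argument; the identity $\dim_K\Ann_{A_i}(g') = \dim_K V + \dim_K\ker(\alpha\bar g + \beta\bar h)$ uses only $g'A_i\subseteq W$ and $g'V = 0$; and the handling of $P$ is pure linear algebra. The one delicate point --- and the one I expect to be the real obstacle --- is the passage from ``$P$ has a root over $\bar K$'' to ``$P$ has a root in $K$'': over a base field that is not algebraically closed a binary form of degree $\geq 2$ can be anisotropic (for instance $\alpha^2 + \beta^2$ over $\mathbb{R}$), so one genuinely needs to have reduced to $K = \bar K$ beforehand, which is legitimate for the EGH problem but should be recorded explicitly.
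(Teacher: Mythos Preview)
Your argument is correct and is essentially the paper's own proof: the paper composes $\phi_h^{-1}\circ\phi_g$ to get an automorphism $T$ of $A_i/V$ and picks an eigenvector, which is exactly your search for $(\alpha_0,\beta_0)$ making $\alpha_0\bar g+\beta_0\bar h$ singular. You are right that the existence of an eigenvalue in $K$ requires $K$ to be algebraically closed (or at least that the relevant characteristic polynomial split); the paper uses this without comment, and your explicit reduction to $K=\bar K$ via insensitivity of Hilbert functions and regular sequences to base-field extension is the correct way to justify it.
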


\begin{proof} Consider the multiplication maps by $g$ and $h$,
\[\phi_g : A_i/V \to  gA_i \, \, \, \text{ and }   \, \, \,  \phi_h : A_i/V\to  hA_i\]
whose images $gA_i$, $hA_i$ are subspaces in $A_{i+2}$ and $gA_i=hA_i$ by assumption. Then there is a automorphism 
\[ T : A_i/V \to A_i/V\]
 such that  $g\ell = hT(\ell)$ for any $\ell\in A_i/V$. However, $T$ has at least one nonzero eigenvector $u$ with $T(u) = cu$ for some $c\in K$. Say $\ell_u$ be a form in degree $i$ represented by this eigenvector $u$ in $A_i$ and not in the annihilator space $V$, thus
$ g\ell_u = hc\ell_u$.  Then there is a quadratic form $g' := g-ch \in Kg + Kh - \{0\}$ such that  $g'$ annihilated by the space $V$ 
and also by $\ell_u \in A_i  \setminus V$. Hence $\dim_K \Ann_{A_i}(g') \geq \dim_K V +1$.
\end{proof}

From now on, $I = (f_1,\,\ldots,\,f_n,g,h) = \ci + (g, h)$ is a homogeneous ideal  where 
$\dim_K \big( \ci_3 \cap g'R_1 \big) \neq 2$ for a quadratic form $g' \in Kg + Kh - \{0\}$,  which means that
$ \dim_K g'A_1 \neq n-2$. Therefore $\dim_K g'A_1$ is either $n$ or $n-1$.  

\begin{proposition}\label{Prop-gA1=hA1} For the graded Gorenstein Artin $K$-algebra $A$,
if $gA_1=hA_1$ with $\dim_K gA_1=n-1=\dim_K hA_1$, that is $\dim_K(\ci_3 \cap gR_1)=\dim_K(\ci_3 \cap hR_1)  =1$, then $\EGH_{\sec2,n}(2)$ holds for the homogeneous defect two quadratic ideal $I =\ci + (g,h)$. 
\end{proposition}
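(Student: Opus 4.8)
The plan is to bypass Lemma~\ref{EGH(2)-cases}(i) and instead reduce the situation to the one already settled by Chen (Proposition~\ref{defect2-chen}). Observe first why a direct attack fails: the hypothesis $gA_1=hA_1$ forces $hR_1+\ci_3=gR_1+\ci_3=J_3$, so $hR_1\subseteq J_3$ and $\dim_K(J_3\cap hR_1)=\dim_K hR_1=n$; for $n\ge 5$ this exceeds $4$, so the criterion in Lemma~\ref{EGH(2)-cases}(i) can never be met. What rescues the argument is that $Kg+Kh$ is then forced to contain a form $g'$ with $\dim_K(\ci_3\cap g'R_1)=2$; replacing the generating pair $g,h$ by any pair $g',h'$ spanning $Kg+Kh$ leaves $I$ unchanged and places it in Chen's case.

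In detail, I would first reduce to $K=\overline K$: passing to a field extension alters neither the Hilbert function of $R/I$, nor the property that $f_1,\dots,f_n$ is a regular sequence, nor the lex-plus-powers data, and this ensures that the eigenvalue used in the proof of Lemma~\ref{lemma2} lies in the base field. Since $gA_1=hA_1$, Lemma~\ref{lemma1} gives $V:=\Ann_{A_1}(g)=\Ann_{A_1}(h)$, and $\dim_K V=n-\dim_K gA_1=1$. Now Lemma~\ref{lemma2} applied with $i=1$ (its hypotheses $gA_1=hA_1$ and $\Ann_{A_1}(g)=\Ann_{A_1}(h)=V$ are exactly what we have) yields a nonzero $g'\in Kg+Kh$ with $\dim_K\Ann_{A_1}(g')\ge\dim_K V+1=2$. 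Because $\ci$ contains no linear forms and multiplication by $g'$ is injective on $R_1$, the space $\{\ell\in R_1:g'\ell\in\ci_3\}$ has dimension $\dim_K\Ann_{A_1}(g')$ and is carried isomorphically onto $\ci_3\cap g'R_1$ by $\ell\mapsto g'\ell$; hence $\dim_K(\ci_3\cap g'R_1)\ge 2$. On the other hand $g'\notin\ci$ (as $g,h$ are linearly independent modulo $\ci$), so $\ci+(g')$ is an almost complete intersection of the kind in Theorem~\ref{EGH(d)-ci}, and Corollary~\ref{cor-intersection-dim} gives $\dim_K(\ci_3\cap g'R_1)\le 2$. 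Hence $\dim_K(\ci_3\cap g'R_1)=2$.

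To conclude, complete $g'$ to a basis $g',h'$ of the two-dimensional space $Kg+Kh$. Then $I=\ci+(g,h)=\ci+(g',h')$, and $f_1,\dots,f_n,g',h'$ are still $K$-linearly independent because their span coincides with that of $f_1,\dots,f_n,g,h$; thus $I$ is a defect two quadratic ideal on the generators $f_1,\dots,f_n,g',h'$ with $\dim_K(\ci_3\cap g'R_1)=2$, and Chen's Proposition~\ref{defect2-chen} gives $\EGH_{\sec2,n}(2)$ for $I$. (Equivalently, this shows the hypothesis $gA_1=hA_1$ with $\dim_K gA_1=n-1$ is incompatible with the standing assumption that $\dim_K(\ci_3\cap g''R_1)\le 1$ for every $g''\in Kg+Kh-\{0\}$, so no such ideal actually remains to be handled.) I expect the only genuinely delicate point to be the reduction to an algebraically closed field that legitimizes the use of Lemma~\ref{lemma2}; granting that, together with the harmless replacement of $g,h$ by $g',h'$, the rest is just a chain of the results already established.
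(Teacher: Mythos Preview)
Your proof is correct and follows essentially the same route as the paper: apply Lemma~\ref{lemma1} to get $\Ann_{A_1}(g)=\Ann_{A_1}(h)$, then Lemma~\ref{lemma2} to produce $g'\in Kg+Kh-\{0\}$ with $\dim_K\Ann_{A_1}(g')\ge 2$, and finish via Chen's Proposition~\ref{defect2-chen}. You are more careful than the paper on two points it leaves implicit --- the reduction to an algebraically closed field (needed for the eigenvalue step in Lemma~\ref{lemma2}) and the identification of $\Ann_{A_1}(g')$ with $\ci_3\cap g'R_1$ --- but the underlying argument is the same.
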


\begin{proof} Since $\dim_K \Ann_{A_1}(g)= \dim_K \Ann_{A_1}(h)=1$ there is some $g' \in Kg+Kh -\{0\}$ with $\dim_K \Ann_{A_i}(g') =2$ by Lemma \ref{lemma2}. In consequence, $\dim_K \big( \ci_3 \cap g'R_1\big) = 2$, and so we are done by Proposition \ref{defect2-chen}.
\end{proof}

\begin{proposition}\label{n-dim-space}  For the graded Gorenstein Artin $K$-algebra $A$, if $\dim_K  gA_1 =\dim_K hA_1 = n$,
then there exists a quadratic form $g'$ in $Kg+Kh$ with a nonzero linear annihilator in $A$.
\end{proposition}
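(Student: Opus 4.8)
The plan is to rephrase the conclusion as the failure of a multiplication map on $A_1$ to be injective, transport this across the Gorenstein pairing to the complementary degree, where the relevant maps become surjective, and then invoke the eigenvector argument of Lemma~\ref{lemma2} in that degree. A quadratic form $g' \in A_2$ has a nonzero linear annihilator in $A$ precisely when the multiplication map $\mu_{g'}\colon A_1 \to A_3$, $u \mapsto g'u$, is not injective; by hypothesis $\mu_g$ and $\mu_h$ \emph{are} injective, since their images $gA_1, hA_1$ have dimension $n = \dim_K A_1$. The perfect pairings $A_i \times A_{n-i} \to A_n \cong K$ attached to the Gorenstein algebra $A$ identify $\mu_{g'}\colon A_1 \to A_3$ with the transpose of $\mu_{g'}\colon A_{n-3} \to A_{n-1}$, so $\mu_{g'}$ is injective on $A_1$ if and only if it is surjective on $A_{n-3}$, i.e.\ if and only if $\dim_K \Ann_{A_{n-3}}(g') = \binom{n}{3} - n$. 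Hence it is enough to produce $g' \in Kg + Kh - \{0\}$ with $\dim_K \Ann_{A_{n-3}}(g') > \binom{n}{3} - n$. Applying the same duality to $g$ and $h$ themselves, their injectivity on $A_1$ translates into $gA_{n-3} = A_{n-1} = hA_{n-3}$, so in degree $n-3$ the forms $g$ and $h$ have the same image.

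With this in hand I would appeal to Lemma~\ref{lemma2} in degree $i = n-3$. Granting that $g$ and $h$ share an annihilator space $V = \Ann_{A_{n-3}}(g) = \Ann_{A_{n-3}}(h)$ in $A_{n-3}$ (whose dimension is forced to be $\binom{n}{3} - n$ by the previous paragraph), the lemma produces a form $g' \in Kg + Kh - \{0\}$ with $\dim_K \Ann_{A_{n-3}}(g') \ge \dim_K V + 1 = \big(\binom{n}{3} - n\big) + 1$. Then $\mu_{g'}$ fails to be surjective on $A_{n-3}$, hence fails to be injective on $A_1$, and therefore $g'$ has a nonzero linear annihilator in $A$, which is the assertion. (In the fully symmetric case $gA_1 = hA_1$ one may equally run Lemma~\ref{lemma2} in degree $1$ with $V = 0$.)

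The step that really has to be argued is that $g$ and $h$ do share an annihilator space in degree $n-3$. Under the Gorenstein pairing one computes $\Ann_{A_{n-3}}(g) = (gA_1)^{\perp}$ and $\Ann_{A_{n-3}}(h) = (hA_1)^{\perp}$ inside $A_{n-3}$, so this equality is exactly the condition $gA_1 = hA_1$, and the argument above disposes of that case at once. When $gA_1 \ne hA_1$ these are distinct $n$-dimensional subspaces of $A_3$, and the plan is to control $\dim_K (gA_1 \cap hA_1)$ — using Corollary~\ref{cor-intersection-dim} applied across the whole pencil $Kg+Kh$, Lemma~\ref{lemma1}, and the standing hypothesis that $\dim_K(\ci_3 \cap g'R_1) \ne 2$ for every $g'$ in the pencil — either tightly enough to get $\EGH_{\sec2,n}(2)$ for $I$ straight out of Lemma~\ref{EGH(2)-cases}, or so as to reduce back to the equal-image case. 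I expect the bookkeeping that reconciles the case $gA_1 \neq hA_1$ with the rest to be the main obstacle.
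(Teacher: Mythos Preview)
Your parenthetical remark --- run Lemma~\ref{lemma2} in degree~$1$ with $V=0$ when $gA_1=hA_1$ --- \emph{is} the paper's argument: since $\phi_g,\phi_h:A_1\to A_3$ are injective of rank~$n$, the paper forms $T=\phi_h^{-1}\circ\phi_g:A_1\to A_1$ and takes an eigenvector, obtaining $g\ell=ch\ell$ and hence $\ell\in\Ann_{A_1}(g-ch)$. The detour through Gorenstein duality to degree $n-3$ buys nothing, because (as you yourself observe) the hypothesis $\Ann_{A_{n-3}}(g)=\Ann_{A_{n-3}}(h)$ needed to invoke Lemma~\ref{lemma2} there is equivalent, via $(gA_1)^{\perp}=\Ann_{A_{n-3}}(g)$, to $gA_1=hA_1$; you land back at the same case split.

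The gap you flag for $gA_1\neq hA_1$ is genuine, and the paper's proof shares it: the composite $\phi_h^{-1}\circ\phi_g$ is only defined when $gA_1\subseteq hA_1$, and the paper simply asserts the existence of the automorphism $T$ without addressing this. In fact the proposition is false once that hypothesis is dropped. In $A=K[x_1,\ldots,x_5]/(x_1^2,\ldots,x_5^2)$ take $g=x_1x_2+x_3x_4$ and $h=x_1x_3+x_2x_5$; one checks $\dim_K gA_1=\dim_K hA_1=5$, yet for every $(a,b)\neq(0,0)$ the nine monomial coefficients of $(ag+bh)\ell$ in $A_3$ force $c_1=\cdots=c_5=0$, so no nonzero element of $Kg+Kh$ has a linear annihilator. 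Your ``main obstacle'' therefore cannot be resolved by bookkeeping: the route through Proposition~\ref{n-dim-space} must be replaced, e.g.\ by verifying $\dim_K(J_3\cap hR_1)\le 5$ directly in Lemma~\ref{EGH(2)-cases}(ii) (which is immediate when $n=5$, cf.\ Remark~\ref{remark-for-n=5}).
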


\begin{proof} 
By assumption $\dim_K  A_1 = \dim_K  gA_1 =\dim_K hA_1 = n$, and so we may consider again the multiplication maps $\phi_g : A_1 \to gA_1$ and $\phi_h: A_1\to hA_1$. Then we obtain a automorphism $T: A_1 \to A_1$ and there exists an nonzero linear form $\ell \in A_1$ such that $T(\ell) = c\ell$ for some $c\in K$, that is $g\ell = ch\ell$. Consider $g'= g-ch \in Kg+Kh$. Clearly, $\ell \in \Ann_{A_1}(g')$. 
\end{proof}

Next we assume that there is a linear annihilator $L\in A_1$ of $g$ where $Lh \neq 0$ over the Gorenstein ring $A=R/\ci$. This case may come up either when $\dim_K gA_1 = \dim_K hA_1 = n-1$ and the linear annihilator spaces 
$\Ann_{A_1}(g)$ and $\Ann_{A_1}(h)$ are distinct, or when  $\dim_K gA_1 = n-1$ and $\dim_K hA_1=n$. 

We shall make repeated use of the following result, which is Lemma 3.3 of  Chen's paper \cite{Chen}.  

\begin{lemma}[Chen \cite{Chen}]\label{Chen3.3}  If $\vect f n$ is a regular sequence of 2-forms in $R$ and  we have
a relation
$u_1f_1 + u_2f_2 + \cdots+ u_nf_n = 0$ for some 
$t$-forms $\vect u n$, then $\vect u n \in  (\vect fn)_t$. More precisely, we have that $t \geq 2$ and there exists a skew-symmetric  $n \times n$ matrix $B$ of $(t - 2)$-forms such that
$(u_1\, u_2\,  \cdots \, u_n) =  (f_1\, f_2 \,\cdots\, f_n)B$.  \qed \end{lemma}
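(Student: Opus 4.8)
The plan is to prove the lemma by induction on $n$, using only the defining property of a regular sequence together with an explicit construction of the skew-symmetric matrix $B$; the conceptual point is simply that a regular sequence of forms has no syzygies beyond the trivial Koszul ones, and a homogeneity count then pins down the degrees of the coefficients.

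First I would record the base case $n=1$: if $u_1 f_1 = 0$ and $f_1$ is a nonzerodivisor, then $u_1 = 0$, and the $1\times 1$ zero matrix is the required (vacuously skew-symmetric) $B$. For the inductive step, suppose $u_1f_1 + \cdots + u_nf_n = 0$ with the $u_i$ being $t$-forms and not all zero. Reducing modulo $(f_1,\ldots,f_{n-1})$ gives $\overline{u_n}\,\overline{f_n}=0$ in $R/(f_1,\ldots,f_{n-1})$; since $f_n$ is a nonzerodivisor there, $u_n \in (f_1,\ldots,f_{n-1})$ (and if $u_n=0$ one simply applies the inductive hypothesis to the sequence $f_1,\ldots,f_{n-1}$ and pads $B$ with a zero last row and column). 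Write $u_n = \sum_{j=1}^{n-1} b_{jn}f_j$; comparing degrees and using $\deg f_j = 2$, each $b_{jn}$ is a form of degree $t-2$, so in particular $t\geq 2$. Substituting back, $\sum_{i=1}^{n-1}\bigl(u_i + b_{in}f_n\bigr)f_i = 0$, a relation among the regular sequence $f_1,\ldots,f_{n-1}$ of $2$-forms whose coefficients are $t$-forms, so the inductive hypothesis produces a skew-symmetric $(n-1)\times(n-1)$ matrix $B'$ of $(t-2)$-forms with $(u_1+b_{1n}f_n,\ldots,u_{n-1}+b_{n-1,n}f_n) = (f_1\,\cdots\,f_{n-1})B'$.

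Then I would assemble $B$ from $B'$: put $B'$ in the upper-left $(n-1)\times(n-1)$ block, set the last column to $(b_{1n},\ldots,b_{n-1,n},0)^{\mathsf T}$ and the last row to $(-b_{1n},\ldots,-b_{n-1,n},0)$, so that $B$ is skew-symmetric with entries that are $(t-2)$-forms. A one-line check shows $(f_1\,\cdots\,f_n)B = (u_1\,\cdots\,u_n)$: in each of the first $n-1$ coordinates the term coming from the new last row, namely $-b_{kn}f_n$, cancels the correction $b_{kn}f_n$ introduced above, while the last coordinate reproduces $\sum_{j<n}b_{jn}f_j = u_n$. The relation $(u_1\,\cdots\,u_n)=(f_1\,\cdots\,f_n)B$ then shows each $u_i \in (f_1,\ldots,f_n)_t$, completing the induction.

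An alternative and slicker route is to invoke directly that the Koszul complex on a regular sequence is acyclic: exactness of $\bigwedge^2 R^n \xrightarrow{d_2} R^n \xrightarrow{d_1} R$, where $d_1(e_i)=f_i$, says precisely that the syzygy $(u_1,\ldots,u_n)$ lies in the image of $d_2$, hence is a combination $\sum_{i<j} c_{ij}(f_ie_j - f_je_i)$ with the $c_{ij}$ forms; setting $B_{ij}=c_{ij}$ for $i<j$, $B_{ji}=-c_{ij}$, $B_{ii}=0$ gives the skew-symmetric matrix, and homogeneity forces $\deg c_{ij}=t-2$. Either way, I expect the only real obstacle to be bookkeeping — keeping the signs and the indexing consistent so that $B$ stays skew-symmetric and $(f_1\,\cdots\,f_n)B$ equals $(u_1\,\cdots\,u_n)$ exactly — since the substantive input, that every relation is a Koszul combination, is just the regular-sequence case of acyclicity of the Koszul complex.
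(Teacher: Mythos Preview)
Your proof is correct. The paper does not supply its own argument for this lemma: it is quoted from Chen \cite{Chen} and closed immediately with a \qed, so there is no ``paper's proof'' to compare against. Both routes you outline --- the explicit induction on $n$ peeling off $u_n$ via the regular-sequence condition, and the direct appeal to acyclicity of the Koszul complex in homological degree~$1$ --- are standard and valid, and your degree bookkeeping (forcing $\deg b_{jn}=t-2$, hence $t\geq 2$ when the relation is nontrivial) is sound.
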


\begin{proposition}\label{distinct-linear-ann}
 Let $I = \ci+\fg$ be a defect $\delta$, where $2 \leq \delta \leq n-1$, quadratic ideal of $R$ as in Notation~\ref{notfg}. If there is a linear form $L$ in $\Ann_A(g_1,\,\ldots,\,g_{\delta-1})$ such that $Lg_{\delta} \neq 0$ in $A$, then
\[ \dim_K\big( (\vect fn, g_1,\,\ldots,\,g_{\delta-1})_3 \cap g_{\delta}R_1 \big) \leq 3\]
\end{proposition}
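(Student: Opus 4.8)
The plan is to analyze the intersection $W := (f_1,\dots,f_n,g_1,\dots,g_{\delta-1})_3 \cap g_\delta R_1$ by pulling it back along multiplication by $g_\delta$. Write $J' = \ci + (g_1,\dots,g_{\delta-1})$, so that $W = J'_3 \cap g_\delta R_1$. An element of $W$ has the form $g_\delta \ell$ with $\ell \in R_1$, and $g_\delta\ell \in J'_3$ exactly when the image of $g_\delta\ell$ vanishes in $(R/J')_3$. Equivalently, passing to $A = R/\ci$, we have $\dim_K W = \dim_K(g_\delta A_1 \cap (g_1,\dots,g_{\delta-1})A_1)$; and since the multiplication map $A_1 \to g_\delta A_1$ has kernel $\Ann_{A_1}(g_\delta)$, we get $\dim_K W = \dim_K\{\ell \in A_1 : g_\delta\ell \in (g_1,\dots,g_{\delta-1})A_1\} - \dim_K \Ann_{A_1}(g_\delta)$. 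So the task is to bound the dimension of the preimage $P := \{\ell \in A_1 : g_\delta \ell \in (g_1,\dots,g_{\delta-1})A_1\}$.

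The key step is to exploit the hypothesis via Chen's Lemma \ref{Chen3.3}. Suppose $\ell \in P$, so $g_\delta \ell = \sum_{i=1}^{\delta-1} g_i v_i$ in $A$ for some linear forms $v_i$; lifting to $R$, this reads $g_\delta \ell - \sum_{i=1}^{\delta-1} g_i v_i \in \ci_3$, i.e. there is a relation $g_\delta \ell - \sum_{i=1}^{\delta-1} g_i v_i = \sum_{j=1}^n u_j f_j$ with $u_j \in R_1$. Now multiply the whole relation by the linear annihilator $L$ of $g_1,\dots,g_{\delta-1}$: in $A$ we get $L g_\delta \ell = 0$, so $L g_\delta \ell \in \ci$, hence $(L\ell) \cdot g_\delta \in \ci$, i.e. $L\ell \in \Ann_{A_2}(g_\delta)$. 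Since $g_\delta A_1$ has dimension $n-1$ or $n$ (as recorded just before Proposition \ref{Prop-gA1=hA1}, we are in the case $\dim_K g_\delta A_1 \geq n-1$), Gorenstein duality for $A/\Ann_A(g_\delta) \cong g_\delta A(-2)$ forces $\Ann_{A_{n-2}}(g_\delta)$ — dually $\Ann_{A_2}(g_\delta)$ in the relevant symmetric position — to be small; concretely, the symmetric O-sequence computation in Lemma \ref{lemma1} shows $\dim_K \Ann_{A_2}(g_\delta) = \dim_K A_2 - \dim_K g_\delta A_2$, and this is a small explicit number (it equals $e_2 - e_4$-type data, at most a couple). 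The map $\ell \mapsto L\ell$ from $P$ into $\Ann_{A_2}(g_\delta)$ has kernel $\{\ell : L\ell = 0 \text{ in } A\} = \Ann_{A_1}(L)$, and since $Lg_\delta \neq 0$ means $L$ is not a zerodivisor killing $g_\delta$, one controls $\dim_K \Ann_{A_1}(L)$ as well (it is $1$ when $\dim_K LA_1 = n-1$, etc.). Assembling: $\dim_K P \leq \dim_K \Ann_{A_1}(L) + \dim_K \Ann_{A_2}(g_\delta)$, and then $\dim_K W = \dim_K P - \dim_K \Ann_{A_1}(g_\delta)$, which after bookkeeping yields $\dim_K W \leq 3$.

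The main obstacle I expect is the precise bookkeeping of the three annihilator dimensions and making sure the Gorenstein-symmetry bound on $\Ann_{A_2}(g_\delta)$ is applied in the correct degree: Lemma \ref{lemma1}'s O-sequence is written for a single form with $gA_1 = hA_1$, so here I would need the analogous self-contained computation of the Hilbert function of $g_\delta A$ (its O-sequence is symmetric, starting $0,0,1,\dim_K g_\delta A_1,\dots$) to read off $\dim_K A_2 - \dim_K g_\delta A_2$ exactly, and then verify the inequality $\dim_K \Ann_{A_1}(L) + (\dim_K A_2 - \dim_K g_\delta A_2) - \dim_K \Ann_{A_1}(g_\delta) \leq 3$ in each of the subcases ($\dim_K g_\delta A_1 = n-1$ with the two annihilator spaces distinct, versus $\dim_K g_\delta A_1 = n$). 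A secondary subtlety is checking that the relation-lifting argument genuinely lands $L\ell$ in $\Ann_{A_2}(g_\delta)$ and not merely in a larger space — but that is immediate once we work modulo $\ci$ and use $Lg_i = 0$ in $A$ for $i < \delta$.
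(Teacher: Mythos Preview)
Your opening reduction is on the right track, but there is a bookkeeping slip and then a genuine gap. The slip: since $R_1 \to g_\delta R_1$ is injective ($R$ is a domain) and $R_1 \cong A_1$, one actually has $\dim_K W = \dim_K P$, not $\dim_K P - \dim_K \Ann_{A_1}(g_\delta)$. The gap: your claim that $\dim_K \Ann_{A_2}(g_\delta)$ is ``at most a couple'' is false. Already for $n=5$ with $\dim_K g_\delta A_1 = 5$, the Gorenstein module $g_\delta A$ has symmetric Hilbert function $(1,5,5,1)$ in degrees $2$ through $5$, so $\dim_K g_\delta A_2 = 5$ and $\dim_K \Ann_{A_2}(g_\delta) = \binom{5}{2}-5 = 5$. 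Whatever bound you place on $\dim_K \Ann_{A_1}(L)$, the resulting inequality $\dim_K P \le \dim_K\Ann_{A_1}(L)+\dim_K\Ann_{A_2}(g_\delta)$ exceeds $\dim_K A_1$ and is vacuous. Factoring the map $\ell\mapsto L\ell$ through $\Ann_{A_2}(g_\delta)$ simply discards too much. (You also import the side hypothesis $\dim_K g_\delta A_1\ge n-1$, which is not part of the proposition, but even with it the count fails.)

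The fix is that your intermediate inclusion $P \subseteq \Ann_{A_1}(Lg_\delta)$ is already enough, provided you bound the right-hand side correctly: $Lg_\delta$ is a degree-$3$ form not in $\ci$, so Corollary~\ref{cor-intersection-dim} applied with $g=Lg_\delta$ and $d=3$ gives $\dim_K(\ci_4\cap (Lg_\delta)R_1)\le 3$, i.e.\ $\dim_K\Ann_{A_1}(Lg_\delta)\le 3$, and hence $\dim_K W=\dim_K P\le 3$. The paper does not quote Corollary~\ref{cor-intersection-dim} here; instead it re-derives this bound for the cubic $Lg_\delta$ by an explicit Koszul-syzygy argument: assuming four independent relations $x_iLg_\delta\in\ci$, it repeatedly applies the Koszul matrices on $(x_1,x_2,x_3,x_4)$ together with Chen's skew-symmetric Lemma~\ref{Chen3.3} to force $Lg_\delta\in\ci$, a contradiction.
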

\vspace*{2ex}

Chen \cite{Chen} used an argument involving the Koszul relations on $(x_1,\,\ldots,\,x_r)$ for $r\leq n$ while introducing another proof for Theorem \ref{EGH(d)-ci}. In the proof of this proposition we use a very similar argument. 

\begin{proof} As in Notation~\ref{notfg}, let $J = \ci+(\vect g {\delta-1})$, and denote the row vector of the regular sequence $\vect fn$ by $\vec{\bf f}$ and the row vector of quadratic forms $g_1,\,\ldots,\,g_{\delta-1}$ by  $\vec{\bf g}$.

\vspace*{2ex}

Suppose $\dim_K (J_3 \cap g_{\delta}R_1 )\geq 4$, and without loss of generality we may assume that 
\begin{eqnarray*} x_1 g_{\delta} &= \vec{\bf g} \cdot \vec{\ell_1} + \vec{\bf f}\cdot \vec{p_1} \\ 
                           x_2 g_{\delta} &= \vec{\bf g} \cdot \vec{\ell_2} + \vec{\bf f}\cdot \vec{p_2}  \\
                           x_3 g_{\delta} &= \vec{\bf g}\cdot \vec{\ell_3} + \vec{\bf f}\cdot \vec{p_3} \\
                           x_4 g_{\delta} &= \vec{\bf g} \cdot \vec{\ell_4} + \vec{\bf f}\cdot \vec{p_4} 
\end{eqnarray*}
where $\vec{\ell_i}$ and $\vec{p_i}$ are column vectors of linear forms of lengths $\delta-1$ and $n$, respectively. 

We assume that there is a linear form $L$ such that $Lg_i =0$ for each $i=1,\,\ldots,\,\delta-1$ but  $Lg_{\delta} \neq 0$ in $A$.  Then we get am $n\times (\delta-1)$ matrix $(q_{i,j}) = \begin{pmatrix}\vec{q_1} & \vec{q_2} & \cdots & \vec{q}_{\delta-1}\end{pmatrix} $ of linear forms such that
                               \[ L\vec{\bf g} = \vec{\bf f}\cdot (q_{i,j}). \] 

We observe that each $x_i Lg_{\delta} $ is in $\ci$, and write $x_i Lg_{\delta} =  \vec{\bf f}\cdot \vec{Q_i}$ where $\vec{Q_i}$ is a column of quadratic forms for $i = 1,2,3,4$.  Therefore:
\begin{eqnarray}\label{step1}
Lg_{\delta} \begin{pmatrix}  x_1  &  x_2  &  x_3 & x_4 \end{pmatrix} = \vec{\bf f}\cdot \begin{pmatrix}  \vec{Q_1}  & \vec{Q_2}  &  \vec{Q_2} & \vec{Q_4} \end{pmatrix}.
\end{eqnarray}

\vspace*{2ex}

Let $M_1 = \begin{pmatrix}
x_2 & x_3 & x_4 & 0 & 0 & 0 \\
-x_1 & 0 & 0 & x_3 & x_4 & 0 \\
0 & -x_1 & 0 & -x_2 & 0 & x_4 \\
0  & 0 & -x_1 & 0 & -x_2 & -x_3
\end{pmatrix}$.  Note that $\begin{pmatrix}  x_1  &  x_2  &  \cdots & x_4\end{pmatrix}\cdot M_1 = 0$. Multiplying the equation (\ref{step1})
by $M_1$ from right gives that   $\vec{\bf f}\cdot(\vec Q_1\ \vec Q_2\ vec Q_3\ \vec Q_4) = 0$, and so all entries are 0 in
\begin{eqnarray*}
\vec{\bf f}  \begin{pmatrix}  x_2\vec{Q_1}-x_1\vec{Q_2}  &  x_3\vec{Q_1}-x_1\vec{Q_3}  & x_4\vec{Q_1}-x_1\vec{Q_4}  & x_3\vec{Q_2}-x_2\vec{Q_3}  & x_4\vec{Q_2}-x_2\vec{Q_4}  & x_4\vec{Q_3}-x_3\vec{Q_4}\end{pmatrix}
 \end{eqnarray*}
 
By Lemma~\ref{Chen3.3}, there are alternating $n\times n$ matrices $B_{12}, B_{13}, B_{14}, B_{23}, B_{24}, B_{34}$ of \underline{linear} forms such that
\begin{eqnarray} \label{step2}
 \Big( \underbrace{x_2\vec{Q_1}-x_1\vec{Q_2}}_{\tiny\begin{matrix}\text{a column vector} \\ \text{ of cubic forms}\end{matrix}} \, \,  \cdots \, \,  x_4\vec{Q_3}-x_3\vec{Q_4}\Big) = \begin{pmatrix}
 B_{12}\vec{\bf f}^T & \cdots & B_{34}\vec{\bf f}^T  \end{pmatrix}  \end{eqnarray}

Similarly,  consider the matrix $M_2 = \begin{pmatrix}
x_3 & x_4  & 0 & 0 \\
-x_2 & 0 & x_4  & 0 \\
0 & -x_2 & -x_3 & 0\\
x_1 & 0 &  0 & x_4 \\
0  & x_1 & 0 & -x_3 \\
0 & 0 & x_1 & x_2
\end{pmatrix}$
 such that  $M_1\cdot M_2 = {\bf 0}$ and multiply equation (\ref{step2}) by $M_2$ from right to obtain:
 \[\Big(
 \underbrace{(x_3B_{12}-x_2B_{13}+x_1B_{23})}_{\tiny\begin{matrix} n\times n \, \text{matrix of} \\ \text{quadratic forms} \end{matrix}}\vec{\bf f}^T  \, \, \cdots \, \,  (x_4B_{23}-x_3B_{24}+x_2B_{34})\vec{\bf f}^T 
 \Big) ={\bf 0}.\]
Then again by Lemma~\ref{Chen3.3}, there are alternating $n\times n$ matrices 
$$C^{123}_1,\,\ldots,\,C^{123}_n,\,C^{124}_1,\,\ldots,\,C^{124}_n,\,  \ldots,\,C^{234}_1,\,\ldots,\,C^{234}_n$$ of \underline{scalars} such that  

\begin{align}
\label{step 3}
\begin{split}
 x_3B_{12}-x_2B_{13}+x_1B_{23} = &  \begin{pmatrix} \vec{\bf f}C^{123}_1 \\ \vdots \\ \vec{\bf f}C^{123}_n \end{pmatrix}\\
 x_4B_{12}-x_2B_{14}+x_1B_{24} = &  \begin{pmatrix} \vec{\bf f}C^{124}_1 \\ \vdots \\ \vec{\bf f}C^{124}_n \end{pmatrix}\\
x_4B_{13}-x_3B_{14}+x_1B_{34} = &  \begin{pmatrix} \vec{\bf f}C^{134}_1 \\ \vdots \\ \vec{\bf f}C^{134}_n \end{pmatrix}\\
   x_4B_{23}-x_3B_{24}+x_2B_{34} = &\begin{pmatrix} \vec{\bf f}C^{234}_1 \\ \vdots \\ \vec{\bf f} C^{234}_n \end{pmatrix}
\end{split}   
\end{align}
Repeating the previous steps with $M_3 = \begin{pmatrix} x_4\\-x_3 \\ x_2 \\ -x_1\end{pmatrix}$, so that $M_2\cdot M_3 = {\bf 0}$, we get
\[ {\bf 0} = \begin{pmatrix}  B_{12}& B_{13} & B_{14} & B_{23} & B_{24} & B_{34} \end{pmatrix} M_2 M_3 =  \begin{pmatrix}
\vec{\bf f} C^{123}_1 & \vec{\bf f} C^{124}_1 & \vec{\bf f} C^{134}_1  & \vec{\bf f}C^{234}_1 \\
\vdots                       &         \vdots                &            \vdots              &       \vdots                 \\
  \vec{\bf f} C^{123}_n & \vec{\bf f}C^{124}_n & \vec{\bf f}C^{134}_n  & \vec{\bf f} C^{234}_n 
\end{pmatrix} M_3\]
and then for all $i = 1,2,\,\ldots,\,n$ we obtain
\[ \vec{\bf f}( x_4C^{123}_i - x_3 C^{124}_i +x_2C^{134}_i - x_1C^{234}_i) = 0. \]
Then, finally, $x_4C^{123}_i - x_3 C^{124}_i +x_2C^{134}_i - x_1C^{234}_i = 0 $ for all $i = 1,2,...,n$. Hence, 
\[C^{123}_i = C^{124}_i = C^{134}_i = C^{234}_i = 0 \,\, \text{ for all } \,\, i = 1,2,...,n.\]

Thus, in (\ref{step 3}) we get $x_3B_{12}-x_2B_{13}+x_1B_{23} = 0 $. This shows that $x_3$ divides every entry in $x_2B_{13}-x_1B_{23}$. Therefore we may rewrite $B_{13} = x_3\widetilde{B_{13}} + D_{13}$ and $B_{23} = x_3\widetilde{B_{23}} + D_{23}$, where $\widetilde{B_{13}}$ and  $\widetilde{B_{23}}$ are alternating matrices of scalars,  $D_{13}$ and  $D_{23}$ are  alternating matrices of linear forms that do not contain $x_3$, and 
$x_2D_{13} - x_1D_{23} =0$.  We obtain the following 
\[B_{12} =\frac{1}{x_3}(x_2B_{13}-x_1B_{23}) = x_2\widetilde{B_{13}}-x_1\widetilde{B_{23}}\]
Returning to equation (\ref{step2}), we obtain 
$ x_2 \vec{Q_1} - x_1\vec{Q_2} = B_{12} \vec{\bf f}^T = (x_2\widetilde{B_{13}}-x_1\widetilde{B_{23}}) \vec{\bf f}^T$. 
Consequently,
\[x_1 \big(  \vec{Q_2} - \widetilde{B_{23}}\vec{\bf f}^T\big) = x_2 \big(  \vec{Q_1} - \widetilde{B_{13}} \vec{\bf f}^T\big)  \]
which tells us that $x_1$ divides  every entry of $\vec{Q_1} - \widetilde{B_{13}}\vec{\bf f}^T$.  It follows that

\begin{align*}
 \vec{\bf f}\big( \vec{Q_1} - \widetilde{B_{13}}\vec{\bf f}^T\big)  &=  \vec{\bf f}\vec{Q_1}  \hspace*{4ex} \text{ as  $\widetilde{B_{13}}$ is alternating and  $\vec{\bf f}\widetilde{B_{13}}\vec{\bf f}^T =0$} \\
 &= x_1 Lg_{\delta}  \hspace*{4ex} \text{ by equation } (\ref{step1}).
\end{align*}
This shows that  $Lg_{\delta} = \vec{\bf f} \frac{1}{x_1}\Big(\vec{Q_1} -\widetilde{B_{13}}\vec{\bf f}^T \Big) \in 
(\vect fn )_3$, which contradicts our assumption $L \notin \Ann_{A}(g_{\delta})$.
\end{proof}
%
%
%
%
\begin{corollary}\label{EGH2n2} Let $I = \ci+\fg \subseteq R$ be a defect $\delta$ quadratic ideal with $2\leq\delta\leq n-1$.
Suppose that $$(\dagger) \quad  \Ann_{A_1}(g_1,\,\ldots,\,g_{\delta-1}, g_\delta) \setminus \Ann_{A_1}(g_\delta) \neq \emptyset.$$
Then 
\[\dim_K I_3 \geq  \dim_K \lpp_3 \]
where $\lpp = (x_1^2,\,\ldots,\,x_n^2) + ( x_1x_2, x_1x_3,\,\ldots,\,x_1x_{\delta+1})$ is the defect $\delta$ lex-plus-powers ideal of $R$.
\vspace*{2ex}
That is, $\EGH_{{\bf 2}, n}(2)$ holds for any defect $\delta$ quadratic ideal with property $(\dagger)$.
 \end{corollary}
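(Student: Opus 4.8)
The plan is to strip off the last generator $g_\delta$ and pass to the defect $\delta-1$ quadratic ideal $J=\ci+(\vect g{\delta-1})$ of Notation~\ref{notfg}. Note first that $J$ is genuinely a defect $\delta-1$ ideal: $\vect fn,\vect g\delta$ are $K$-linearly independent quadrics and $I$ is nondegenerate, so any subfamily is again a minimal generating set of the ideal it generates. Since $R$ is a domain and $g_\delta\neq0$, multiplication by $g_\delta$ embeds $R_1$ into $R_3$, so $\dim_K g_\delta R_1=n$; moreover $I_3=J_3+g_\delta R_1$, whence
\[\dim_K I_3=\dim_K J_3+n-\dim_K\big(J_3\cap g_\delta R_1\big).\]
The hypothesis $(\dagger)$ supplies a linear form $L$ with $Lg_i=0$ in $A$ for $1\le i\le\delta-1$ and $Lg_\delta\neq0$ in $A$, which is exactly the hypothesis of Proposition~\ref{distinct-linear-ann}; as $2\le\delta\le n-1$, that proposition gives $\dim_K(J_3\cap g_\delta R_1)\le3$, so
\[\dim_K I_3\ \ge\ \dim_K J_3+n-3.\]

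Next I would bound $\dim_K J_3$ from below by $\EGH_{\sec2,n}(2)$ applied to the smaller-defect ideal $J$. Writing $\lpp'=(x_1^2,\dots,x_n^2)+(x_1x_2,\dots,x_1x_\delta)$ for the defect $\delta-1$ lex-plus-powers ideal, this is the inequality $\dim_K J_3\ge\dim_K\lpp'_3$. When $\delta=2$ the ideal $J$ is an almost complete intersection and this is precisely Francisco's Theorem~\ref{EGH(d)-ci} with $d=2$; for $\delta\ge3$ one argues by induction on $\delta$, using this corollary for defect $\delta-1$ ideals satisfying $(\dagger)$ together with the remaining cases of $\EGH_{\sec2,n}(2)$ treated in this section (Propositions~\ref{Prop-gA1=hA1} and~\ref{n-dim-space} and Remark~\ref{defect2-chen}). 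Organizing this inductive bookkeeping — in particular guaranteeing that $J$ is covered by the lower-defect results even when $J$ itself fails $(\dagger)$ — is the one genuinely delicate point; everything else is formal. For the intended application $\delta=2$ the matter does not arise, since there $J$ is an almost complete intersection.

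It remains to compare the two lex-plus-powers dimensions by a monomial count. A degree-$3$ monomial lies in $\lpp=(x_1^2,\dots,x_n^2)+(x_1x_2,\dots,x_1x_{\delta+1})$ exactly when it is divisible by some $x_i^2$ — there are $n+n(n-1)=n^2$ such — or it is a squarefree $x_1x_ax_b$ with $2\le a<b\le n$ and $a\le\delta+1$ — there are $\binom{n-1}{2}-\binom{n-1-\delta}{2}$ such — so $\dim_K\lpp_3=n^2+\binom{n-1}{2}-\binom{n-1-\delta}{2}$; the identical count gives $\dim_K\lpp'_3=n^2+\binom{n-1}{2}-\binom{n-\delta}{2}$, hence $\dim_K\lpp_3-\dim_K\lpp'_3=\binom{n-\delta}{2}-\binom{n-1-\delta}{2}=n-1-\delta$. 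Therefore
\[\dim_K I_3\ \ge\ \dim_K\lpp'_3+n-3\ =\ \dim_K\lpp_3+(\delta-2)\ \ge\ \dim_K\lpp_3,\]
using $\delta\ge2$ in the last step. Finally $\dim_K I_2=n+\delta=\dim_K\lpp_2$ as well, so Lemma~\ref{equal-EGH(d)} upgrades the inequality $\dim_K I_3\ge\dim_K\lpp_3$ to an honest lex-plus-powers ideal having the same Hilbert function as $I$ in degrees $2$ and $3$; that is, $\EGH_{\sec2,n}(2)$ holds for $I$.
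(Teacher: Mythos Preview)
Your approach is essentially the paper's: strip off $g_\delta$, bound $\dim_K(J_3\cap g_\delta R_1)\le 3$ via Proposition~\ref{distinct-linear-ann}, induct on $\delta$, and compare lex-plus-powers dimensions. The paper states the formula $\dim_K\lpp_3=n^2+n\delta-\tfrac{\delta(\delta+3)}{2}$ directly (your monomial count gives the same number) and then simply asserts the inductive bound $\dim_K J_3\ge n^2+(\delta-1)n-\tfrac{(\delta-1)(\delta+2)}{2}$ without checking that $J$ itself satisfies any form of $(\dagger)$; you are right that this is the delicate point, and the paper glosses over it just as you describe. In practice only the case $\delta=2$ is used downstream (Corollary~\ref{Cor-diff-linear-ann}), where $J$ is an almost complete intersection and Francisco's Theorem~\ref{EGH(d)-ci} gives the needed bound outright, so the issue never bites.
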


 \begin{proof}  Notice that $\dim_K \mathcal{L}_3  = n^2+ n\delta -\frac{\delta(\delta+3)}{2}$.
 We use induction on $\delta$. Let $J= \ci+(g_1,\,\ldots,\,g_{\delta-1})$ be the defect $\delta-1$ quadratic ideal.
 
\begin{align*}\dim_K I_3 &= \dim_K J_3 + n - \dim_K\big( J_3 \cap g_\delta R_1 \big)  \\
& \geq \Big( n^2+(\delta-1)n - \frac{(\delta-1)(\delta+2)}{2} \Big) + n - 3  =  n^2+ n\delta -\frac{\delta(\delta+3)}{2} +\delta-3  \\
& \geq n^2+ n\delta -\frac{(\delta)(\delta+3)}{2}.
\end{align*}
\end{proof}

We notice that a special case of Corollary~\ref{EGH2n2} when $\delta=2$ shows that the inequality is strict.
\begin{corollary} \label{Cor-diff-linear-ann}Let $I = \ci+ (g,h) $ be a defect two ideal generated by quadrics in $R$. 
If $\Ann_{A_1}(g) = \Span\{L\}$ for some $L\in R_1$ and  $L$ does not annihilate $h$ in $A = R/\ci$, then 
$$\dim_{K} I _3 \geq n^2+2n-4 > \dim_{K}(x_1^2,\,\ldots,\,x_n^2, x_1x_2, x_1x_3)_3$$
\end{corollary}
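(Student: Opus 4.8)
The plan is to deduce this corollary directly from Proposition~\ref{distinct-linear-ann} and part (i) of Lemma~\ref{EGH(2)-cases}, together with the dimension bookkeeping already set up in the excerpt. Concretely, the hypothesis $\Ann_{A_1}(g) = \Span\{L\}$ says exactly that $\dim_K \Ann_{A_1}(g) = 1$, i.e. $\dim_K gA_1 = n-1$, which by the translation recorded before Remark~\ref{defect2-chen} means $\dim_K(\ci_3 \cap gR_1) = 1$. Thus we are in case (i) of Lemma~\ref{EGH(2)-cases}, and $\dim_K I_3 = n^2 + 2n - 1 - \dim_K(J_3 \cap hR_1)$ where $J = \ci + (g)$ is the defect one quadratic ideal. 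So everything reduces to bounding $\dim_K(J_3 \cap hR_1)$ from above.

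For that bound I would invoke Proposition~\ref{distinct-linear-ann} with $\delta = 2$, $g_1 = g$, $g_2 = h$. Its hypothesis is precisely that there is a linear form $L \in \Ann_A(g)$ with $Lh \neq 0$ in $A$, which is exactly the assumption of the corollary (since $\Ann_{A_1}(g) = \Span\{L\}$ and $L$ does not annihilate $h$). The conclusion of Proposition~\ref{distinct-linear-ann} then gives
\[
\dim_K\big( J_3 \cap hR_1 \big) = \dim_K\big( (\vect fn, g)_3 \cap hR_1 \big) \leq 3.
\]
Substituting into the formula from Lemma~\ref{EGH(2)-cases}(i) yields $\dim_K I_3 \geq n^2 + 2n - 1 - 3 = n^2 + 2n - 4$, which is the claimed inequality.

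It remains only to check the comparison with the LPP side, namely that $n^2 + 2n - 4 > \dim_K(x_1^2,\,\ldots,\,x_n^2, x_1x_2, x_1x_3)_3$. By the computation in the proof of Corollary~\ref{EGH2n2} (or by direct count), $\dim_K \lpp_3 = n^2 + n\delta - \frac{\delta(\delta+3)}{2}$ with $\delta = 2$ gives $n^2 + 2n - 5$; hence $n^2 + 2n - 4 = \dim_K \lpp_3 + 1 > \dim_K \lpp_3$. This confirms that $\EGH_{\sec2,n}(2)$ holds for such an ideal $I$ with strict inequality, exactly as stated.

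I do not anticipate a genuine obstacle here: this corollary is a bookkeeping consequence of the two main technical results already established. The only point requiring a moment of care is the correct identification of which case of Lemma~\ref{EGH(2)-cases} applies — one must note that $\dim_K gA_1 = n-1$ forces $\dim_K(\ci_3 \cap gR_1) = 1$ rather than $0$ or $2$ — and the observation that the hypothesis of the corollary is verbatim the hypothesis needed to apply Proposition~\ref{distinct-linear-ann} in the case $\delta = 2$. The numerical comparison $n^2+2n-4 > n^2+2n-5$ is immediate.
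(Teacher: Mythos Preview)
Your proof is correct and follows essentially the same route as the paper: both use the dimension formula from Lemma~\ref{EGH(2)-cases} together with $\dim_K(\ci_3 \cap gR_1) = \dim_K \Ann_{A_1}(g) = 1$, and then invoke Proposition~\ref{distinct-linear-ann} with $\delta = 2$ to bound $\dim_K(J_3 \cap hR_1) \leq 3$. The only cosmetic difference is that the paper writes the formula in the unsplit form $\dim_K I_3 = n^2+2n - \dim_K(\ci_3\cap gR_1) - \dim_K(J_3\cap hR_1)$ rather than first passing through case~(i).
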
 

\begin{proof} The result follows from Proposition \ref{distinct-linear-ann} as
\[\dim_{K} I _3  = n^2+2n- \underbrace{\dim_K (\ci_3 \cap gR_1)}_{=\dim_K \Ann_{A_1}(g)=1}  - \underbrace{\dim_K (J_3 \cap hR_1 )}_{\leq 3}\] 
which is $\geq n^2+2n-4$.

\end{proof}

Finally, we give an affirmative answer to the Main Question \ref{main-question}.

\begin{theorem}\label{thm-defect2-egh(2)} Let $I = \ci+ (g,h)\subseteq R=K[x_1,\,\ldots,\,x_n]$ for $n\geq 5$ be a defect two ideal quadratic ideal. Then
\[\dim_K I_3 \geq n^2+2n-5.\]

More precisely, $\EGH_{\sec2,n}(2)$ holds for homogeneous defect two quadratic ideals in $R$ for any $n\geq 5$. 
\end{theorem}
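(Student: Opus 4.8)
The plan is to reduce, case by case, to situations already handled in this section, using the auxiliary results above. Since Hilbert functions, the containment of a quadratic regular sequence, and each dimension $\dim_K(\ci_3\cap g'R_1)$ are all unchanged under extension of the ground field, I may assume $K$ is algebraically closed. By Lemma~\ref{EGH(2)-cases} it suffices to order the two extra generators as $g$ then $h$ so that $\dim_K(\ci_3\cap gR_1)+\dim_K(J_3\cap hR_1)\le 5$, where $J=\ci+gR$; and since $\dim_K(\ci_3\cap g'R_1)=\dim_K\Ann_{A_1}(g')\le 2$ for every quadric $g'$ by Corollary~\ref{cor-intersection-dim}, Remark~\ref{defect2-chen} disposes of the case where some $g'\in Kg+Kh-\{0\}$ has $\dim_K\Ann_{A_1}(g')=2$. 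So from now on I assume $\dim_K\Ann_{A_1}(g')\le 1$, equivalently $\dim_K g'A_1\in\{n-1,n\}$, for every $g'\in Kg+Kh-\{0\}$.

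Under this hypothesis the subspaces $\Ann_{A_1}(g')$, as $g'$ runs over $Kg+Kh-\{0\}$, cannot all be zero: otherwise Proposition~\ref{n-dim-space} applied to a basis of $Kg+Kh$ would supply a nonzero $g'$ with $\Ann_{A_1}(g')\ne 0$. Hence either these subspaces are not all equal, or they all equal one line $\Span\{L\}$. If they are not all equal, choose a basis $\{g_1,g_2\}$ of $Kg+Kh$ with $\Ann_{A_1}(g_1)\ne\Ann_{A_1}(g_2)$ and, after relabelling, with $\Ann_{A_1}(g_1)=\Span\{L\}$ for some nonzero $L$; then $\dim_K\Ann_{A_1}(g_2)\le 1$ and $\Ann_{A_1}(g_2)\ne\Span\{L\}$ force $Lg_2\ne 0$ in $A$, and Corollary~\ref{Cor-diff-linear-ann} yields $\dim_K I_3\ge n^2+2n-4>n^2+2n-5$. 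If instead all these subspaces equal a common $V$, then $V\ne 0$ cannot be avoided (Proposition~\ref{n-dim-space} again), so $V=\Span\{L\}$, whence $\dim_K gA_1=\dim_K hA_1=n-1$ and $\Ann_{A_1}(g)=\Ann_{A_1}(h)=\Span\{L\}$; moreover if $gA_1=hA_1$ we conclude by Proposition~\ref{Prop-gA1=hA1}. The only case left is: $\Ann_{A_1}(g)=\Ann_{A_1}(h)=\Span\{L\}$ with $\dim_K gA_1=\dim_K hA_1=n-1$ but $gA_1\ne hA_1$; I call this the \emph{critical case}.

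In the critical case, Lemma~\ref{EGH(2)-cases}(i) reduces the theorem to the inequality $\dim_K(J_3\cap hR_1)\le 4$. Now $J_3\cap hR_1$ has dimension equal to that of $\{\,b\in A_1:hb\in gA_1\,\}$, which is $\mu_h^{-1}(gA_1\cap hA_1)$ for the multiplication map $\mu_h\colon A_1\to A_3$; since $\ker\mu_h=\Span\{L\}$ and $gA_1\cap hA_1\subseteq hA_1=\operatorname{im}\mu_h$, this dimension equals $1+\dim_K(gA_1\cap hA_1)$. So the critical case comes down to $\dim_K(gA_1\cap hA_1)\le 3$. When $n=5$ this is automatic, since $gA_1$ and $hA_1$ are distinct $4$-dimensional subspaces of the $\binom{5}{3}$-dimensional space $A_3$ and hence meet in dimension at most $3$. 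For $n\ge 6$ one must work harder. Here the key structural input is that $Lg\in\ci_3$ and $Lh\in\ci_3$; choosing coordinates so that $L=x_n$ (which only replaces $\ci$ by another quadratic regular sequence) gives the two ``annihilation'' relations $x_ng=\vec{\bf f}\!\cdot\!\vec p$ and $x_nh=\vec{\bf f}\!\cdot\!\vec q$ with linear $\vec p,\vec q$, while assuming $\dim_K(gA_1\cap hA_1)\ge 4$ produces, after a further coordinate change among $x_1,\dots,x_{n-1}$, relations $x_kh=g\,a_k+\vec{\bf f}\!\cdot\!\vec c_k$ for $k=1,2,3,4$. Running a Koszul cascade on $(x_1,x_2,x_3,x_4)$ of the kind used in the proof of Proposition~\ref{distinct-linear-ann}, now feeding in \emph{both} annihilation relations and invoking Lemma~\ref{Chen3.3} at each stage, is expected to force a linear form not proportional to $x_n$ into $\Ann_{A_1}(h)$, contradicting $\Ann_{A_1}(h)=\Span\{x_n\}$. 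Carrying this cascade through --- i.e.\ sharpening the a-priori bound $\dim_K(gA_1\cap hA_1)\le n-2$ coming from a bare count of dimensions down to the required $3$ for all $n\ge 6$ --- is the step I expect to be the main obstacle, with the bookkeeping of the skew scalar and linear matrices in the cascade the most delicate point.

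Assembling the cases yields $\dim_K I_3\ge n^2+2n-5=\dim_K\lpp_3$, with equality only in the critical case; by Lemma~\ref{equal-EGH(d)} this is exactly the assertion that $\EGH_{\sec2,n}(2)$ holds for defect two quadratic ideals in $R$ for all $n\ge 5$, which in particular answers Main Question~\ref{main-question} in the affirmative.
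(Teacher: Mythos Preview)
Your case analysis tracks the paper's strategy closely and is in fact more thorough. After ruling out Chen's case and reducing (via passage to the algebraic closure, which the paper uses implicitly for its eigenvector arguments) to $\dim_K\Ann_{A_1}(g')\le 1$ for all nonzero $g'\in Kg+Kh$, both you and the paper use Proposition~\ref{n-dim-space} to guarantee some $g'$ with a nonzero linear annihilator, then Corollary~\ref{Cor-diff-linear-ann} whenever the annihilator lines differ, and finally Proposition~\ref{Prop-gA1=hA1} for the remaining configuration. The paper's proof stops there; you correctly observe that Proposition~\ref{Prop-gA1=hA1} requires $gA_1=hA_1$, not merely $\Ann_{A_1}(g)=\Ann_{A_1}(h)$, and you isolate the residual ``critical case''. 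Your computation $\dim_K(J_3\cap hR_1)=1+\dim_K(gA_1\cap hA_1)$ is correct, and since for $n=5$ two distinct $4$-dimensional subspaces of the $10$-dimensional space $A_3$ meet in dimension at most $3$, your argument is complete when $n=5$---which is all that Section~4 needs.

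For $n\ge 6$ your proof has a genuine gap: you do not carry out the proposed Koszul cascade, and it is not evident that the mechanism of Proposition~\ref{distinct-linear-ann} transfers. In that proof the contradiction arose from $Lg_\delta\notin\ci$; in your critical case both $Lg$ and $Lh$ lie in $\ci$ by hypothesis, so there is no obvious forbidden containment to violate at the end of the cascade. The relations $x_kh\equiv a_kg\pmod{\ci}$ introduce an extra tier of unknown linear forms $a_k$ that the cascade must control simultaneously with the $\ci$-coefficients, and four such relations are not obviously enough to force a second linear annihilator of $g$ or of $h$. The paper's own proof offers nothing further here either (it invokes Proposition~\ref{Prop-gA1=hA1} without verifying $gA_1=hA_1$), so your explicit isolation of the critical case is a real clarification; but as written, neither your argument nor the paper's establishes the theorem for general $n\ge 6$.
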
 

\begin{proof}
If the given defect two ideal satisfies Proposition \ref{defect2-chen} , then, by Chen's result, the theorem is proved.  

Assume that $\dim_K \big( \ci_3 \cap g'R_1\big) \neq 2$ for any $g'\in Kg+Kh \setminus \{0\}$. If $\dim_K \big(\ci_3 \cap gR_1\big) = \dim_K\big(\ci_3 \cap hR_1\big)=0$, by 
Proposition \ref{n-dim-space}, we can always find another quadratic form $g' \in Kg+Kh \setminus \{0\} $ so that $g'$ has a linear annihilator in $A$. Then we can apply Corollary \ref{Cor-diff-linear-ann}. If $\dim_K \big(\ci_3 \cap gR_1\big) = \dim_K\big(\ci_3 \cap hR_1\big)=1$ and the same linear form annihilates both $g$ and $h$ in $A$, by Proposition 
\ref{Prop-gA1=hA1}.
we have a situation contradicts our assumption. \end{proof}

\begin{corollary}\label{cor-EGH(2)}$\EGH_{\sec2,n}(2)$ holds for every defect two ideal containing a regular sequence of quadratic forms.
\end{corollary}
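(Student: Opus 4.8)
The plan is to consolidate the two ingredients the section has assembled, after a routine reduction. By definition a defect two ideal is minimally generated by $n+2$ forms and contains a length-$n$ regular sequence, here the quadric regular sequence $f_1,\dots,f_n$; write $I = \ci + (g,h)$ with $g,h$ the remaining minimal generators. First I would treat the case $\deg g = \deg h = 2$, in which $I$ is a defect two quadratic ideal. For $n \ge 5$ the desired conclusion --- that $\dim_K I_2 = n+2$ and $\dim_K I_3 \ge n^2+2n-5$, i.e.\ $\EGH_{\sec2,n}(2)$ holds for $I$ --- is precisely Theorem~\ref{thm-defect2-egh(2)}. For $2 \le n \le 4$ it follows from Chen's theorem, which establishes the full conjecture $\EGH_{\sec2,n}$ and hence, by Lemma~\ref{CM-lemma}, $\EGH_{\sec2,n}(d)$ for every $d$, in particular $d = 2$. (For $n = 2$ this case is vacuous, since $\dim_K R_2 = 3 < 4$ leaves no room for $n+2 = 4$ independent quadrics, while $n = 3,4$ are genuinely covered by Chen.) This disposes of the main case for all $n \ge 2$.

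It remains to allow $g$ or $h$ to have degree $\ge 3$. Then, in degrees $2$ and $3$, the ideal $I$ coincides with the complete intersection $\ci$ (if both of $g,h$ have degree $\ge 4$) or with an almost complete intersection generated in degrees $\le 2$ (if exactly one of them is a quadric), possibly further enlarged by the adjunction of at most two cubic minimal generators. The complete-intersection part is matched in \emph{every} degree by $\lpp = (x_1^2,\dots,x_n^2)$; the almost-complete-intersection part is matched in degrees $2$ and $3$ by Francisco's Theorem~\ref{EGH(d)-ci} together with the equivalence of Lemma~\ref{equal-EGH(d)}; and each extra cubic generator is absorbed by adjoining the next largest degree-$3$ lex monomial not yet present, with Macaulay's theorem guaranteeing that the enlarged lex-plus-powers ideal remains legitimate --- its degree-$3$ dimension is always at least the minimum compatible with the unchanged degree-$2$ dimension, and a short count shows enough squarefree cubics are available, since two cubic extra generators already force $n \ge 4$.

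I do not expect any serious obstacle here: the corollary is essentially the packaging of Theorem~\ref{thm-defect2-egh(2)} with Chen's theorem for $n \le 4$, and the only part that is not purely formal is the elementary Hilbert-function and Macaulay bookkeeping needed to absorb generators of degree exceeding two.
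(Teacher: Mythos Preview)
Your core argument is exactly the paper's one-line proof: Theorem~\ref{thm-defect2-egh(2)} gives $\dim_K I_3 \ge n^2+2n-5$ for a defect two quadratic ideal with $n \ge 5$, and Lemma~\ref{equal-EGH(d)} converts that inequality into the existence of an LPP ideal matching $I$ in degrees $2$ and $3$. That is literally all the paper writes.

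Where you diverge is in scope. You read the corollary as covering (i) $n\le 4$ and (ii) defect two ideals whose extra generators $g,h$ may have degree $>2$, and you supply separate arguments for these (Chen for small $n$; Francisco plus adjunction of lex cubics for higher-degree generators). This extra work is correct, but the paper does not intend it: the entire section is explicitly set in $n\ge 5$ (see its opening sentence and the hypothesis of Theorem~\ref{thm-defect2-egh(2)}), and throughout the section ``defect two ideal'' is used interchangeably with ``defect two quadratic ideal'' (compare Main~Question~\ref{main-question}), so the corollary is simply the repackaging of the theorem via Lemma~\ref{equal-EGH(d)}. One small simplification even within your extended reading: the count of available squarefree cubics you worry about is unnecessary, since the ``moreover'' clause of Lemma~\ref{equal-EGH(d)} already shows that once the degree-$3$ inequality holds one can always enlarge the LPP ideal in degree $3$ to match $\dim_K I_3$ exactly, without any further combinatorial check.
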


\begin{proof} This result follows from Lemma \ref{equal-EGH(d)} and Theorem \ref{thm-defect2-egh(2)}.
\end{proof}

%
%
%
%
%
%
%
%
%
%
%
%
%
%

\section{The $\EGH$ conjecture when $n=5$ and $a_1=\cdots=a_5=2$}

In this section  $ R= K[x_1,\ldots,\,x_5]$ and $I = (f_1,\,\ldots,\,f_5)+( g_1,\,\ldots,\,g_\delta) = \ci + \fg$ is a homogeneous defect $\delta$ ideal in $R$, where  $f_1,\,\ldots,\,f_5$ is a regular sequence of quadrics and $\deg g_j \geq 2$ for $j=1,\,\ldots,\,\delta$. Throughout, we shall write $A := R/\ci$,  which is a graded Gorenstein local Artin ring.  We will show the existence of a 
lex-plus-powers ideal $\mathcal L \subseteq R$ containing $x_i^2$ for $i=1,\,\ldots,\,5$ with the same Hilbert function as $I$ by proving the following main theorem.
\begin{theorem} \label{thm-EGH-n=5} The EGH conjecture holds for all homogeneous ideals containing a regular sequence of quadrics in $K[x_1,\,\ldots,\,x_5]$.
\end{theorem}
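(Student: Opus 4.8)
The plan is first to reduce $\EGH$ for $n=5$ to the single degree $d=2$, and then to prove $\EGH_{\sec2,5}(2)$ by induction on the defect, splitting off the cases already handled in \S3.

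Since $s=\sum_{i=1}^{5}(a_i-1)=5$, Lemma~\ref{CM-lemma} reduces $\EGH_{\sec2,5}$ to the conjunction of the statements $\EGH_{\sec2,5}(d)$ for all $d\ge 0$, and the symmetry part of that lemma identifies $\EGH_{\sec2,5}(d)$ with $\EGH_{\sec2,5}(4-d)$. Thus only $\EGH_{\sec2,5}(0)$, $\EGH_{\sec2,5}(1)$, and $\EGH_{\sec2,5}(2)$ must be proved; the first is trivial and the second is Chen's result (Remark~\ref{Rmk-EGH-01}), so the whole theorem reduces to $\EGH_{\sec2,5}(2)$. By Lemma~\ref{equal-EGH(d)}, the latter is equivalent to the inequality $\dim_K I_3\ge\dim_K\lpp_3$ for every defect $\delta$ quadratic ideal $I=\ci+(\vect g\delta)$ in $K[x_1,\ldots,x_5]$, where $\lpp=(x_1^2,\ldots,x_5^2)+J'$ with $J'$ spanned by the largest $\delta$ non-square quadric monomials in lex order. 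Since $A=R/\ci$ is a complete intersection of five quadrics with Hilbert function $(1,5,10,10,5,1)$, we have $\dim_K\ci_3=25$, and a short count gives $\dim_K\lpp_3=25+c_\delta$ where $(c_0,\ldots,c_{10})=(0,3,5,6,6,8,9,9,10,10,10)$; so the assertion is the growth bound $\dim_K(VA_1)\ge c_\delta$ for the $\delta$-dimensional image $V\subseteq A_2$ of $I_2$. The cases $\delta=0$ (complete intersection), $\delta=1$ (Theorem~\ref{EGH(d)-ci}), $\delta=2$ (Theorem~\ref{thm-defect2-egh(2)}), and $\delta=10$ (here $I_2=R_2$, hence $I_3=R_3$) are already known.

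For $3\le\delta\le 9$ I would argue by induction on $\delta$. With $J=\ci+(\vect g{\delta-1})$ the defect $\delta-1$ ideal and $B=R/J$, the identity $I_3=J_3+g_\delta R_1$ together with the computation $\dim_K(J_3\cap g_\delta R_1)=5-\dim_K g_\delta B_1$ gives $\dim_K I_3=\dim_K J_3+\dim_K g_\delta B_1$; by the inductive hypothesis $\dim_K J_3\ge 25+c_{\delta-1}$, so it suffices to prove $\dim_K g_\delta B_1\ge c_\delta-c_{\delta-1}$, and here $c_\delta-c_{\delta-1}\in\{0,1,2\}$ in every case. If, after possibly relabelling the generators, there is a linear form $L$ with $Lg_1=\cdots=Lg_{\delta-1}=0$ but $Lg_\delta\ne 0$ in $A$, then the argument of Proposition~\ref{distinct-linear-ann} shows $\dim_K(J_3\cap g_\delta R_1)\le 3$, whence $\dim_K g_\delta B_1\ge 2\ge c_\delta-c_{\delta-1}$ and we are done; this already disposes of every case covered by Corollary~\ref{EGH2n2}. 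So the residual case is that of a defect $\delta$ quadratic ideal in which \emph{no} linear form annihilates all but one of the $g_i$ in $A$ without annihilating all of them; equivalently $\Ann_{A_1}(\vect g\delta)$ is unchanged by deleting any one $g_i$, while, by Corollary~\ref{cor-intersection-dim}, $\dim_K g'A_1\ge 3$ for every $g'\in\langle\vect g\delta\rangle\setminus\{0\}$. For this I would replace $g_\delta$ by a generic element of $\langle\vect g\delta\rangle$, use Lemmas~\ref{lemma1} and~\ref{lemma2} and Gorenstein duality in $A$ to control the cyclic modules $g'A$, and feed two-generated sub-ideals $\ci+(g_i,g_j)$ into Chen's Proposition~\ref{defect2-chen}; I would also exploit that for $\delta\ge 6$ the inductive hypothesis already forces $\dim_K B_3\le 2$ (and $\dim_K B_3=0$ gives $I_3=R_3$ outright), which sharply limits where the missing multiples of $g_\delta$ can lie.

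I expect the main obstacle to be precisely this residual ``linear-annihilator-degenerate'' case: the hypothesis that deleting a generator never changes the common linear annihilator removes the slack in the Koszul-relation argument of \S3, so one must instead directly estimate $\dim_K(VA_1)$ for a $\delta$-dimensional $V\subseteq A_2$ whose every nonzero element $g'$ has $\dim_K g'A_1\ge 3$ and whose proper subspans already fill almost all of $A_3$. The first instance, $\delta=3$ (where $\dim_K(VA_1)$ must reach $6$), is likely the sharpest, since there one must wring the bound out of the inequalities $\dim_K g_iA_1\ge 3$ alone together with Chen's Proposition~\ref{defect2-chen} applied to the sub-ideals $\ci+(g_i,g_j)$ and the linkage symmetry of the Hilbert function of $A$; the remaining relevant defects $\delta=5,6,8$ should then follow by the same analysis, the larger $\delta$ being easier because $B_3$ is forced to be tiny.
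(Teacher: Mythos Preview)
Your reduction to $\EGH_{\sec2,5}(2)$ and then to defect $\delta$ quadratic ideals is exactly right, and your numbers $c_\delta$ and the identity $\dim_K I_3=\dim_K J_3+\dim_K g_\delta B_1$ are correct. However, the proposal stops at a genuine gap: the ``residual linear-annihilator-degenerate'' case is only gestured at, not proved. Saying that you would feed pairs into Chen's Proposition~\ref{defect2-chen}, use Lemmas~\ref{lemma1}--\ref{lemma2}, and exploit the smallness of $B_3$ is not yet an argument, and in fact none of those ingredients by themselves force $\dim_K(VA_1)\ge 6$ for a $3$-dimensional $V\subseteq A_2$ with $\Ann_{A_1}V$ invariant under deletions. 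The paper's proof of this case (Proposition~\ref{prop_def3_dimAnn = 0}) requires a new idea: one represents multiplication by $g_1,g_2,g_3$ as $5\times 5$ matrices $\mathbb I_5,\alpha,\beta$ into the common image $W$, observes that every $z\mathbb I_5+x\alpha+y\beta$ can have at most two eigenvalues (else some combination would have a $1$-dimensional linear annihilator, contradicting the residual hypothesis via Remark~\ref{def3-rmk2}), and then analyses the degree-$5$ form $D(x,y,z)=\det(z\mathbb I_5-x\alpha-y\beta)$ to produce a linear factor and hence two singular matrices with non-disjoint kernels, forcing a contradiction through Lemma~\ref{lin_alg} and Corollary~\ref{def3-cor3}. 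The complementary case $\dim_K\Ann_{A_1}(g_1,g_2,g_3)=1$ (Proposition~\ref{prop_def3_dimAnn = 1}) also needs a specific structural input you do not have: one shows the common linear annihilator $L$ must divide one of the $f_i$, and then reduces to two explicit normal forms where $\Hf_{R/I}(3)=4$ is computed directly.

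There is also a structural difference worth noting. You run induction on $\delta$ all the way up, which forces you to confront the residual case at \emph{every} step (in particular at $\delta=5$ where $c_5-c_4=2$). The paper instead uses Gorenstein duality once (Lemma~\ref{lemma_higher_defect}): from $\Hf_{R/I}(d)=\Hf_{R/\ci}(d)-\Hf_{R/(\ci:I)}(5-d)$ one bounds $\Hf_{R/(\ci:I)}(2)$, checks that $(\ci:I)$ has no linear part, and finds inside it a quadratic ideal of defect $\le 3$, so that the already-proved cases $\delta\le 3$ finish $\delta\ge 4$ in one stroke. This avoids iterating the delicate residual analysis and is why all the hard work in the paper is concentrated at $\delta=3$.
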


Lemma \ref{CM-lemma} of Caviglia-Maclagan tells us that $\EGH_{\sec2,5}(d)$ holds if and only if $\EGH_{\sec2,5}(5-d-1)$ holds. Thus it will be enough to show $\EGH_{\sec2,5}(d)$ when $d=0,1,2$. 
By Remark \ref{Rmk-EGH-01} we know that $\EGH_{\sec2,5}(d)$ is true when $d=0,1$, therefore  $\EGH_{\sec2,5}(3)$ and $\EGH_{\sec2,5}(4)$ both hold as well. 

Our goal in this section is to prove $\EGH_{\sec2,5}(2)$ for any homogeneous ideal containing a regular sequence of quadrics: this will complete the proof of $\EGH_{\sec2, 5}$.  To achieve this, it suffices to understand $\EGH_{\sec2,5}(2)$ for quadratic ideals with arbitrary defect $\delta$ (but, of course, $\delta \leq 10$,  since $\dim_K R_2 = 15$), by Lemma \ref{equal-EGH(d)}.

\begin{remark}\label{def2-quad-n5} As a result of Corollary \ref{cor-EGH(2)}, 
we see that $\EGH_{\sec2,n}$ holds for any defect $\delta=2$ quadratic ideal in 
$K[x_1,\,\ldots,\,x_n]$ for  $n=5$.
\end{remark} 

To accomplish our goal we will prove $\EGH_{\sec2,5}(2)$ for defect $\delta\geq 3$ quadratic ideals.  In the next
subsection, we prove that if one knows the case where $\delta = 3$,  on obtains all the cases for $\delta \geq 4$.  
In the final subsection we finish the proof by establishing $\EGH_{\sec2,5}(2)$ for $\delta = 3$.  

%
%
%
\subsection*{Quadratic ideals with defect $\delta \geq 4$}
\begin{lemma}\label{lemma_higher_defect}  If $\EGH_{\sec2,5}(2)$ holds for all defect three quadratic ideals, then it holds for all  quadratic ideals with defect $\delta \geq 4$.
\end{lemma}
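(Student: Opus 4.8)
The plan is to induct on the defect $\delta\ge 4$, in each step passing to defect $\delta-1$ and hence ultimately to the defect three hypothesis. By Lemma~\ref{equal-EGH(d)}, and since $\dim_K I_2=5+\delta=\dim_K\lpp^{(\delta)}_2$ for every defect $\delta$ quadratic ideal (note $\delta\le 10$, as $\dim_K R_2=15$), the assertion ``$\EGH_{\sec2,5}(2)$ holds for $I$'' is exactly $\dim_K I_3\ge \dim_K\lpp^{(\delta)}_3$, where $\lpp^{(\delta)}=(x_1^2,\dots,x_5^2)+J'$ and $J'$ is generated by the $\delta$ greatest nonsquare quadratic monomials. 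A direct count of the degree-$3$ squarefree monomials surviving modulo $\lpp^{(\delta)}$ gives the $O$-sequence values $\dim_K\lpp^{(\delta)}_3=25,28,30,31,31,33,34,34,35,35,35$ for $\delta=0,\dots,10$; the only features I need are that $\dim_K\lpp^{(3)}_3=\dim_K\lpp^{(4)}_3$ and that $\dim_K\lpp^{(\delta)}_3-\dim_K\lpp^{(\delta-1)}_3\le 2$ for every $\delta\ge 4$.

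For the inductive step I would write a defect $\delta$ quadratic ideal as $I=J+gR$, where $J=\ci+WR$ for a hyperplane $W$ of $\langle g_1,\dots,g_\delta\rangle$ and $g$ a quadric outside $W$, so that $J$ is a defect $\delta-1$ quadratic ideal and $\dim_K I_3=\dim_K J_3+5-\dim_K(J_3\cap gR_1)\ge\dim_K\lpp^{(\delta-1)}_3+5-\dim_K(J_3\cap gR_1)$ by the induction hypothesis. In view of the numerics, for $\delta\in\{4,7,9,10\}$ the trivial bound $\dim_K(J_3\cap gR_1)\le 5$ already forces $\dim_K I_3\ge\dim_K\lpp^{(\delta)}_3$ (and $\delta=4$ also follows directly from the defect three hypothesis applied to $\ci+WR$ for a $3$-dimensional $W$, since $\dim_K\lpp^{(3)}_3=\dim_K\lpp^{(4)}_3$); for the remaining $\delta\in\{5,6,8\}$ it is enough to choose the decomposition so that $\dim_K(J_3\cap gR_1)\le 3$. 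The tool for this is Proposition~\ref{distinct-linear-ann}: if $W$ and $g$ can be picked so that some linear form annihilates $g_1,\dots,g_{\delta-1}$ but not $g$ over $A=R/\ci$, then $\dim_K(J_3\cap gR_1)\le 3$, and we are done. Such a choice fails to exist only if $\Ann_{A_1}(W)=\Ann_{A_1}(V)$ for every hyperplane $W$ of $V:=\langle \bar g_1,\dots,\bar g_\delta\rangle\subseteq A_2$, equivalently if $\dim_K(LV)\neq 1$ for every nonzero linear form $L$.

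The main obstacle is precisely this degenerate configuration. To handle it I would exploit the structure of $A$: it is Gorenstein with $O$-sequence $(1,5,10,10,5,1)$, and—this is the linear analogue of Corollary~\ref{cor-intersection-dim}—every nonzero $L\in A_1$ has $\dim_K\Ann_{A_1}(L)\le 1$, since after a coordinate change with $L=x_5$ the ring $A/LA$ is $K[x_1,\dots,x_4]$ modulo the images of $f_1,\dots,f_5$ and is Artinian, which forces at least four of those images to be linearly independent. Hence $\dim_K LA_1\ge 4$; and because $LA$ is an ideal in a Gorenstein Artin ring, hence self-dual up to shift with symmetric $O$-sequence $(1,\dim_K LA_1,\dim_K LA_2,\dim_K LA_1,1)$, Macaulay's growth bound gives $\dim_K LA_2\ge 3$. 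Combined with the duality relation $LA_1\subseteq (VA_1)^{\perp}$ inside $A_2$ that holds whenever $L$ kills all of $V$—which yields $\dim_K I_3=25+\dim_K VA_1\le 25+(10-\dim_K LA_1)\le 31$—this rules out a common linear annihilator of $V$ for $\delta\ge 5$, since $\dim_K\lpp^{(\delta)}_3>31$ there. One is then left with $\Ann_{A_1}(V)=0$ and $\dim_K(LV)\ge 2$ for all $L\neq 0$; here I would combine the bound on every $3$- and $4$-dimensional subspace $V'\subseteq V$ coming from the already-established lower-defect cases with Lemmas~\ref{lemma1} and~\ref{lemma2} (two quadrics in $V$ sharing an $A_1$-image would produce a quadric in $V$ with a $2$-dimensional linear annihilator, contradicting Corollary~\ref{cor-intersection-dim}) to show that $\dim_K VA_1$, and hence $\dim_K I_3$, already attains $\dim_K\lpp^{(\delta)}_3$. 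Executing this last case analysis cleanly, with the tight bookkeeping for $\delta=5,6$ in particular, is the hard part; once it is done the induction closes and, together with the cases $\delta\le 3$ (Remark~\ref{def2-quad-n5} and the defect three hypothesis), establishes $\EGH_{\sec2,5}(2)$ for all $\delta\ge 4$.
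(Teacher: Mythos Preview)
Your approach is quite different from the paper's and has a real gap.

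The paper handles $\delta=4$ exactly as you do, but for $5\le\delta\le10$ it does \emph{not} induct via Proposition~\ref{distinct-linear-ann}; it argues by Gorenstein duality on the colon ideal $\ci:I$. Assuming $\Hf_{R/I}(3)>\Hf_{R/\dL}(3)$, duality bounds $\Hf_{R/(\ci:I)}(2)$ from above; a short direct argument shows $(\ci:I)_1=0$ (any $L\in(\ci:I)_1$ would give $\dim_K\Ann_{A_2}(L)\ge\delta\ge5$, whereas $A/LA$ is a quotient of a four-variable polynomial ring by an $\mathfrak m$-primary ideal of quadrics, so $\dim_K[A/LA]_3\le4$, a contradiction). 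Hence $\ci:I$ contains a quadratic ideal of defect $\gamma\le3$, and applying the already-known $\EGH_{\sec2,5}(2)$ for defect $\le3$ to it contradicts the identity $\Hf_{R/(\ci:I)}(3)=\delta$. This is complete and clean.

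Your route instead needs, for $\delta\in\{5,6,8\}$, a splitting $I=J+gR$ with $\dim_K(J_3\cap gR_1)\le3$ via Proposition~\ref{distinct-linear-ann}. Two problems. First, that proposition is stated only for $\delta\le n-1=4$, so you cannot cite it as written. Second, and more seriously, when no $L\in A_1$ has $\dim_K(LV)=1$, your attempt to exclude a common linear annihilator of $V$ is circular: you deduce $\dim_K I_3\le31$ in that sub-case and say this ``rules it out'' because $\dim_K\lpp^{(\delta)}_3>31$---but $\dim_K I_3\ge\dim_K\lpp^{(\delta)}_3$ is exactly what you are trying to prove, so this would exhibit a \emph{counterexample}, not exclude one. (That no such $L$ exists for $\delta\ge5$ is true, but it needs the paper's bound $\dim_K[A/LA]_3\le4$, i.e.\ $\dim_K LA_2\ge6$; your Macaulay estimate $\dim_K LA_2\ge3$ gives only $\delta\le7$.) Finally, the residual case $\Ann_{A_1}(V)=0$ with $\dim_K(LV)\ge2$ for every $L\ne0$ is left as an acknowledged ``hard part'' with no argument; nothing in the tools you cite forces, say, $\dim_K VA_1\ge8$ when $\delta=5$. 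As written, the proof does not close.
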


\begin{proof} 
Let $I = (f_1,\,\ldots,\,f_5, g_1, g_2, g_3, g_4)=\ci+\fg \subseteq R$ be a defect $4$ homogeneous ideal generated by quadrics, where $f_1,\,\ldots,\,f_5$ form a regular sequence. By assumption the defect three quadratic ideal $J = \ci+( g_1, g_2, g_3) \subseteq I$ satisfies $\EGH_{\sec2,5}(2)$, that is, $\dim_K J_3 \geq 31$.

Let $\lpp = (x_1^2,\,\ldots,\,x_5^2, x_1x_2, x_1x_3,x_1x_4, x_1x_5)$ be the LPP ideal with $\dim_K \lpp_2 =\dim_K I_2 =  9$.  Then we get $\dim_K I_3 \geq \dim_K J_3 \geq 31= \dim_K \lpp_3$, as we need for the case of defect $\delta=4$.

Now assume $5 \leq \delta \leq 10$. Let $\dI$ denote an arbitrary defect $\delta$ quadratic ideal, and 
let $\dL$ denote the lex-plus-power ideal with defect $\delta\geq 5$. More precisely, 
$\dL:= (x_1^2,\,\ldots,\,x_5^2) + ( m_1,\,\ldots,\,m_{\delta})$ where $m_i$ are the next greatest quadratic square-free monomials with respect to lexicographic order. 
We need to show that $ \Hf_{R/\dI}(3) \leq  \Hf_{R/\dL}(3)$. 

We assume that $\Hf_{R/\dI}(3) \geq  \Hf_{R/\dL}(3)+1$, and we shall obtain a contradiction.

Using duality for Gorenstein rings, we know that for $0\leq d \leq 5$
we have that
$$ \Hf_{R/\dI}(d) =  \Hf_{R/ \ci}(d) -  \Hf_{R / ( \ci :\dI )} (5-d).$$

Then, for $d=3$, using the assumption we get

\begin{eqnarray*}
 \Hf_{R /( \ci : \dI )}(2) &=&   \Hf_{R/ \ci}(3) -  \Hf_{R/\dI}(3) \\
&\leq & 9- \Hf_{R/\dL}(3) = \begin{cases} 7  \,  \text{ if } \, \delta=5, \\ 
8  \, \text{ if } \, \delta=6,7,\\ 
9  \, \text{ if } \, \delta=8,9,10. \\ \end{cases}
\end{eqnarray*}

We next show that $ \dim_K(\ci : \dI)_{1}=0$. If there is a nonzero linear form $\ell \in \ci : \dI$ then $\dim_K \Ann_{A_2}\ell A\geq \delta\geq 5$, so we get that $\dim_K A_3/\ell A_2 \geq 5$. On the other hand, we see that 
$A_3/\ell A_2 \cong [R/(\bar{f_1},\,\ldots,\,\bar{f_4},\bar{f_5},l)]_3$ where the $\bar{f_i}$ are the images of the $f_i$,
and the dimension of $[R/(\bar{f_1},\,\ldots,\,\bar{f_4},\bar{f_5},l)]_3$ as a $K$-vector space is at most $4$. 

Then we can find a defect $\gamma$ quadratic ideal $\gJ \subseteq  \ci : \dI$ for $\gamma=3, 2, 1$ if the defect of $\dI$ is 
$\delta=5$ or $\delta=6,7$ or $\delta=8,9,10$, respectively. We then have the inequalities shown below, where the first
is obvious  and the second follows by comparison with Hilbert functions of quotients by LPP ideals in degree 3 
and the fact that, by assumption, EGH$_{\sec2,5}(2)$ holds for quadratic ideals with defect less than or equal to three. \begin{eqnarray*}
 \Hf_{R/( \ci : \dI )}(3) &\leq&  \Hf_{R/\gJ }(3)  \leq \begin{cases} 4  \,  \text{ if $\gJ$ is a defect $\gamma=3$ quadratic ideal when $\delta=5$} , \\ 
5  \, \text{ if $\gJ$ is a defect $\gamma=2$ quadratic ideal when $6\leq \delta \leq 7$} ,\\ 
7  \, \text{ if $\gJ$ is a defect $\gamma=1$ quadratic ideal  when $8\leq \delta \leq 10$}. \\ \end{cases} 
\end{eqnarray*}

 However, each of the cases above contradicts the following equality:

$$  \Hf_{R/( \ci :\dI )}(3) =   \Hf_{R/ \ci}(2) -  \Hf_{R/\dI}(2) = \delta.$$

Thus, we get $ \Hf_{R/\dI}(3) \leq  \Hf_{R/\dL}(3)$ for any defect $\delta\geq 5$ quadratic ideal $\dI$ in $R$.
\end{proof}

%
%
%
\subsection*{Defect three quadratic ideals}

\begin{lemma} Let $I = \ci+(g_1,g_2,g_3)$ be a defect three quadratic ideal in the polynomial ring $R$. Then, for any $1\leq i_1 <  i_2 \leq 3$,
\[ \dim_K ( \ci : ( g_{i_1}, g_{i_2}) )_1 \leq 1,\]
and, furthermore, $ \dim_K ( \ci : ( g_1, g_2, g_3) )_1 \leq 1$.
\end{lemma}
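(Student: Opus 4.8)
The plan is first to reduce to the two-generator case, and then to convert the degree-one colon condition into a statement about a single Hilbert-function value via Gorenstein duality. For the reduction: since $(\ci : (g_1,g_2,g_3))_1 = \bigcap_{j=1}^{3}(\ci : g_j)_1$ is contained in $(\ci : (g_{i_1},g_{i_2}))_1$ for every pair, it suffices to prove the bound for each pair. So I fix a pair, write $g,h$ for $g_{i_1},g_{i_2}$, set $A = R/\ci$ (an Artinian Gorenstein graded $K$-algebra of socle degree $5$, with Hilbert function $(1,5,10,10,5,1)$), and put $\mathfrak{b} = (g,h)A$ and $J' = \ci + (g,h)$, so that $R/J' = A/\mathfrak{b}$. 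Because $I$ is a defect three quadratic ideal, the classes of $g$ and $h$ in $A_2$ are linearly independent, so $J'$ is a genuine defect two quadratic ideal.

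The heart of the argument is the identity
\[ \dim_K (\ci : (g,h))_1 \;=\; \Hf_{R/J'}(4). \]
To prove it I would note that $(\ci:(g,h))_1 = \Ann_{A_1}(g)\cap\Ann_{A_1}(h)$ and show that this subspace of $A_1$ is exactly the orthogonal complement of $\mathfrak{b}_4 = gA_2 + hA_2 \subseteq A_4$ under the perfect pairing $A_1\times A_4 \to A_5 \cong K$: if $\ell g = \ell h = 0$ in $A$ then $\ell$ kills $gA_2+hA_2$; conversely, if $\ell$ kills $\mathfrak{b}_4$ then $(\ell g)A_2 = 0$ in $A_5$, hence $\ell g = 0$ by nondegeneracy of $A_3\times A_2 \to A_5$, and likewise $\ell h = 0$. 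Thus $\dim_K(\ci:(g,h))_1 = \dim_K A_4 - \dim_K \mathfrak{b}_4 = \dim_K(A/\mathfrak{b})_4 = \Hf_{R/J'}(4)$.

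It remains to show $\Hf_{R/J'}(4) \le 1$, and here I would use the EGH cases already available for $n = 5$. Since $J'$ is a defect two quadratic ideal, Corollary~\ref{cor-EGH(2)} together with Lemma~\ref{equal-EGH(d)} gives $\dim_K J'_3 \ge \dim_K (x_1^2,\ldots,x_5^2,x_1x_2,x_1x_3)_3 = 30$ (a routine count of cubic monomials). Now apply $\EGH_{\sec2,5}(3)$ — which holds, being equivalent by Lemma~\ref{CM-lemma} to Chen's $\EGH_{\sec2,5}(1)$ — to $J'$: there is a lex-plus-powers ideal $\lpp = (x_1^2,\ldots,x_5^2) + L$, with $L$ a lex ideal, such that $\dim_K \lpp_3 = \dim_K J'_3 \ge 30$ and $\dim_K \lpp_4 = \dim_K J'_4$. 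The inequality $\dim_K\lpp_3 \ge 30$ forces the square-free part of $L_3$ to consist of at least the five lexicographically largest square-free cubics $x_1x_2x_3,x_1x_2x_4,x_1x_2x_5,x_1x_3x_4,x_1x_3x_5$; since $L$ is an ideal, $L_4$ then contains every square-free quartic except $x_2x_3x_4x_5$, so $\Hf_{R/\lpp}(4) \le 1$ and hence $\Hf_{R/J'}(4) \le 1$. Combining with the duality identity yields $\dim_K(\ci:(g,h))_1 \le 1$, which is the lemma.

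The only genuinely non-routine point is the duality identity: recognizing that the degree-one colon space is governed, through the pairing on the Gorenstein ring $A$, by the Hilbert function of $R/J'$ in the complementary degree $4$. Once that is in place the value in degree $4$ is pinned between $\EGH_{\sec2,5}(2)$ for defect two ideals (established in the previous section) and the general statement $\EGH_{\sec2,5}(3)$, and the remaining monomial bookkeeping for small lex-plus-powers ideals is elementary. I would also note there is no circularity: $\EGH_{\sec2,5}(3)$ rests only on Chen's $\EGH_{\sec2,5}(1)$, and the defect two case of $\EGH_{\sec2,5}(2)$ was proved beforehand, so neither input uses the defect three case under consideration.
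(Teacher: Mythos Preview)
Your proof is correct, but it takes a genuinely different route from the paper's.

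The paper also reduces to the two-generator case and also uses Gorenstein duality, but it runs the duality in the \emph{opposite} direction. Assuming two independent linear forms $x_1,x_2$ lie in $(\ci:(g_1,g_2))_1$, the paper computes
\[
2 \;=\; \Hf_{J'/\ci}(2) \;=\; \Hf_{R/(\ci:J')}(3) \;\le\; \Hf_{R/(\ci,x_1,x_2)}(3) \;=\; \Hf_{K[x_3,x_4,x_5]/(\bar f_1,\ldots,\bar f_5)}(3) \;\le\; \binom{3}{3}=1,
\]
a contradiction. The final bound needs nothing beyond the fact that $(\bar f_1,\ldots,\bar f_5)$ is $\mathfrak m$-primary in three variables and hence contains a regular sequence of three quadrics.

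By contrast, you pair degree~$1$ with degree~$4$ to get $\dim_K(\ci:(g,h))_1 = \Hf_{R/J'}(4)$, and then control $\Hf_{R/J'}(4)$ by feeding $\dim_K J'_3\ge 30$ (from the defect-two case of $\EGH_{\sec2,5}(2)$) into $\EGH_{\sec2,5}(3)$ and doing a short lex-segment calculation. This is valid and, as you note, non-circular, since both inputs are established prior to and independently of the defect-three discussion. The trade-off is clear: the paper's argument is more elementary and self-contained (no EGH input at all, just a three-variable dimension count), while yours showcases how the pairing identity $\dim_K(\ci:J')_1=\Hf_{R/J'}(4)$ together with the already-proved EGH pieces immediately forces the bound. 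Your duality identity is in fact the cleanest conceptual statement here, and is worth keeping in mind even if one ultimately prefers the shorter contradiction argument.
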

\begin{proof}  Suppose that $\dim_K (\ci : ( g_1, g_2) )_1\geq 2$, and assume there are $\ell_1, \ell_2 \in R_1$ such that $\ell_i g_1, \ell_ig_2 \in  \ci$ for both $i=1,2$. Without loss of generality we assume that $\ell_1 =x_1$ and $\ell_2 = x_2$. 

Therefore, we can write $(x_1, x_2, f_1,\,\ldots,\,f_5) \subseteq  \ci : I $.  Then
\begin{align*}
2& = \Hf_{ (f_1,\,\ldots,\,f_5, g_1, g_2) / \ci}(2)  = \Hf_{R \big/ \big(\ci  :  (f_1,\,\ldots,\,f_5, g_1, g_2)\big)}(5-2 ) , \hspace*{1ex} (\text{by duality})\\ 
 &\leq \Hf_{R/(x_1, x_2, f_1,\,\ldots,\,f_5)}(3) \\
  &= \Hf_{ K[x_3, x_4, x_5]/(\bar{f_1},\,\ldots,\,\bar{f_5})}(3),  \hspace*{1ex} (\text{where $\bar{f_i}$ is the image of $f_i$ in $K[x_3, x_4, x_5]$,})\\
&\leq  \binom{5-2}{3} = 1,
\end{align*}
which is a contradiction. 
\end{proof}  

Hence, working in the graded Gorenstein Artin $K$-algebra $A= R/\ci$, we have from the lemma just above that 
$\Ann_{A_1} (g_1, g_2)$ is a $K$-vector space of dimension at most one, and, therefore
\[\dim_K \Ann_{A_1} (g_1, g_2, g_3) \leq 1\] 
since $\Ann_{A_1} (g_1, g_2, g_3) \subseteq \Ann_{A_1} (g_1, g_2)$. 

\begin{remark} \label{def3-rmk1} By Remark \ref{def2-quad-n5} we know that for any defect two quadratic ideal $J$  in $R$, $\dim_K J_3 $ is at least $30$. Then $\EGH_{\sec2,5}(2)$ holds for the defect three quadratic ideals $I$ containing a defect two quadratic ideal $J$ with $\dim_KJ_3 \geq 31$, as $ \Hf_{R/I}(3) \leq  \Hf_{R/J}\leq 4$.  
\end{remark}

We henceforth focus on defect three quadratic ideals $I = \ci+(g_1,g_2,g_3)$
 in $ R$ such that every defect two quadratic ideal $J \subseteq I$ containing $\ci$ has $\dim_K J_3 = 30$.

For such defect three quadratic ideals, we observe the following.

\begin{remark}\label{def3-rmk2} Consider the ideal $\mathcal I = (g_1,g_2,g_3)A$ in the Gorenstein ring $A$ such that any ideal $(g_{i_1}, g_{i_2})A $ contained in $\mathcal I$ has degree three component of dimension $\dim_K(g_{i_1}, g_{i_2})A_1=5$. 
Assuming that $\dim_K\Ann_{A_1}(g_1)=1$, we have that $\Ann_{A_1}(g_1,g_2, g_3) = \Ann_{A_1}(g_1)$.

Furthermore, if $g_1A_1$ is $5$-dimensional, that is, there is no linear form that annihilates $g_1$ in $A$, then for any quadric $g$ in $Kg_1+Kg_2+Kg_3$ the vector space $gA_1 \subseteq A_3$ is either $3$ or $5$ dimensional. 
 \end{remark}
 
\begin{proof}Let $\dim_K \Ann_{A_1}(g_1)=1$, and let the linear form $L$ annihilate $g_1$ but not some form $g' \in Kg_2+Kg_3 $ in $A$. We define a defect two quadratic ideal  
$$J = (f_1,\,\ldots,\,f_5, g_1, g') \subseteq \ci+(g_1,g_2,g_3)$$ 
in $R$. Hence,  by Corollary \ref{Cor-diff-linear-ann}, we know already that $\dim_K\,J_3 \geq 31$, which means that  
$\dim_K(g_1, g')A_1=6$. This contradicts our assumption. Thus, $L$ must be in $\Ann_{A_1}(g_1,g_2,g_3)$. 
\end{proof}

Recall that the following holds, by Proposition~\ref{distinct-linear-ann}, when $\delta = 3$.
\begin{proposition}\label{def3-prop1} Let $I = \ci+( g_1,g_2,g_3) \subseteq K[x_1,...,x_5]$ be a defect $3$ quadratic ideal.

As usual, let $A = R/\ci$.   If there is a linear form $L \in \Ann_{A} (g_1,g_2)$ such that $L \notin \Ann_{A}(g_3)$, then 
\[\dim_K\big( (\ci+(g_1,g_2))_3 \cap g_3R_1 \big) \leq 3.\] \qed
\end{proposition}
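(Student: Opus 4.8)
The plan here is almost trivial: Proposition~\ref{def3-prop1} is simply the special case $n = 5$, $\delta = 3$ of the already-proven Proposition~\ref{distinct-linear-ann}, so I would prove it by instantiation rather than by any new argument. First I would check that the standing hypothesis $2 \le \delta \le n-1$ of Proposition~\ref{distinct-linear-ann} is met, which it is since $2 \le 3 \le 4$. Next I would unwind the notation: in Proposition~\ref{distinct-linear-ann} the forms $g_1,\ldots,g_{\delta-1}$ become $g_1, g_2$, the ideal $J = \ci + (\vect g{\delta-1})$ becomes $\ci + (g_1,g_2)$, and the hypothesis ``there is a linear form $L \in \Ann_A(g_1,\ldots,g_{\delta-1})$ with $Lg_\delta \neq 0$ in $A$'' is literally the hypothesis here that $L \in \Ann_A(g_1,g_2)$ but $L \notin \Ann_A(g_3)$ (i.e.\ $Lg_3 \ne 0$ in $A$).

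With these identifications the conclusion of Proposition~\ref{distinct-linear-ann}, namely $\dim_K\big((\vect fn, g_1,\ldots,g_{\delta-1})_3 \cap g_\delta R_1\big) \le 3$, reads exactly $\dim_K\big((\ci + (g_1,g_2))_3 \cap g_3 R_1\big) \le 3$, which is the assertion to be proved. So the only ``step'' is the citation together with a sentence of translation of indices, and I would present it that way.

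There is no genuine obstacle: everything that required work (the iterated reduction via the Koszul syzygies of $(x_1, x_2, x_3, x_4)$, the matrices $M_1, M_2, M_3$, and the successive applications of Lemma~\ref{Chen3.3} that force $Lg_\delta \in (\vect fn)_3$) was already carried out in the proof of Proposition~\ref{distinct-linear-ann} for general $\delta$. If for some reason a self-contained argument were desired, I would simply re-run that same chain of syzygy computations with $n = 5$ and $\delta = 3$ fixed throughout, deriving a contradiction from the assumption $\dim_K\big((\ci+(g_1,g_2))_3 \cap g_3R_1\big) \ge 4$; but in the interest of brevity I would not do so, and would instead just record the proposition as an immediate consequence of Proposition~\ref{distinct-linear-ann}.
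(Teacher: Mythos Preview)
Your proposal is correct and matches the paper's own treatment exactly: the paper states Proposition~\ref{def3-prop1} with a \qed and introduces it by saying ``Recall that the following holds, by Proposition~\ref{distinct-linear-ann}, when $\delta = 3$,'' so it too records the result as the instantiation $n=5$, $\delta=3$ of Proposition~\ref{distinct-linear-ann} with no new argument.
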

When a defect three quadratic ideal $I$ satisfies the condition of the above proposition, we notice a sharp bound for 
$\Hf_{R/I}(3)$.
\begin{corollary}\label{def3-cor3} Given a defect three quadratic ideal $I = \ci+(g_1,g_2,g_3)$ in $R=K[x_1,\,\ldots,\,x_5]$, and, as usual, let $A=R/\ci$, which is a graded Gorenstein Artin ring.  If $\dim_K \Ann_{A_1}(g_1,g_2) =1$ and 
$\Ann_{A_1}(g_1,g_2, g_3) = 0$  then 
$$\dim_K I_3 \geq 32 > \dim_K \lpp_3,$$
where $\lpp = (x_1^2,\,\ldots,\,x_5^2, x_1x_2, x_1x_3, x_1x_4)$.
\end{corollary}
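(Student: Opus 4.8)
The plan is to reduce everything to a dimension count in the Gorenstein Artin ring $A = R/\ci$, whose Hilbert function is $(1,5,10,10,5,1)$, and then feed in Proposition~\ref{def3-prop1}. Since $\dim_K A_3 = 10$ and $\dim_K R_3 = 35$, we have $\dim_K \ci_3 = 25$; and because the $g_i$ are quadrics, the degree-$3$ part of $I/\ci$ is the degree-$3$ component of the ideal $(g_1,g_2,g_3)$ of $A$, namely $g_1A_1 + g_2A_1 + g_3A_1$. Hence $\dim_K I_3 = 25 + \dim_K(g_1,g_2,g_3)A_3$, and since a direct count (or the formula in the proof of Corollary~\ref{EGH2n2} with $n=5$, $\delta=3$) gives $\dim_K \lpp_3 = 31$, it suffices to show $\dim_K(g_1,g_2,g_3)A_3 \ge 7$.

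First I would check that Proposition~\ref{def3-prop1} applies. Let $L$ span the one-dimensional space $\Ann_{A_1}(g_1,g_2)$. If $L$ annihilated $g_3$, then $0 \neq KL = \Ann_{A_1}(g_1,g_2)\cap \Ann_{A_1}(g_3) = \Ann_{A_1}(g_1,g_2,g_3)$, contradicting the hypothesis; so $Lg_3 \neq 0$ in $A$, and Proposition~\ref{def3-prop1} gives $\dim_K\big((\ci+(g_1,g_2))_3 \cap g_3R_1\big) \le 3$. Call this dimension $m$. Identifying $R_1$ with $A_1$, an element $g_3\ell$ with $\ell \in R_1$ lies in $\ci+(g_1,g_2)$ exactly when $g_3\bar\ell \in (g_1,g_2)A_3$; since $\ell \mapsto g_3\ell$ is injective on $R_1$, this shows $m = \dim_K\{\,\bar\ell \in A_1 : g_3\bar\ell \in (g_1,g_2)A_3\,\}$.

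Now the count. Put $p = \dim_K(g_1,g_2)A_3$ and $a = \dim_K\Ann_{A_1}(g_3)$, so $\dim_K g_3A_1 = 5-a$. The multiplication map $A_1 \to A_3$, $\bar\ell \mapsto g_3\bar\ell$, has kernel of dimension $a$ and sends the space of dimension $m$ above onto $(g_1,g_2)A_3 \cap g_3A_1$, so $\dim_K\big((g_1,g_2)A_3 \cap g_3A_1\big) = m-a$. Inclusion–exclusion then yields $\dim_K(g_1,g_2,g_3)A_3 = p + (5-a) - (m-a) = p+5-m$. Since $I$ is defect three, $J = \ci+(g_1,g_2)$ is a defect two quadratic ideal, so $\EGH_{\sec2,5}(2)$ for defect two ideals (Corollary~\ref{cor-EGH(2)}, cf.\ Remark~\ref{def2-quad-n5}) gives $\dim_K J_3 \ge 30$, i.e.\ $p \ge 5$. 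Together with $m \le 3$ this gives $\dim_K(g_1,g_2,g_3)A_3 \ge 5+5-3 = 7$, hence $\dim_K I_3 \ge 32 > 31 = \dim_K \lpp_3$.

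The only delicate point is the bookkeeping that makes $a$ cancel in the inclusion–exclusion step, so that $m$ (rather than $m-a$) controls the answer; the real content — the bound $m \le 3$ — is exactly Proposition~\ref{distinct-linear-ann} specialized to $\delta = 3$, so beyond keeping careful track of which subspaces live in $R$ and which in $A$, I do not expect an obstacle.
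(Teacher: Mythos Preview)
Your proof is correct and follows essentially the same approach as the paper: verify that the hypothesis puts you in the setting of Proposition~\ref{def3-prop1}, get the bound $m \le 3$, and combine it via inclusion--exclusion with the defect-two bound $\dim_K J_3 \ge 30$. The paper carries out the inclusion--exclusion directly in $R$ (so that $\dim_K g_3R_1 = 5$ without any $a$ to track), which avoids the pass to $A$ and the cancellation you flag as ``the only delicate point''; but the content is identical.
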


\begin{proof} By assumption there is a linear form in $\Ann_{A}(g_1,g_2)$, say $L$, such that $L$ does not annihilate $g_3$. Hence, Proposition \ref{def3-prop1} gives us $\dim_K\big( (\ci +(g_1,g_2))_3 \cap g_3R_1 \big)\leq 3$.
Then we get
\begin{eqnarray*}
\dim_K(\ci +( g_1, g_2,g_3))_3 &=& \dim_K(\ci +(g_1,g_2))_3+ \dim_K g_3R_1 \\
 & & -\dim_K\big( (\ci +(g_1,g_2))_3 \cap g_3R_1 \big)  \\
&\geq& 30+5-3 = 32 > 31=\dim_K \lpp_3.
\end{eqnarray*}
\end{proof}
%
%
%
\begin{proposition} \label{def3-prop2} Suppose that for all quadratic forms $g$ in $Kg_1+Kg_2$, the subspace $gA_1$ of $A_3$ is a $3$-dimensional. If $\dim_K(g_1,g_2)A_1 = 5$, then $\dim_K \Ann_{A_1}(g_1, g_2) = 1$.
\end{proposition}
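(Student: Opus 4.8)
The plan is to analyze the multiplication maps $\phi_g\colon A_1\to A_3$, $\ell\mapsto g\ell$, as $g$ ranges over the pencil $Kg_1+Kg_2$, exploiting the hypothesis that every such $\phi_g$ (for $g\neq 0$) has rank exactly $3$, hence a $2$-dimensional kernel $\Ann_{A_1}(g)$. The goal is to show $\Ann_{A_1}(g_1)\cap\Ann_{A_1}(g_2)$ has dimension exactly $1$ (it has dimension $\le 1$ by the defect-three lemma already proved). So the real content is the lower bound: the two $2$-dimensional kernels $\Ann_{A_1}(g_1)$ and $\Ann_{A_1}(g_2)$ inside the $5$-dimensional space $A_1$ must meet in at least dimension $1$, and one must rule out that they meet in exactly a point only because $2+2<5$; rather I want to force $\dim(\Ann_{A_1}(g_1)+\Ann_{A_1}(g_2))\le 4$, equivalently $\dim((g_1,g_2)A_1)$ being controlled. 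Here I would use the hypothesis $\dim_K(g_1,g_2)A_1=5$ together with the inclusion $(g_1,g_2)A_1\subseteq g_1A_1+g_2A_1$ with each summand $3$-dimensional inside the $\binom{5}{3}=10$-dimensional space $A_3$.

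First I would set $V_i=\Ann_{A_1}(g_i)$ for $i=1,2$, each of dimension $2$, and $W=V_1\cap V_2$, so $\dim W\le 1$ and I must exclude $\dim W=0$. Assume $\dim W=0$, so $A_1=V_1\oplus V_2$, and $g_1A_1=g_1V_2$, $g_2A_1=g_2V_1$ are both $3$-dimensional. The key step is to look at the generic element $g=g_1-cg_2$ of the pencil and its kernel $V_c:=\Ann_{A_1}(g)$, which is $2$-dimensional by hypothesis. Since $A_1=V_1\oplus V_2$ and $\phi_g=\phi_{g_1}-c\phi_{g_2}$, writing $\ell=v_1+v_2$ with $v_i\in V_i$ gives $g\ell = g_1v_2 - cg_2 v_1$; the condition $g\ell=0$ becomes $g_1v_2 = cg_2v_1$ inside $A_3$. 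I would argue that as $c$ varies this forces a compatibility between the two $3$-dimensional subspaces $g_1A_1$ and $g_2A_1$: namely, for generic $c$ the map $v_1\mapsto$ (the unique $v_2$ with $g_1v_2=cg_2v_1$, if it exists) would have to be defined on a $2$-dimensional subspace of $V_1$, which, letting $c\to$ another value or comparing two values $c,c'$, produces an element $v_1$ with $g_2v_1\in g_1A_1\cap g_2A_1$ related across the pencil; tracking this should either produce a common linear annihilator (contradicting $\dim W=0$) or force one of the $\phi_{g_i}$ to have rank $<3$ or $(g_1,g_2)A_1$ to have dimension $<5$ — each contradicting a hypothesis.

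The cleanest route for that core step is probably a dimension count on $(g_1,g_2)A_1$. We have
\[
(g_1,g_2)A_1 = g_1A_1 + g_2A_1,
\]
and $\dim(g_1A_1\cap g_2A_1) = \dim g_1A_1 + \dim g_2A_1 - \dim(g_1,g_2)A_1 = 3+3-5 = 1$. So there is a one-dimensional space $\xi K\subseteq g_1A_1\cap g_2A_1$; write $\xi = g_1 a = g_2 b$ with $a\in V_2$, $b\in V_1$ (using the decomposition $A_1=V_1\oplus V_2$ to choose representatives, since $\ker\phi_{g_1}=V_1$ forces $a$ unique mod $V_1$ and similarly $b$). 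Then $g_1 a - g_2 b = 0$. Now consider the element $g' = g_1 - g_2$ of the pencil (or, if needed, $g_1 - cg_2$ for a scalar $c$ making the two representatives align — replace $g_2$ by $cg_2$): one checks $(a+b)\in\Ann_{A_1}(g')$ since $g'(a+b) = g_1a + g_1 b - g_2 a - g_2 b = g_1 a - g_2 b + (g_1 b - g_2 a) = g_1b - g_2 a$, and I would show this last expression vanishes by a parallel argument with the roles of the pencil element symmetrized, or by noting $g_1 b\in g_1A_1$ with $b\in V_1$ so $g_1 b$ ranges over a $2$-dim subspace, and similarly $g_2 a$; the hypothesis that every pencil member has rank exactly $3$ pins down $\Ann_{A_1}(g')$ to dimension $2$ and hence $\dim(\Ann_{A_1}(g')\cap(V_1\oplus V_2$-complement$))$ forces $\Ann_{A_1}(g_1)\cap\Ann_{A_1}(g')\neq 0$; since $g_1$ and $g'$ span the pencil this common annihilator lies in $V_1\cap V_2 = W$, contradicting $\dim W=0$.

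I expect the main obstacle to be making the last paragraph's bookkeeping genuinely rigorous: the issue is that "choose representatives" for elements of $g_1A_1\cap g_2A_1$ in $V_2$ resp. $V_1$ and then show the resulting element of $A_1$ is annihilated by a pencil member requires knowing the pairings $g_1|_{V_1}$ and $g_2|_{V_2}$ interact correctly, and this is exactly where the hypothesis "$gA_1$ is $3$-dimensional for \emph{all} $g$ in the pencil" (not just $g_1,g_2$) must be used — a single bad $c$ would allow $\phi_{g_1-cg_2}$ to drop rank and break the argument. So the crux is to show that if $W=0$ then some pencil member has $\dim(g\cdot A_1)\neq 3$ (either $2$ or $\ge 4$), contradicting the standing assumption; organizing the eigenvalue/rank argument (in the spirit of Lemma~\ref{lemma2}, applying the automorphism trick to $\phi_{g_1},\phi_{g_2}$ on the $3$-dimensional common image, or to the induced maps on $A_1/W$ when $W\neq 0$) is where the care is needed, and I would likely borrow the eigenvector device from Lemma~\ref{lemma2} to extract the contradiction.
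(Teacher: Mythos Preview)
Your strategy---assume the common annihilator $W=\Ann_{A_1}(g_1)\cap\Ann_{A_1}(g_2)$ is zero and derive a contradiction---matches the paper's, but your execution contains a dimension error and misses the one-line finish.

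The error: you write ``Assume $\dim W=0$, so $A_1=V_1\oplus V_2$,'' but $\dim V_1=\dim V_2=2$ while $\dim A_1=5$, so $V_1\oplus V_2$ is only $4$-dimensional and cannot equal $A_1$. Consequently the claims $g_1A_1=g_1V_2$ and $g_2A_1=g_2V_1$ are false (each $g_iV_j$ is at most $2$-dimensional, not $3$), and the representative argument that follows---choosing $a\in V_2$, $b\in V_1$ for the element $\xi$ of $g_1A_1\cap g_2A_1$---collapses. The remainder of the proposal never recovers a complete argument; the phrases ``I would show this last expression vanishes by a parallel argument'' and ``tracking this should either produce~\ldots'' are placeholders, not proofs.

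What the paper does (isolated as Lemma~\ref{lin_alg}) is look at a different $2$-dimensional subspace of $A_1$, namely $V_0:=\Ann_{A_1}(g_1-g_2)$, which has dimension $5-3=2$ by the pencil hypothesis. On $V_0$ the maps $\phi_{g_1}$ and $\phi_{g_2}$ agree; if $\ker\phi_{g_1}\cap\ker\phi_{g_2}=0$ they are both \emph{injective} on $V_0$, so $\phi_{g_1}(V_0)=\phi_{g_2}(V_0)$ is a $2$-dimensional subspace of $g_1A_1\cap g_2A_1$. Hence $\dim\big(g_1A_1+g_2A_1\big)\le 3+3-2=4$, contradicting $\dim_K(g_1,g_2)A_1=5$. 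Note this is already incompatible with your own (correct) computation $\dim\big(g_1A_1\cap g_2A_1\big)=3+3-5=1$: that intersection is too small to contain the $2$-dimensional $\phi_{g_1}(V_0)$, so the disjoint-kernel assumption must fail. You had the right pieces; the missing move was to bring in the kernel of a \emph{third} pencil element $g_1-g_2$ rather than trying to build everything from $V_1$ and $V_2$ alone.
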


 We first state the following observation in a linear algebra setting, which will be useful for the proof Proposition \ref{def3-prop2}.
\begin{lemma}\label{lin_alg} Let $S$, $T$ be linear transformations from $V$ to $W$, both $n$-dimensional vector spaces over $K$, such
that $\rank(S) = \rank(T)= \rank(S-T) = r$, and the kernels of $S$, $T$ are disjoint. Then the images of $S$ and $T$ are contained in the same $(3r-n)$-dimensional subspace of $W$.
\end{lemma}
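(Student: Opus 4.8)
The plan is to pick coordinates adapted to the three kernels $\Ker S$, $\Ker T$, and $\Ker(S-T)$, each of which has dimension $n-r$. Since $\Ker S \cap \Ker T = 0$ by hypothesis, we have $\dim(\Ker S) + \dim(\Ker T) = 2(n-r) \le n$, so $r \ge n/2$; set $k := \dim(\Ker S \cap \Ker(S-T))$ and note that on $\Ker(S-T)$ the maps $S$ and $T$ agree, so $\Ker(S-T) \cap \Ker S = \Ker(S-T) \cap \Ker T$. The key linear-algebra observation is that any vector $v \in V$ can be decomposed by using a complement: choose a subspace $U$ with $V = \Ker S \oplus U$, so $\dim U = r$ and $S|_U$ is injective onto $\Image S$. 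The goal is to show both $\Image S$ and $\Image T$ sit inside $S(U) + T(\Ker S)$ — or a similarly small space — and then count its dimension.

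First I would show $\Image S + \Image T$ is contained in a subspace of dimension $3r - n$. Write $W_0 := S(V) + T(\Ker S)$. Since $\dim T(\Ker S) \le \dim(\Ker S) = n - r$ but also $T(\Ker S) \subseteq \Image T$ which has dimension $r$, and since $\Ker T \cap \Ker S = 0$ forces $T|_{\Ker S}$ to be injective, $\dim T(\Ker S) = n - r$ exactly. So $\dim W_0 \le r + (n-r) = n$, which is too big; I need to be cleverer. The right move: decompose using $\Ker(S-T)$ as well. On $\Ker(S-T)$, $S$ and $T$ coincide, so $S(\Ker(S-T)) = T(\Ker(S-T))$; this is a space of dimension $r - k'$ where $k' = \dim(\Ker S \cap \Ker(S-T))$, and one checks $k' = n - r$ would force things to collapse. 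I would instead argue: for any $v$, write $Sv$ and $Tv$; then $(S-T)v \in \Image(S-T)$, a space of dimension $r$. So $\Image S \subseteq \Image T + \Image(S-T)$ trivially, giving dimension $\le 2r$ — still not enough. The sharpening comes from the kernel-disjointness: pick a complement $C$ to $\Ker S \oplus \Ker T$ inside... but these need not be disjoint from $\Ker(S-T)$, so instead I would set up the count via $\dim(\Image S + \Image T) = \dim \Image S + \dim \Image T - \dim(\Image S \cap \Image T) = 2r - \dim(\Image S \cap \Image T)$ and separately $\dim(\Image S \cap \Image T) \ge r - (n - r) = 2r - n$, because the map $V \to W \oplus W$, $v \mapsto (Sv, Tv)$ has kernel $\Ker S \cap \Ker T = 0$, hence image of dimension $n$ inside a space where the "diagonal-type" intersection argument bounds how far $Sv$ and $Tv$ can differ. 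Combining, $\dim(\Image S + \Image T) \le 2r - (2r - n) = n$ — again too weak, so the genuine content must use $\rank(S-T) = r$, not just $\le r$.

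Here is the argument I actually expect to work: consider the subspace $X := \{(Sv, Tv) : v \in V\} \subseteq W \oplus W$, which has dimension $n$ since $\Ker S \cap \Ker T = 0$. Its image under the difference map $(w_1, w_2) \mapsto w_1 - w_2$ is $\Image(S-T)$, of dimension $r$, so $X \cap \Delta$ has dimension $n - r$, where $\Delta = \{(w,w)\}$. Now $X \cap \Delta$ consists of pairs $(Sv, Sv)$ with $Sv = Tv$, i.e. $v \in \Ker(S-T)$, so its first coordinates span $S(\Ker(S-T))$, a space of dimension $(n-r) - \dim(\Ker S \cap \Ker(S-T))$. Let $\pi_1, \pi_2$ be the two projections of $X$ to $W$; then $\pi_1(X) = \Image S$ and $\pi_2(X) = \Image T$, both of dimension $r$ (here is where $\rank S = \rank T = r$ enters), and $\Image S + \Image T = \pi_1(X) + \pi_2(X)$, which is contained in the image of $X$ under $(w_1,w_2) \mapsto (w_1, w_2)$ followed by sum — no: rather, $\Image S + \Image T$ is the image of $X + (X$-shifted$)$... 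The clean finish: $\Image S + \Image T = \{w_1 + 0\} + \{0 + w_2\}$ over $X$, and since $\dim X = n$ while the "sum" map $X \to W$, $(w_1,w_2)\mapsto w_1 - w_2$ already captures $r$ dimensions, the complementary data $w_1 + w_2$ (or just $w_1$) ranges over a space of dimension $\le n - $ (dimension of the fiber identification). I would formalize: $\Image S + \Image T \cong X / (X \cap \ker\sigma)$ where $\sigma(w_1,w_2) = (w_1 + w_2, \text{stuff})$; counting, $\dim(\Image S + \Image T) = n - \dim(\text{common kernel contributions}) = 3r - n$ once one verifies the three relevant kernels each contribute $n - r$ and pairwise intersect trivially or in a controlled way. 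The main obstacle will be precisely this bookkeeping: making the inclusion–exclusion among $\Ker S$, $\Ker T$, $\Ker(S-T)$ yield exactly $3r - n$ rather than a weaker bound, which forces using all three rank hypotheses simultaneously and the full strength of $\Ker S \cap \Ker T = 0$. Once the dimension count is pinned down, the statement that both images lie in the common subspace $\Image S + \Image T$ of that dimension is immediate.
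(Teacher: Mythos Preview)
Your proposal circles around the right idea but never lands on it, and the ``bookkeeping'' you defer to the end is in fact the entire content of the lemma. Early on you correctly observe that on $\Ker(S-T)$ the maps $S$ and $T$ agree, hence $\Ker(S-T)\cap\Ker S = \Ker(S-T)\cap\Ker T$. What you fail to notice is that this common intersection is therefore contained in $\Ker S\cap\Ker T = 0$, so your $k$ (and later $k'$) equals $0$. That single observation is the missing step: it says $S$ is \emph{injective} on the $(n-r)$-dimensional space $\Ker(S-T)$, so $S(\Ker(S-T)) = T(\Ker(S-T))$ is an $(n-r)$-dimensional subspace sitting inside $\Image S \cap \Image T$. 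Then the inclusion--exclusion formula you already wrote down,
\[
\dim(\Image S + \Image T) = 2r - \dim(\Image S \cap \Image T) \le 2r - (n-r) = 3r - n,
\]
finishes the proof immediately.

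This is exactly the paper's argument, and it is four lines long. Your detours through $W_0 = S(V)+T(\Ker S)$, the graph $X\subseteq W\oplus W$, and the difference map are all unnecessary; each one re-derives a weak bound because it fails to exploit the injectivity of $S$ on $\Ker(S-T)$. The lower bound $\dim(\Image S\cap\Image T)\ge 2r-n$ you attempt via the map $v\mapsto(Sv,Tv)$ is strictly weaker than $n-r$ (since $n-r\ge 2r-n$ would require $n\ge 3r/2$, which need not hold), so that route cannot succeed. The lesson: all three rank hypotheses and the disjoint-kernel hypothesis are used in one stroke, namely to force $S|_{\Ker(S-T)}$ to be an isomorphism onto its image.
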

\begin{proof} $V_0 = \ker(S - T)$ is $(n-r)$-dimensional.  $S$ and $T$ are injective on $V_0$, since for
$v \in V_0$,  $S(v) = 0$
iff $T(v) = 0$, and $\Ker(S) \cap \Ker(T) = 0$.  
Thus, $S(V_0) = T(V_0)$ is an $(n-r)$-dimensional space in $S(V) \cap T(V)$.
Since $S(V), T(V)$ are $r$-dimensional and overlap in a space of dimension at least $n-r$, $S(V) + T(V)$
has dimension at most $r + r -(n-r) = 3r-n$.
\end{proof}

\begin{proof}[Proof of Proposition \ref{def3-prop2}] Assume that $\dim_K \Ann_{A_1}(g_1, g_2) = 0$.
Since all quadratic forms $g$ in $Kg_1+Kg_2$ are such that $gA_1 \subseteq A_3$ has vector space dimension 3, we have from 
Lemma \ref{lin_alg} with $n=5, r=3$, that $(Kg_1 +Kg_2)A_1 \subseteq A_3$ is at most $4$-dimensional. Consequently, 
$$\dim_K[ A/(g_1,g_2)A]_3 = \dim_K [R/\ci+( g_1,g_2)]_3 \geq 6,$$ contradicting $\EGH_{\sec2,5}(2)$ for  defect 2 quadratic ideals. Hence,  $\dim_K \Ann_{A_1}(g_1, g_2) = 1$.
\end{proof}

\begin{proposition}\label{prop_def3_dimAnn = 0} Let $I=\ci+(g_1,g_2, g_3)$ be a defect three quadratic ideal in $R = K[x_1,\,\ldots,\,x_5]$.   If $\dim_K \Ann_{A_1}(g_1,g_2,g_3) = 0$ then $\Hf_{R/I}(3) \leq 4$.
\end{proposition}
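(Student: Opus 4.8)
The plan is to translate $\Hf_{R/I}(3)\le 4$ into a statement about subspaces of $A_3$, where $A=R/\ci$, and then analyse how the multiplication maps $A_1\to A_3$ by $g_1,g_2,g_3$ interact. Put $U_i:=g_iA_1\subseteq A_3$ for $i=1,2,3$. Since $R/\ci$ is a complete intersection of five quadrics, $\Hf_{R/\ci}(3)=\binom{5}{3}=10$, so $\dim_K\ci_3=25$ and $\dim_K I_3=25+\dim_K(U_1+U_2+U_3)$, while $\dim_K\lpp_3=31$ for $\lpp=(x_1^2,\dots,x_5^2,x_1x_2,x_1x_3,x_1x_4)$; thus $\Hf_{R/I}(3)\le 4$ is equivalent to $\dim_K(U_1+U_2+U_3)\ge 6$. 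Throughout we use the standing hypothesis of this subsection, that every defect two quadratic ideal $J$ with $\ci\subseteq J\subseteq I$ has $\dim_KJ_3=30$ (otherwise $\Hf_{R/I}(3)\le\Hf_{R/J}(3)\le 4$ by Remark~\ref{def3-rmk1}); equivalently, $\dim_K(U_i+U_j)=5$ for all $i\ne j$.

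First I would dispose of the case in which some two quadrics $p,q\in Kg_1+Kg_2+Kg_3$ that, together with a third one, still generate $(g_1,g_2,g_3)A$ satisfy $\dim_K\Ann_{A_1}(p,q)=1$. In that case the annihilating line cannot also kill the third generator, because $\Ann_{A_1}(g_1,g_2,g_3)=0$; so Proposition~\ref{def3-prop1} (equivalently Corollary~\ref{def3-cor3}) gives $\dim_KI_3\ge 30+5-3=32>31$ and we are done. Hence from now on every such pair has vanishing common linear annihilator; in particular the subspaces $K_i:=\Ann_{A_1}(g_i)$ of $A_1$ are pairwise transverse.

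Next I would pin down the dimensions of the $U_i$. The argument proving Remark~\ref{def3-rmk2} (which uses Corollary~\ref{Cor-diff-linear-ann} and the standing hypothesis) shows that, under $\Ann_{A_1}(g_1,g_2,g_3)=0$, no nonzero $g\in Kg_1+Kg_2+Kg_3$ has $\dim_K\Ann_{A_1}(g)=1$; combined with Corollary~\ref{cor-intersection-dim} applied to the almost complete intersection $\ci+(g)$, this forces $\dim_KgA_1\in\{3,5\}$ for every such $g$. If every $g$ in the pencil $Kg_1+Kg_2$ had $\dim_KgA_1=3$, then Proposition~\ref{def3-prop2} would give $\dim_K\Ann_{A_1}(g_1,g_2)=1$, which was just excluded; hence some $g^{\ast}\in Kg_1+Kg_2$ has $\dim_Kg^{\ast}A_1=5$. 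Relabelling so that $g_1=g^{\ast}$, we get that $W:=U_1=g_1A_1$ is a fixed $5$-dimensional subspace of $A_3$; then $\dim_K(U_1+U_2)=\dim_K(U_1+U_3)=5$ forces $U_2,U_3\subseteq W$, so $U_1+U_2+U_3=W$ has dimension only $5$.

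The crux---the only remaining case---is to show that this configuration cannot occur. Writing $\sigma,\tau\colon A_1\to A_1$ for the endomorphisms with $g_2\ell=g_1\sigma(\ell)$ and $g_3\ell=g_1\tau(\ell)$ (which make sense because $U_2,U_3\subseteq g_1A_1$ and $g_1$ is injective on $A_1$), the condition $\dim_KgA_1\in\{3,5\}$ for all $g$ in the span says that every eigenvalue over $\overline K$ of $\sigma$, of $\tau$, and of each combination $\sigma-c\,\id$, $\tau-c\,\id$ has eigenspace of dimension $0$ or $2$; since $\dim_KA_1=5$ is odd this makes $\sigma$ and $\tau$ non-semisimple, each with a Jordan block of size at least two. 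The contradiction should come from playing this rigidity against Gorenstein duality: the configuration pins down $\Hf_{R/I}=(1,5,7,5,0,0)$, which constrains $\Soc(R/I)_2$ and the double annihilator $Q:=\Ann_{A_2}\bigl(\Ann_{A_2}(g_1)\bigr)=\{q\in A_2:qA_1\subseteq W\}$; then, feeding a coincidence of the form $g_iA_1=g_jA_1$ (or $g_iA_2=g_jA_2$) into Lemma~\ref{lemma2}, one extracts a recombined quadric whose annihilator in $A_1$ or $A_2$ is strictly larger, and careful bookkeeping should yield either a pair with one-dimensional linear annihilator (which was already handled) or a defect two sub-ideal $J\subseteq I$ with $\dim_KJ_3=31$, contradicting the standing hypothesis. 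Closing this last gap, in particular controlling precisely how large an annihilator Lemma~\ref{lemma2} produces, is the delicate point, and is where the detailed case analysis occupying the rest of the section comes in.
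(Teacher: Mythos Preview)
Your reductions are correct and track the paper closely: the translation to $\dim_K(U_1+U_2+U_3)\ge 6$, the standing hypothesis via Remark~\ref{def3-rmk1}, the exclusion of $\dim_K\Ann_{A_1}(g)=1$ via Remark~\ref{def3-rmk2}, the use of Proposition~\ref{def3-prop2} to rule out a pencil consisting entirely of rank-$3$ maps, and the final reduction to the configuration $W=g_1A_1$ five-dimensional with $U_2,U_3\subseteq W$. All of that matches the paper.

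The gap is precisely at what you call ``the crux.'' Your outline there (non-semisimplicity from the parity of $5$, socle considerations, an appeal to Lemma~\ref{lemma2}) does not close the case, and your deferral to ``the detailed case analysis occupying the rest of the section'' is a misreading: everything after this proposition treats the complementary hypothesis $\dim_K\Ann_{A_1}(g_1,g_2,g_3)=1$ (Proposition~\ref{prop_def3_dimAnn = 1} and its two Cases), not the situation you are in. So as written the proof is incomplete.

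The paper's actual argument for this last step is a determinant/factorization trick, not duality. With $g_1$ represented by $\mathbb{I}_5$ and $g_2,g_3$ by matrices $\alpha,\beta$ (taken singular of rank $3$ after subtracting scalar multiples of $\mathbb{I}_5$), set $D(x,y,z)=\det(z\mathbb{I}_5-x\alpha-y\beta)$, a degree-$5$ form monic in $z$. Since every singular matrix in this span has rank exactly $3$, all $4\times 4$ minors of $z\mathbb{I}_5-x\alpha-y\beta$ vanish on $\{D=0\}$ and hence lie in $\sqrt{(D)}$. If $D$ were square-free this would force degree-$4$ forms into $(D)$, which is impossible; so $D$ has a repeated factor, and a degree count forces $D$ to have a linear factor $z-ax-by$. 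Replacing $\alpha,\beta$ by $\alpha'=a\mathbb{I}_5-\alpha$, $\beta'=b\mathbb{I}_5-\beta$ produces a pencil in which every nonzero matrix is singular of rank $3$. Lemma~\ref{lin_alg} then forces $\ker\alpha'\cap\ker\beta'\neq 0$, i.e.\ the corresponding pair of quadrics has a one-dimensional common linear annihilator, and Corollary~\ref{def3-cor3} gives $\dim_K I_3\ge 32$, the desired contradiction. This determinantal step is the missing idea in your proposal.
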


\begin{proof} 

First, by Remark \ref{def3-rmk1} we note that it suffices to consider any defect two quadratic ideal $J\subseteq I$ with $ \Hf_{R/J}(3)=5$.

Suppose that $\dim_K \Ann_{A_1}(g_1,g_2,g_3) = 0$.  Then, clearly, no $g_i$,  for $i=1,2,3$ has a $1$-dimensional linear annihilator space in $A$, since, otherwise, by Remark \ref{def3-rmk2}, we obtain that $\dim_K \Ann_{A_1}(g_1,g_2,g_3)=1$, which contradicts our assumption. Thus, for the rest of the proof we may assume that each $g_iA_1$, $i=1,2,3$, is either $3$ or $5$ dimensional. 

If all forms $g$ in $Kg_1+Kg_2+Kg_3$ are such that $\dim_K gA_1 = 3$ then we can find two independent quadratic forms whose linear annihilator spaces intersect in $1$-dimensional space, and the result follows from Corollary \ref{def3-cor3}. 

Let $g_1A_1$ be a $5$-dimensional subspace of $A_3$ and for every $g \in Kg_2+Kg_3$, $gA_1$ has dimension either $3$ or $5$. 

We complete the proof by obtaining a contradiction. We assume that $\Hf_{R/I}(3) = 5$. In other words, the space $W = (Kg_1 + Kg_2 + Kg_3)A_1 \subseteq A_3$ is 5-dimensional. Then we get $W= g_1A_1 = (Kg_2 +Kg_3)A_1$.

Consider the multiplication maps by $g_1, g_2$ and $g_3$ from $A_1$ to the subspace $W$ of $A_3$. By adjusting the bases of $A_1$ and $W$ we can assume the matrix of $g_1$ is the identity matrix $\mathbb{I}_5$ of size $5$. Denote the matrices of $g_2$ and $g_3$ by $\alpha$ and $\beta$, respectively. We can assume that $\alpha$ and $\beta$ are both singular, and so
have rank $3$, by subtracting the suitable multiples of $\mathbb{I}_5$ from them if they are not singular.

We see that all matrices $z\mathbb{I}_5 + x\alpha + y\beta$ must have at most two eigenvalues, otherwise we can form a linear combination whose kernel is $1$-dimensional, which corresponds to a quadratic form with $1$-dimensional linear annihilator space. Then there are two main cases: one is that every matrix in the space spanned by $\mathbb{I}_5, \alpha$ and $\beta$ has one eigenvalue. The other is that almost all matrices in the form $z\mathbb{I}_5+ x\alpha + y\beta$ have two eigenvalues, since the subset with at most one eigenvalue is Zariski closed. 

Define $D(x,y,z) = \det(z\mathbb{I}_5 - x\alpha - y\beta)$, a homogeneous polynomial in $x,y,z$ of degree $5$ that is monic in $z$. Note that  $D$ is also the characteristic polynomial, in $z$,  of $x\alpha+y\beta$.  Notice that the singular matrices in the subspace of $5\times5$ matrices spanned by $\mathbb{I}, \alpha$ and $\beta$ are defined by the vanishing of $D$. 

If the determinant $D$ is square-free (as the characteristic polynomial in $z$), then the ideal $(D)$ is a radical ideal and it cannot contain a nonzero polynomial of degree less than $5$, which contradicts the fact that all size  $4$ minors of a singular matrix must vanish, since in our situation these singular matrices have rank $3$.  Therefore the size $4$ minors, whose degrees are at most $4$, are in the radical $(D)$. 

If the determinant $D$ is not square-free,  then its squared factor must be 
linear or quadratic:  in the latter case the other factor is linear, so that in either case $D$ has a linear
factor,  say  $z - ax - by$. 

Consider the independent matrices   $\alpha' =   a\mathbb{I}_5 - \alpha$,   $\beta'  = b\mathbb{I}_5 - \beta$.  Then we think of any linear
combination of them, say  $r\alpha' + s\beta' = r(a\mathbb{I}_5 - \alpha) + s(b\mathbb{I}_5 - \beta) = (ar+bs)\mathbb{I}_5 - r\alpha  - s\beta.$
 As $z-ax-by$ is a factor of $D(x,y,z)$,  and
hence,  $D$ vanishes for $x = r,  y = s, z = ar+bs.$ This means that every linear combination of  $\alpha'$ and
$\beta'$ is singular.  Therefore, we can replace  $\alpha, \beta$  by  $\alpha'$ and  $\beta'$ and so we can assume 
that we are
in the case where every linear combination of the two non-identity matrices is singular, and, if not $0$,
of rank $3$.  By Lemma~\ref{lin_alg}, this implies that the kernels of $\alpha'$ and  $\beta'$ cannot be disjoint, so we are done by Proposition \ref{def3-prop2} and Corollary \ref{def3-cor3}.

\end{proof}

%
%
%
%
%

In order to prove $\EGH_{\sec2,5}(2)$ for every defect three quadratic ideal $I = \ci+(g_1,g_2,g_3)$ in 
$R=K[x_1,\,\ldots,\, x_5]$ we must also discuss the cases when there is a nonzero linear form 
$L \in \Ann_{A} (g_1, g_2, g_3)$.
\begin{proposition}\label{prop_def3_dimAnn = 1} Let $I = \ci+(g_1,g_2,g_3)$ be a defect three quadratic ideal in $R$. 
If $\Ann_{A_1}(g_1, g_2, g_3)$ is a $1$-dimensional $K$-subspace of $A_1$,  say $KL$,  then 
\[ \Hf_{R/I}(3) = 4.\]
\end{proposition}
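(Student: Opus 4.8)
The plan is to reduce to a concrete linear-algebra problem in the Gorenstein ring $A=R/\ci$ and then exploit the extra linear annihilator $L$ together with the results already established for smaller defect. We always have $\Hf_{R/I}(3)\leq 4$ by Remark~\ref{def3-rmk1} (once we reduce to the case where every defect two $J\subseteq I$ containing $\ci$ has $\dim_K J_3=30$, i.e. $\Hf_{R/J}(3)=5$; if not, we are immediately done). So the whole content is the reverse inequality $\Hf_{R/I}(3)\geq 4$, i.e. $\dim_K W\leq 11$ where $W=(Kg_1+Kg_2+Kg_3)A_1\subseteq A_3$ and $\dim_K A_3=10$... more precisely we must show $\dim_K I_3 \geq 31$, equivalently $\dim_K\bigl((g_1,g_2,g_3)A\bigr)_3 = \dim_K W \leq 11-1=\dim_K A_3 - (\,\dim_K A_3 - \dim_K W\,)$; being careful with the bookkeeping, the target is $\dim_K W \leq \dim_K A_3 - \bigl(\Hf_{R/I}(3)\bigr)$, so it suffices to show $\Hf_{R/I}(3)\neq 5$. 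Thus I would argue by contradiction, assuming $\Hf_{R/I}(3)=5$, so that $W$ is an $5$-codimensional... rather, $\dim_K W = \dim_K A_3 - 5 = 5$.

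First I would record the consequences of $L\in\Ann_{A_1}(g_1,g_2,g_3)$. Multiplication by each $g_i$ factors through $A_1/KL$, a $4$-dimensional space, so each image $g_iA_1$ has dimension at most $4$; since the case of a linear annihilator of an individual $g_i$ forces (via Remark~\ref{def3-rmk2} and Corollary~\ref{Cor-diff-linear-ann}) a jump we can exploit, I would first dispose of the subcase where some $g_iA_1$ (or some $gA_1$ for $g$ in the span) is already as large as possible in a way that produces $\dim_K I_3\geq 32$. In the remaining subcase every $g_iA_1$ is at most $3$-dimensional. Then I would set up the three multiplication maps $\phi_{g_1},\phi_{g_2},\phi_{g_3}\colon A_1/KL \to A_3$ from a $4$-dimensional space, whose images all lie in the $5$-dimensional $W$, and argue that if all of $(Kg_1+Kg_2+Kg_3)A_1$ fills out a $5$-dimensional space then three $4\times(\leq 3)$-type maps with overlapping small images cannot span so much: the analogue of Lemma~\ref{lin_alg} (now with $n=4$, $r\leq 3$) bounds $\dim_K\bigl(\phi_{g_i}(A_1/KL)+\phi_{g_j}(A_1/KL)\bigr)\leq 3r-4\leq 5$, and pushing this to the three-fold sum, combined with $\EGH_{\sec2,5}(2)$ for defect two quadratic ideals (Remark~\ref{def2-quad-n5}) applied to $\ci+(g_i,g_j)$, should pin down $\dim_K\Ann_{(A_1/KL)}(g_i,g_j)$ and then force $\dim_K W\leq 4<5$, a contradiction — unless we are in a degenerate configuration.

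The main obstacle, as in Proposition~\ref{prop_def3_dimAnn = 0}, will be the degenerate eigenvalue analysis. After normalizing (choosing coordinates on $A_1/KL$ and on $W$ so that one map, say $\phi_{g_1}$ restricted appropriately, is as simple as possible — e.g. identity on its $3$-dimensional image after we quotient by common kernels) one is led to study a pencil/net of $4\times 4$ (or $4\times 5$) matrices all of rank $\leq 3$ and to control their characteristic polynomials; the argument that the determinant cannot be square-free (else a radical ideal would contain a low-degree form vanishing on the rank-$3$ locus) and the treatment of the non-square-free case via peeling off a linear factor to reduce to "all combinations singular," then applying Lemma~\ref{lin_alg} to get non-disjoint kernels, will have to be redone in this slightly smaller setting. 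I expect this to go through essentially verbatim from the proof of Proposition~\ref{prop_def3_dimAnn = 0}, with $n=5$ replaced by $n=4$ and the final appeal being to Proposition~\ref{def3-prop2} and Corollary~\ref{def3-cor3} (or their evident analogues modulo $L$). Once every case yields either $\dim_K I_3\geq 32$ or a contradiction with $\Hf_{R/I}(3)=5$, we conclude $\Hf_{R/I}(3)=4$.
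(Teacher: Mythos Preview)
Your approach has real gaps and diverges entirely from the paper's. First, the bookkeeping is tangled: you assert that Remark~\ref{def3-rmk1} already gives $\Hf_{R/I}(3)\leq 4$ and that ``the whole content is the reverse inequality $\geq 4$,'' but Remark~\ref{def3-rmk1} only yields $\leq 4$ when some defect-two $J\subseteq I$ has $\dim_K J_3\geq 31$; in the residual case neither inequality is known, and your subsequent contradiction argument (assume $\Hf_{R/I}(3)=5$) is aimed at the \emph{upper} bound, not the lower one. You never address $\geq 4$, yet the proposition claims equality. Second, the matrix argument does not port over from Proposition~\ref{prop_def3_dimAnn = 0}. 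There the crux was that some $g_iA_1$ is $5$-dimensional, so one multiplication map $A_1\to W$ is an isomorphism and can be normalized to $\mathbb{I}_5$; the determinant $D(x,y,z)$ and the eigenvalue dichotomy live on honest $5\times 5$ matrices. Here $L$ annihilates every $g_i$, so each multiplication map factors through the $4$-dimensional $A_1/KL$ into the (assumed) $5$-dimensional $W$: the maps are $5\times 4$, there is no identity to normalize to, no square determinant, and Lemma~\ref{lin_alg} does not apply since it requires equal-dimensional source and target. The sentence ``I expect this to go through essentially verbatim'' is exactly where the argument fails.

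The paper's route is structural rather than linear-algebraic. It first proves that the hypothesis forces $L$ to divide one of the $f_i$: since $g_1,g_2,g_3\in\Ann_{A_2}(L)$ one gets $\dim_K[A/LA]_3\geq 3$, whereas if the images $\bar f_i$ modulo $L$ formed an almost complete intersection in four variables, Francisco's result (Theorem~\ref{EGH(d)-ci}) would give $\dim_K[A/LA]_3\leq 2$. After a change of variables one may therefore take $L=x_5$ and either $f_5=x_1x_5$ or $f_5=x_5^2$. In each case the remaining generators can be normalized explicitly, and $\Hf_{R/I}(3)=4$ is then \emph{computed} directly: in the first case by observing that $I$ contains every degree-$3$ monomial divisible by $x_1$, reducing to a complete intersection of four quadrics in $K[x_2,x_3,x_4,x_5]$; in the second via Gorenstein duality and a colon-ideal calculation. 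This structural reduction---that the common linear annihilator must already be a factor of one of the regular-sequence elements---is the missing idea in your proposal.
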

\begin{claim} One of the  quadratic forms $f_i$ in the regular sequence has the linear factor $L$.
\end{claim}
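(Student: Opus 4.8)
The plan is to prove the equivalent assertion that $\ci$ contains a nonzero quadric divisible by $L$. This suffices: since $\ci$ is generated in degree two and $\dim_K\ci_2=5$, any $K$-basis of $\ci_2$ is a minimal generating set of $\ci$, and, $\ci$ having height $5$ in the Cohen--Macaulay ring $R$, any such basis is a regular sequence; so we may complete the divisible quadric to a basis of $\ci_2$ and take it to be one of the $f_i$.

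After a linear change of variables we may assume $L=x_5$; write $\bar g$ for the image of $g$ modulo $x_5$, so $\bar R:=R/x_5R=K[x_1,\ldots,x_4]$ and $A/x_5A=\bar R/(\bar f_1,\ldots,\bar f_5)$. If $\bar f_1,\ldots,\bar f_5$ are linearly dependent over $K$, then some nonzero combination $\sum c_jf_j$ lies in $x_5R$ and we are done; so assume they are $K$-linearly independent. Since $A/x_5A$ is Artinian, the linear system $\langle\bar f_1,\ldots,\bar f_5\rangle$ is base-point free on $\mathbb P^3$, so by a general-position argument (choosing a generic member at each step lowers the dimension of the common zero locus by one) we may replace $f_1,\ldots,f_5$ by another basis of $\ci_2$ --- which changes neither $\ci$, nor $A$, nor the hypothesis $L\in\Ann_{A_1}(g_1,g_2,g_3)$ --- for which $\bar f_1,\bar f_2,\bar f_3,\bar f_4$ form a regular sequence in $\bar R$. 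Then $\bar f_5\notin(\bar f_1,\ldots,\bar f_4)$, and by Lemma~\ref{Chen3.3} the forms $\bar f_1,\ldots,\bar f_4$ admit no nonzero linear syzygy.

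Now I would exploit that $x_5g_i\in\ci$: write $x_5g_i=\sum_{j=1}^{5}p_{ij}f_j$ with linear $p_{ij}$ for $i=1,2,3$, and reduce modulo $x_5$ to obtain three syzygies $\sum_{j=1}^{5}\bar p_{ij}\bar f_j=0$. A short cancellation argument, using that $g_1,g_2,g_3$ are $K$-linearly independent modulo $\ci$, shows that each $\bar p_{i5}\neq 0$ and that $m_1:=\bar p_{15},\,m_2:=\bar p_{25},\,m_3:=\bar p_{35}$ are $K$-linearly independent (a dependence would produce a nonzero linear syzygy of $\bar f_1,\ldots,\bar f_4$). Writing the relations as $m_i\bar f_5=\sum_{j=1}^{4}a_{ij}\bar f_j$ with linear $a_{ij}$, for each pair $(i_1,i_2)$ the identity $m_{i_1}(m_{i_2}\bar f_5)=m_{i_2}(m_{i_1}\bar f_5)$ gives a syzygy $\sum_j(m_{i_1}a_{i_2 j}-m_{i_2}a_{i_1 j})\bar f_j=0$ of $\bar f_1,\ldots,\bar f_4$ with quadratic coefficients, which by Lemma~\ref{Chen3.3} is carried by a skew-symmetric $4\times 4$ matrix of scalars. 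Substituting these into the cyclic (cocycle) identity in $m_1,m_2,m_3$ yields a syzygy of $\bar f_1,\ldots,\bar f_4$ with linear coefficients, hence zero by Lemma~\ref{Chen3.3}; since $m_1,m_2,m_3$ are $K$-independent, all three skew-symmetric matrices vanish, so $m_{i_1}a_{i_2 j}=m_{i_2}a_{i_1 j}$ for all indices. As the $m_i$ are pairwise non-proportional linear forms in the UFD $\bar R$, this forces $a_{ij}=c_jm_i$ with $c_j\in K$, whence $m_i\bar f_5=m_i\!\left(\sum_j c_j\bar f_j\right)$ and therefore $\bar f_5=\sum_{j=1}^{4}c_j\bar f_j$ --- contradicting the $K$-independence of $\bar f_1,\ldots,\bar f_5$. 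Thus that independence is impossible, and the Claim follows as in the first paragraph.

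The technical heart, and the step I expect to be the main obstacle, is the two-stage descent with Lemma~\ref{Chen3.3} in the last paragraph --- the cocycle computation that simultaneously kills all three skew-symmetric matrices --- which parallels the proof of Proposition~\ref{distinct-linear-ann} but requires careful bookkeeping of the pair/triple relations. A secondary point needing care is the general-position reduction to the case where $\bar f_1,\ldots,\bar f_4$ is a regular sequence, i.e., the assertion that a generic $4$-dimensional subspace of a base-point-free linear system of quadrics on $\mathbb P^3$ defines a zero-dimensional scheme.
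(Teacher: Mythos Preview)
Your argument is correct, but it takes a substantially different route from the paper's. The paper's proof is a short dimension count: since $g_1,g_2,g_3\in\Ann_{A_2}(L)$ one has $\dim_K LA_2\le 7$, hence $\dim_K[A/LA]_3\ge 3$; on the other hand, if the images $\bar f_1,\ldots,\bar f_5$ were $K$-independent in $\bar R=K[x_1,\ldots,x_4]$ they would generate an almost complete intersection, and Francisco's Theorem~\ref{EGH(d)-ci} (applied in four variables) gives $\dim_K[\bar R/\bar\ci]_3\le 2$, a contradiction. That is the entire argument.

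Your approach instead mimics the Koszul-cascade of Proposition~\ref{distinct-linear-ann}: you build linear syzygies of $\bar f_1,\ldots,\bar f_5$ from the relations $x_5g_i\in\ci$, pass to pairwise quadratic syzygies of $\bar f_1,\ldots,\bar f_4$, use Lemma~\ref{Chen3.3} to encode them by scalar skew matrices, kill those matrices via the cocycle identity in $m_1,m_2,m_3$, and deduce $\bar f_5\in K\bar f_1+\cdots+K\bar f_4$. This is longer and more delicate, but it is self-contained --- it avoids invoking Francisco's theorem and relies only on Lemma~\ref{Chen3.3}. The paper's proof, by contrast, trades that syzygy bookkeeping for a single appeal to the almost-complete-intersection case of EGH. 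Your secondary concern about the general-position reduction is harmless: since $\bar\ci$ is $\mathfrak m$-primary in $\bar R$ and $K$ may be taken infinite (Hilbert functions are insensitive to field extension, and the paper already uses eigenvalue arguments elsewhere), a generic four-dimensional subspace of $K\bar f_1+\cdots+K\bar f_5$ is a regular sequence by graded prime avoidance.
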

\begin{proof}[Proof of claim]
As $g_1, g_2, g_3 \in \Ann_{A_2}(L) \subseteq A_2$ for $L \in \Ann_{A_1}(g_1, g_2, g_3)$ we know that $$\dim_K \Ann_{A_2}(L) \geq 3.$$ 
This tells us that $\dim_K LA_2 \leq 7,$ which implies 
\begin{eqnarray}\label{dim_least} \dim_K (A_3/LA_2)  = \dim_K[A/LA]_3 \geq 3 \end{eqnarray}
as $\dim_K A_3 = 10$.

Assume that $L=x_5$ and let $\bar{f}_i$ be the image of $f_i$ modulo $x_5$ .

Suppose that $\bar{\ci} = (\bar{f}_1, \bar{f}_2, \bar{f}_3, \bar{f}_4, \bar{f}_5)$ is an almost complete intersection in 
$K[x_1, x_2, x_3, x_4]$.
Thus, 

\[A/LA \cong \frac{K[x_1,\,\ldots,\,x_5]}{\ci+(x_5)} \cong  \frac{K[x_1, x_2, x_3, x_4]}{ \bar{\ci} }.\]

However, using the Francisco's result for almost complete intersections \cite{Fr}, we know that 
\[\dim_K \Big[  \displaystyle\frac{K[x_1, x_2, x_3, x_4]}{ \bar{\ci} } \Big]_3 \leq 2 = \dim_K \Big[  \displaystyle\frac{K[x_1, x_2, x_3, x_4]}{ (x_1^2,\,x_2^2,\,x_3^2,\,x_4^2,\,x_1x_2)} \Big]_3. \]
This contradicts (\ref{dim_least}).

Hence the images of $f_i$ modulo $L$ form a regular sequence in $K[x_1,\,\ldots,\,x_4]$, that is, one of them has a linear factor $x_5$. 
\end{proof}

As a result of the claim, after a suitable change of variables, we may assume that the linear annihilator is $L = x_5$ and may consider $I$ in two possible forms:  either $I$ is in the form of (\ref{case1-x1x5}) in \textit{Case 1} below, where $f_1,\,f_2,\,f_3,\,f_4,\,x_1x_5$ is the regular sequence, or $I$ is as in (\ref{case2-x_5x_5}) in \textit{Case 2} below, where $f_1,\,f_2,\,f_3,\,f_4,\,x_5^2$ form a quadratic regular sequence in $I$.  
\bigskip
\\ \noindent \textit{Case 1.} Suppose that $f_5 = x_1x_5$. Then we can assume that $g_1 = x_1x_2,\,g_2=x_1x_3,\,g_3=x_1x_4$.
Furthermore, after we alter the $f_i$ by getting rid of all the terms containing $x_1$ except $x_1^2$, we may assume that the defect three quadratic ideal $I$ looks like 
\begin{equation}\label{case1-x1x5}
I = (f_1,\,f_2,\,f_3,\,f_4+cx_1^2,\,x_1x_5,\,x_1x_2,\,x_1x_3,\, x_1x_4),
\end{equation}
where $f_1, f_2, f_3, f_4$ form a regular sequence in $K[x_2, x_3, x_4, x_5]$ and $c\in K$.

\begin{proposition} Let $I = (f_1,\,f_2,\,f_3,\,f_4+cx_1^2,\,x_1x_5,\,x_1x_2,\,x_1x_3,\,x_1x_4)$ 
be a defect three quadratic ideal in $R$ where $f_1, f_2, f_3, f_4$ is an $K[x_2,x_3,x_4,x_5]$-sequence. Then
\[ \Hf_{R/I}(3) = 4 = \Hf_{R/\lpp}(3)\]
where $\lpp = (x_1^2,\,\ldots,\,x_5^2,\,x_1x_2,\,x_1x_3,\,x_1x_4)$.
\end{proposition}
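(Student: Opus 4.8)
The plan is to compute $\dim_K I_3$ directly by decomposing $I_3$ into contributions from the regular sequence part and the extra quadrics, exploiting the special shape of $I$ in \eqref{case1-x1x5}. Write $I = J + (f_4 + cx_1^2)$ where $J = (f_1,f_2,f_3,x_1x_5,x_1x_2,x_1x_3,x_1x_4)$; but it is cleaner to first separate the $x_1$-part from the rest. Since $f_1,f_2,f_3,f_4$ live in $R' = K[x_2,x_3,x_4,x_5]$ and form a regular sequence there, the ideal $\ci' = (f_1,f_2,f_3,f_4)R'$ has $R'/\ci'$ a complete intersection of four quadrics, so its Hilbert function is that of $K[x_2,x_3,x_4,x_5]/(y_1^2,\dots,y_4^2)$: namely $1,4,6,4,1,\dots$. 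First I would observe that modulo the monomials $x_1x_2,x_1x_3,x_1x_4,x_1x_5$, every element of $R_3$ is congruent to an element of $R'_3 \oplus Kx_1^3$ (the only degree-3 monomials surviving are those in $x_2,x_3,x_4,x_5$ together with $x_1^3$), so I can analyze $I$ inside this small quotient.

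The key computation is then: in $R'' := R/(x_1x_2,x_1x_3,x_1x_4,x_1x_5) \cong R' \times_K K[x_1]$ (fiber product over $K$ in degree $0$; more precisely $R'' = R' \oplus x_1 K[x_1]$ as graded vector spaces with $x_1 \cdot R'_{\ge 1} = 0$), the image of $I$ is generated by the images of $f_1,f_2,f_3,f_4+cx_1^2$. In degree $3$, $R''_3 = R'_3 \oplus Kx_1^3$, which is $20 + 1 = 21$-dimensional. I would compute $\dim_K(\bar I)_3$ where $\bar I$ is the image of $I$ in $R''$. The piece coming from $R'$: the images $\bar f_1,\dots,\bar f_4$ generate $\ci'$ in $R'$, and $f_4 + cx_1^2$ contributes $\bar f_4 \in R'_3$-multiples plus the term $cx_1^3 \cdot(\text{scalar})$ from multiplying by $x_1$. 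Explicitly, $(\bar I)_3 = \ci'_3 \oplus (\text{span of } x_1(f_4+cx_1^2)) = \ci'_3 \oplus K(cx_1^3)$ when $c\neq 0$ (note $x_1\cdot f_1 = x_1\cdot f_2 = \cdots = 0$ in $R''$ since $f_1,f_2,f_3\in R'_{\ge1}$ annihilate $x_1$, and $x_1(f_4+cx_1^2) = cx_1^3$), and $(\bar I)_3 = \ci'_3 \oplus 0$ when $c = 0$. Since $\dim_K \ci'_3 = \dim_K R'_3 - \Hf_{R'/\ci'}(3) = 20 - 4 = 16$, I get $\dim_K(\bar I)_3 = 17$ if $c\neq 0$ and $16$ if $c = 0$. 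Finally, $\dim_K I_3 = \dim_K(\bar I)_3 + \dim_K(I \cap (x_1x_2,x_1x_3,x_1x_4,x_1x_5))_3$. The intersection in degree $3$ is exactly $(x_1x_2,x_1x_3,x_1x_4,x_1x_5)_3$ itself, which has dimension $4 \cdot 5 = 20$ minus the overlaps: the span of $\{x_1 x_i x_j : 2\le i\le j\le 5,\ i\ \text{or}\ j \in\{2,3,4,5\}\}$... more simply, $(x_1x_2,x_1x_3,x_1x_4,x_1x_5)_3 = x_1\cdot(x_2,x_3,x_4,x_5)_2 \oplus Kx_1^2\cdot(x_2,x_3,x_4,x_5)$, which has dimension $\dim_K(x_2,\dots,x_5)R_2 = \binom{6}{2} - 1 = $ — I would just count: all cubics divisible by $x_1$ except $x_1^3$, that is $\binom{7}{2} - \binom{3}{2} - 1 = 21 - 3 - 1$... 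I will pin down the exact count ($= 14$) in the writeup. Combining, $\dim_K I_3 = 17 + 14 = 31$ when $c\neq 0$ and $16 + 14 = 30$ when $c=0$; and since $\Hf_{R/\lpp}(3) = 31$ for $\lpp = (x_1^2,\dots,x_5^2,x_1x_2,x_1x_3,x_1x_4)$...

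Here I realize the arithmetic must be reconciled with the target $\Hf_{R/I}(3) = 4$, i.e. $\dim_K I_3 = \dim_K R_3 - 4 = 35 - 4 = 31$. So the honest claim forces $c\neq 0$ to give $31$ and $c = 0$ to give $32$ — meaning I have an off-by-counting error to fix, and the genuinely careful bookkeeping of which monomials survive (in particular whether $x_1^2 \cdot x_i$ for $i\ge 2$ already lies in $I$ via $x_1\cdot x_1x_i$, which it does) is the step I expect to be the main obstacle. The right approach is: pass to $\bar R := R/(x_1x_2,x_1x_3,x_1x_4,x_1x_5,x_1^3)$ if $c = 0$, or keep $x_1^3$ if $c\neq 0$, exploiting that $f_4 + cx_1^2$ multiplied by $x_1$ gives $cx_1^3$, and compute $\Hf_{R/I}(3)$ as $\Hf_{\bar R'/\ci'}(3)$ plus a correction of $0$ or $1$ coming from the $x_1^3$ slot. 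The final step is to check $\Hf_{R/\lpp}(3) = 4$ directly by counting the monomials of degree $3$ not in $\lpp$ — these are $x_2^3, x_2^2x_3, \ldots$ restricted appropriately, a short finite count — and to confirm the two numbers agree, splitting into the cases $c\neq 0$ and $c=0$ and, in the latter case, arguing that $\ci$ may be re-chosen (or $f_4$ re-chosen) so that the count still comes out to $4$, which is where the hypothesis that $f_1,f_2,f_3,f_4$ is an $R'$-sequence is used in full strength.

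I therefore organize the proof as: (1) reduce modulo $(x_1x_2,x_1x_3,x_1x_4,x_1x_5)$ and identify $R_3$ modulo this ideal with $R'_3 \oplus Kx_1^3$; (2) show $I_3$ modulo this ideal equals $\ci'_3$ plus possibly $Kcx_1^3$, using $x_1 f_i \equiv 0$ for $i\le 3$ and $x_1(f_4+cx_1^2) = cx_1^3$; (3) compute $\dim_K\ci'_3 = 16$ from the complete-intersection Hilbert function of $R'/\ci'$; (4) add back $\dim_K(x_1x_2,x_1x_3,x_1x_4,x_1x_5)_3$ via the direct sum $x_1(x_2,x_3,x_4,x_5)R_2$; (5) conclude $\Hf_{R/I}(3) = 4$ in all cases, reconciling the $c = 0$ case by a change of generators. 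The main obstacle is the precise degree-$3$ monomial bookkeeping in steps (2)–(4) — making sure no cubic is double-counted and that $x_1^2 x_i \in I$ for $i \ge 2$ is correctly accounted for — and correctly handling the bifurcation on whether $c = 0$; everything else is routine dimension counting using the complete-intersection Hilbert series and Francisco's theorem is not even needed here (it was used only in the Claim).
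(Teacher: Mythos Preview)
Your approach is the same as the paper's: reduce modulo the monomials $x_1x_2,\ldots,x_1x_5$ to identify $(R/I)_3$ with a piece of $K[x_2,x_3,x_4,x_5]/(f_1,\ldots,f_4)$ in degree~3, plus possibly the class of $x_1^3$. Your dimension counts are also correct: $\dim_K I_3 = 31$ when $c\neq 0$ and $\dim_K I_3 = 30$ when $c=0$, giving $\Hf_{R/I}(3)=4$ and $5$ respectively. The paper's short proof simply observes that every cubic divisible by $x_1$ lies in $I$ (including $x_1^3$, via $x_1(f_4+cx_1^2)-x_1f_4 = cx_1^3$), so $(R/I)_3 \cong (R'/\ci')_3$ has dimension $4$.

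The genuine gap is your plan for the $c=0$ case. You propose to ``reconcile the $c=0$ case by a change of generators,'' but this cannot work: when $c=0$ the Hilbert function value is genuinely $5$, not $4$, and no rewriting of generators changes that. What you are missing is that $c=0$ is \emph{excluded by the hypothesis} that $I$ is a defect three quadratic ideal. Indeed, if $c=0$ then every quadric in $I_2$ has the form $\sum a_if_i + x_1\sum b_jx_j$ with the $f_i\in K[x_2,\ldots,x_5]$ and $j\ge 2$, hence lies in the height-four prime $(x_2,x_3,x_4,x_5)$; consequently $I$ cannot contain a regular sequence of length~$5$. (This is the point where the ``defect three'' hypothesis is used, not merely the $R'$-regularity of $f_1,\ldots,f_4$.) Once you note this, your argument for $c\neq 0$ is complete and coincides with the paper's. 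There is no off-by-one error to hunt down; your count of $14$ for $\dim_K(x_1x_2,\ldots,x_1x_5)_3$ is correct (it is all degree-$3$ monomials divisible by $x_1$ except $x_1^3$).
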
 
\begin{proof} One can easily see that $I$  contains all cubic monomials divisible by $x_1$ since  $x_1x_i \in I$ for all $i=2,3,4,5$ and $f_4$ is a quadratic form in $K[x_2,x_3,x_4,x_5]$, therefore $x_1f_4\in I$ and so is $x_1^3$. Thus, the Hilbert functions of 
$R/I$ and $k[x_2,x_3,x_4,x_5] \big/ I\cap K[x_2,x_3,x_4,x_5]$ agrees in degree $3$. So 
 $\Hf_{R/I}(3) = \Hf_{K[x_2,x_3,x_4,x_5] \big/ I\cap K[x_2,x_3,x_4,x_5]}(3) = \Hf_{K[x_2,x_3,x_4,x_5] 
 \big/ (f_1,\,f_2,\,f_3,\,f_4)}(3) = 4$
\end{proof}
\bigskip
\noindent\textit{Case 2.} Suppose that $f_5=x_5^2$ by altering the variables and generators, and then we can assume that $g_1= x_1x_5$, $g_2=x_2x_5$, $g_3=x_3x_5$.
As we did in the case above, we get rid of all the terms containing $x_5$ except $x_4x_5$ in the $f_i$, and so the defect three quadratic ideal can be written as follows:
\begin{equation}\label{case2-x_5x_5}
I = (f_1,\,f_2,\,f_3,\,f_4+cx_4x_5,\,x_5^2,\,x_1x_5,\,x_2x_5,\,x_3x_5),
\end{equation}
 where $f_1,\,f_2,\,f_3,\,f_4$ form a regular sequence in $K[x_1, x_2, x_3, x_4]$ and $c\in K$.


\begin{lemma} Let $\mathfrak a = (f_1,\,f_2,\,f_3,\,f_4+x_4x_5,\,x_5^2):(x_1x_5,\,x_2x_5,\,x_3x_5) $ be the colon ideal in $R$. Then
we have  $\Hf_{R/\mathfrak{a}}(2) =6.$
\end{lemma}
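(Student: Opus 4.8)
The plan is to pin down the degree-two piece $\mathfrak{a}_2$ exactly. Write $S = K[x_1,x_2,x_3,x_4]$, let $\mathfrak{c} = (f_1,f_2,f_3,f_4)\subseteq S$ and $\mathfrak{q} = (f_1,f_2,f_3,f_4+x_4x_5,x_5^2)$, so that $\mathfrak{a} = \mathfrak{q}:(x_1x_5,x_2x_5,x_3x_5)$ and $\mathfrak{q}\subseteq\mathfrak{a}$ trivially. First I would record two easy memberships: $x_5\in\mathfrak{a}$, since $x_5\cdot(x_ix_5)=x_ix_5^2\in(x_5^2)\subseteq\mathfrak{q}$ for $i=1,2,3$; and then, since $f_4+x_4x_5\in\mathfrak{q}$ while $x_4x_5\in x_5R_1\subseteq\mathfrak{a}$, also $f_4\in\mathfrak{a}$. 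Hence $\mathfrak{a}_2$ contains $V := x_5R_1 + \Span_K\{f_1,f_2,f_3,f_4\}$, which has dimension $5+4 = 9$ (the $x_5$-monomials span a complement of $S_2$ in $R_2$, and $f_1,\dots,f_4$ are linearly independent as part of a regular sequence of quadrics). This already gives $\Hf_{R/\mathfrak{a}}(2)\le 15-9 = 6$, so the real content is the reverse inclusion $\mathfrak{a}_2\subseteq V$.

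For this, take $q\in\mathfrak{a}_2$ and write $q = q_0 + x_5\ell$ with $q_0\in S_2$ and $\ell\in R_1$; since $x_5\ell\in x_5R_1\subseteq\mathfrak{a}$ we may replace $q$ by $q_0\in\mathfrak{a}\cap S_2$. The heart of the argument is the equivalence
\[
q_0\in\mathfrak{a} \iff q_0x_1,\ q_0x_2,\ q_0x_3\in\mathfrak{c}\ \text{ in } S.
\]
I would prove this by reducing modulo $x_5^2$. Since $(x_5^2)\subseteq\mathfrak{q}$, $q_0x_ix_5\in\mathfrak{q}$ iff its image in $R/(x_5^2)\cong S\oplus Sx_5$ lies in the image of $(f_1,f_2,f_3,f_4+x_4x_5)$; expanding a general element of that ideal modulo $x_5^2$ shows it has the form $\alpha + (a_4x_4+\gamma)x_5$, where $\alpha=\sum_{j=1}^4 a_jf_j$, $a_4$ is its $f_4$-coefficient, and $\gamma\in\mathfrak{c}$. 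Because $q_0x_ix_5 = 0 + (q_0x_i)x_5$, matching components forces $\alpha = 0$ and $q_0x_i = a_4x_4+\gamma$; and $\sum a_jf_j = 0$ together with $f_1,\dots,f_4$ being a regular sequence forces $a_4\in(f_1,f_2,f_3)$ (as $f_4$ is a nonzerodivisor on $S/(f_1,f_2,f_3)$), so $q_0x_i\in\mathfrak{c}$. The converse is immediate, taking all $a_j=0$.

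To finish I would pass to the Artinian Gorenstein algebra $\bar S := S/\mathfrak{c}$, a complete intersection of four quadrics, with socle degree $4$ and Hilbert function $(1,4,6,4,1)$. The three conditions $q_0x_i\in\mathfrak{c}$ say precisely that $\bar q_0\in\bigl(\Ann_{\bar S}(x_1,x_2,x_3)\bigr)_2$, and Gorenstein duality gives $\dim_K\bigl(\Ann_{\bar S}(x_1,x_2,x_3)\bigr)_2 = \dim_K\bigl(\bar S/(x_1,x_2,x_3)\bar S\bigr)_2$. But $\bar S/(x_1,x_2,x_3)\bar S = K[x_4]/(\bar f_1,\dots,\bar f_4)$ with the images $\bar f_i\in K[x_4]_2$ not all zero (otherwise $\mathfrak{c}\subseteq(x_1,x_2,x_3)$ and $S/\mathfrak{c}$ would fail to be Artinian), hence equals $K[x_4]/(x_4^2)$, which vanishes in degree $2$. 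So $\bar q_0 = 0$, i.e. $q_0\in\mathfrak{c}_2 = \Span_K\{f_1,f_2,f_3,f_4\}\subseteq V$, and thus $q\in V$. Therefore $\mathfrak{a}_2 = V$ and $\Hf_{R/\mathfrak{a}}(2) = 15-9 = 6$.

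The step I expect to be the main obstacle is the equivalence in the second paragraph — in particular the bookkeeping after reducing modulo $x_5^2$ and the fact that the ``mixed'' term $a_4x_4$ lands in $\mathfrak{c}$; this is exactly where the regular-sequence hypothesis on $f_1,\dots,f_4$ enters, through the Koszul syzygies (equivalently, the nonzerodivisor property of $f_4$ modulo $(f_1,f_2,f_3)$). Everything else is a dimension count plus the standard duality identity $\dim_K(\Ann_{\bar S}\mathfrak{b})_d = \dim_K(\bar S/\mathfrak{b})_{s-d}$ for a Gorenstein Artinian algebra of socle degree $s$.
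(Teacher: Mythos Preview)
Your proof is correct, and it takes a genuinely different route from the paper's.

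Both arguments handle the easy direction identically: the nine quadrics $x_1x_5,\ldots,x_5^2,f_1,\ldots,f_4$ lie in $\mathfrak{a}_2$, giving $\Hf_{R/\mathfrak{a}}(2)\le 6$. The divergence is in showing there is no further quadric. The paper argues by contradiction: if an extra $Q\in S_2\cap\mathfrak{a}$ existed, then $H=x_5Q$ would be a cubic outside the complete intersection $(f_1,\ldots,f_4,x_5^2)$, and the almost complete intersection obtained by adjoining $H$ would violate the bound $\dim_K\bigl((\text{c.i.})_{4}\cap HR_1\bigr)\le 3$ from Corollary~\ref{cor-intersection-dim}, since $x_1H,x_2H,x_3H,x_5H$ all land in the complete intersection in degree~4. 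Thus the paper leans on Francisco's almost-complete-intersection theorem.

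You instead reduce modulo $x_5^2$ to translate $q_0\in\mathfrak{a}\cap S_2$ into the three conditions $q_0x_i\in\mathfrak{c}$ in $S$ (the key point, as you flag, being that the Koszul syzygies of the regular sequence $f_1,\ldots,f_4$ force the stray term $a_4^{(0)}x_4$ into $\mathfrak{c}$), and then finish inside the $4$-variable Gorenstein Artin ring $\bar S=S/\mathfrak{c}$ via the duality $\dim_K(\Ann_{\bar S}\mathfrak{b})_d=\dim_K(\bar S/\mathfrak{b})_{s-d}$, together with the observation that $K[x_4]/(\bar f_1,\ldots,\bar f_4)=K[x_4]/(x_4^2)$ since $\mathfrak{c}$ has height~4. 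This is more self-contained---it avoids invoking Francisco and stays entirely within the duality/Koszul toolkit already used elsewhere in the paper---at the cost of a slightly longer computation. The paper's version is shorter precisely because Corollary~\ref{cor-intersection-dim} has already been established. One minor remark: your ``converse is immediate, taking all $a_j=0$'' is a bit terse; what makes it work is that $f_4x_5=(f_4+x_4x_5)x_5-x_4x_5^2\in\mathfrak{q}$, so $\mathfrak{c}x_5\subseteq\mathfrak{q}$. But only the forward direction is actually needed.
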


\begin{proof} It suffices to show $\dim_K \mathfrak{a}_2  = 9$.

We know that $x_1x_5,\,x_2x_5,\,x_3x_5,\,x_4x_5,\,x_5^2$ are all in $\mathfrak{a}_2$, and $f_1,\,f_2,\,f_3,\,f_4 \in \mathfrak{a}_2$ as well.  Thus we see that $\dim_K \mathfrak{a}_2 \geq 9$. 

If there is another independent quadratic form in $\mathfrak{a}$, it must be in $R[\check{x}_5]$, as we have all quadratic monomials containing $x_5$, so call it $Q$ in $R[\check{x}_5]$. 
Then we consider the cubic form  $H = x_5Q.$ Clearly $H$ is not in the $R_1$-span of $f_1,\,f_2,\,f_3,\,f_4,\,x_5^2$, therefore we can define the ideal $J = (f_1,\,f_2,\,f_3,\,f_4,\,x_5^2,\,H)$, which is an almost complete intersection in $R$.
Then we get $\dim_K \big( (f_1,\,f_2,\,f_3,\,f_4, x_5^2)_4 \, \cap \, HR_1 \big) \geq 4$  as $x_1H, x_2H, x_3H$ and $x_5H$ are in $(f_1,\,f_2,\,f_3,\,f_4, x_5^2)_4 $, but by Corollary \ref{cor-intersection-dim} this dimension must be at most $3$. This proves that there cannot be such a quadratic form $Q$ in $\mathfrak{a}$.
\end{proof}

\begin{proposition} Let $I = (f_1,\,f_2,\,f_3,\,f_4+x_4x_5,\,x_5^2,\,x_1x_5,\,x_2x_5,\,x_3x_5)$ 
be a defect three quadratic ideal in $R$ where $f_1, f_2, f_3, f_4$ is an $R[\check{x}_5]$-sequence. Then
\[ \Hf_{R/I}(3) =4 = \Hf_{R/\lpp}(3)\]
where $\lpp = (x_1^2,\,\ldots,\,x_5^2,\,x_1x_2,\,x_1x_3,\,x_1x_4)$.
\end{proposition}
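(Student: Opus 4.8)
The plan is to compute $\Hf_{R/I}(3)$ by Gorenstein duality, reducing it to the colon-ideal computation carried out in the lemma immediately preceding this proposition, and then to compare with the easily computed $\Hf_{R/\lpp}(3)$. Set $\ci = (f_1,\,f_2,\,f_3,\,f_4+x_4x_5,\,x_5^2)$. Since $f_1,f_2,f_3,f_4$ is an $R[\check{x}_5]$-sequence, these five quadrics form a regular sequence in $R$, so $A=R/\ci$ is a graded complete intersection, in particular a Gorenstein Artin algebra, with Hilbert function $(1,5,10,10,5,1)$ and socle degree $5$; moreover $I = \ci + (x_1x_5,\,x_2x_5,\,x_3x_5)$.

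The first step is to observe that, because $\ci\subseteq I$, one has $\ci:I = \ci:(x_1x_5,\,x_2x_5,\,x_3x_5)$, and this is exactly the ideal $\mathfrak a$ of the preceding lemma; hence $\Hf_{R/(\ci:I)}(2) = \Hf_{R/\mathfrak a}(2) = 6$. The second step is to apply the Gorenstein duality identity used in the proof of Lemma~\ref{lemma_higher_defect}, namely $\Hf_{R/I}(d) = \Hf_{R/\ci}(d) - \Hf_{R/(\ci:I)}(5-d)$ for $0\le d\le 5$, in degree $d=3$:
\[ \Hf_{R/I}(3) \;=\; \Hf_{R/\ci}(3) - \Hf_{R/(\ci:I)}(2) \;=\; 10 - 6 \;=\; 4. \]

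Finally, on the lex-plus-powers side, a squarefree cubic monomial lies outside $\lpp = (x_1^2,\,\ldots,\,x_5^2,\,x_1x_2,\,x_1x_3,\,x_1x_4)$ precisely when it is divisible by none of $x_1x_2,\,x_1x_3,\,x_1x_4$, i.e.\ when it is a squarefree cubic in $x_2,x_3,x_4,x_5$; there are $\binom{4}{3}=4$ of these, so $\Hf_{R/\lpp}(3)=4$ (equivalently $\dim_K\lpp_3=31$, as recorded around Corollary~\ref{def3-cor3}), and the proposition follows. I do not expect any genuine obstacle: essentially all the weight of the argument has been placed in the preceding lemma's computation that $\dim_K\mathfrak a_2 = 9$, and the only points needing care are the identification $\ci:I = \mathfrak a$ and applying the duality formula with the correct socle degree $5$.
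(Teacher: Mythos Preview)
Your proof is correct and follows essentially the same approach as the paper: both apply Gorenstein duality for $A=R/\ci$ in degree $3$ to reduce to the preceding lemma's computation $\Hf_{R/\mathfrak a}(2)=6$, obtaining $\Hf_{R/I}(3)=10-6=4$. Your version is slightly more explicit in verifying $\ci:I=\mathfrak a$ and in computing $\Hf_{R/\lpp}(3)=4$ directly, but there is no substantive difference.
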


\begin{proof} Using the duality of Gorenstein algebras, again we can obtain
\[ \Hf_{R/I}(3) = \Hf_{R/(f_1,\,f_2,\,f_3,\,f_4+x_4x_5,\,x_5^2)}(3) - \Hf_{R/\mathfrak{a}}(5-3),\]
where $\mathfrak{a}$ is the colon ideal $(f_1,\,f_2,\,f_3,\,f_4+x_4x_5,\,x_5^2) : I$.

Then proof is done, since  $\Hf_{R/(f_1,\,f_2,\,f_3,\,f_4+x_4x_5,\,x_5^2)}(3)= 10$ and $\Hf_{R/\mathfrak{a}}(2)=6$ by the above lemma.
\end{proof}

%
%
%
%
%
%
%
%
%
%

\end{document}